\documentclass[11pt,leqno]{article}

\usepackage{amsmath,amsthm,amssymb}
\usepackage{authblk,subfigure,booktabs,enumerate}
\usepackage{graphicx}
\usepackage{color}
\usepackage{mathabx,bm}
\usepackage{algorithm,algpseudocode}
\usepackage{url}
\usepackage{enumitem}

\usepackage{geometry}
\algnewcommand\algorithmicinput{\textbf{Input:}}
\algnewcommand\Input{\item[\algorithmicinput]}
\algnewcommand\algorithmicoutput{\textbf{Output:}}
\algnewcommand\Output{\item[\algorithmicoutput]}

\theoremstyle{plain}
\newtheorem{thm}{Theorem}[section]
\newtheorem{prop}[thm]{Proposition}
\newtheorem{cor}[thm]{Corollary}
\newtheorem{lem}[thm]{Lemma}

\theoremstyle{remark}
\newtheorem{rem}[thm]{Remark}

\theoremstyle{definition}
\newtheorem{ex}[thm]{Example}

\makeatletter

\numberwithin{equation}{section}
\numberwithin{figure}{section}
\numberwithin{table}{section}
\numberwithin{algorithm}{section}

\newcommand{\EE}{\mathbb{E}}
\newcommand{\RR}{\mathbb{R}}
\newcommand{\PP}{\mathbb{P}}

\newcommand{\cP}{\mathcal{P}}
\newcommand{\cH}{\mathcal{H}}

\title{Kernel-based potential mean-field games \\ with unbiased random Fourier $U$-statistics}

\author[1]{Yumiharu Nakano\thanks{E-mail: nakano@comp.isct.ac.jp}}
\affil[1]{Department of Mathematical and Computing Science,
            Institute of Science Tokyo}

\date{\today}

\begin{document}

\maketitle

\begin{abstract}
We study the subclass of potential mean-field games
in which the running interaction cost and the terminal target cost are both expressed
through reproducing-kernel maximum mean discrepancy (MMD) penalties,
and develop a computational framework that exploits this kernel structure.
Both costs are estimated from finite-sample empirical distributions
using a random Fourier U-statistic representation that is unbiased
and has linear cost in the batch size.
The drift of the controlled diffusion is parametrized by a neural network
and trained via stochastic gradient descent.
For this subclass we prove a sample-level almost-sure convergence theorem
and an explicit almost-sure rate of convergence,
under coupled rate conditions on the penalty parameter, the random-feature count, the sample size,
and the optimization tolerance.
The framework includes the kernel-MMD-penalty Schr{\"o}dinger bridge problem
as the special case of a vanishing interaction cost.
Numerical experiments illustrate the method on the Schr{\"o}dinger bridge problem
in dimensions up to one hundred,
and on an electric vehicle charging coordination problem with per-vehicle physical heterogeneity,
where an aggregate-demand congestion cost represents price-feedback competition at the population level
and the terminal MMD penalty shapes the state-of-charge distribution at the deadline.

\begin{flushleft}
{\bf Key words}: mean-field games, maximum mean discrepancy, random Fourier features, U-statistics, neural SDE, Schr{\"o}dinger bridge.
\end{flushleft}
\begin{flushleft}
{\bf AMS MSC 2020}:
Primary, 93E20, 91A16; Secondary, 49N80, 60E10
\end{flushleft}
\end{abstract}


\section{Introduction}\label{sec:1}

Mean-field games (MFGs)
\cite{las-lio:2007,hua-cai-mal:2006}
model large populations of strategic agents through the interaction
of a representative agent with the population distribution;
see the monographs \cite{car-del:2018a,car-del:2018b} for a comprehensive account.
The subclass of \emph{potential} (or \emph{cooperative}) MFGs,
where the running and terminal costs admit variational potentials with respect to the population law,
reduces to a single mean-field stochastic optimal control problem
and has been used to model swarm coordination, traffic flow, opinion dynamics,
and energy systems (see, e.g., \cite{cou-per-tem-deb:2012,ma-cal-his:2013,par-gen-lyg:2020}).
A particularly important variant is the \emph{planning} (or \emph{coordination}) MFG of Lasry--Lions
and Achdou--Camilli--Capuzzo-Dolcetta \cite{ach-cam-cap:2012},
in which the terminal cost is a distributional functional of the population law at the deadline,
typically enforcing a prescribed terminal distribution $\mu_T$ as a hard constraint $\mathcal{L}(X_T)=\mu_T$.
In its potential / variational form, this problem is equivalently known as
\emph{mean-field optimal transport} (MFOT) \cite{ben-car-hat:2019,car-mes-san:2017}.
A quadratic kinetic energy with an additional running mean-field cost is minimised
over flows that transport $\mu_0$ to $\mu_T$, and the Schr{\"o}dinger bridge of \cite{nak:2024sb} arises
as its entropic specialisation with vanishing interaction.
Our framework provides a sample-level numerical scheme for the \emph{soft-target} version of MFOT,
where the hard planning constraint is relaxed by the kernel-MMD penalty $\lambda\gamma_K^2(\mathcal{L}(X_T),\mu_T)$
introduced below.

A motivating example from the smart-grid literature is coordinated charging of electric vehicle (EV) fleets \cite{cou-per-tem-deb:2012}.
A large number of EVs decide, over an operational horizon $[0,T]$,
when and how fast to charge.
Each EV faces a private charging effort and is exposed to a market electricity price
that depends on the aggregate demand of the fleet (the population distribution of charging power),
producing a congestion-type cost.
At the deadline $T$, the fleet should reach a desired distribution of state-of-charge.
For example, all vehicles charged to a sufficient level for the next day.
This is a soft target on the terminal population law
$\mathcal{L}(X_T)$ (we write $\mathcal{L}(X)$ for the probability distribution of a random variable $X$ throughout this paper),
rather than on individual trajectories.
In a representative-agent formulation,
this leads to the potential MFG
\begin{equation}\label{eq:1.mfg_intro}
 \inf_{u\in\mathcal{A}} 
 \int_0^T \left(\frac{1}{2}\EE\bigl[|u(t,X_t)|^2\bigr] + \mathcal{R}[\mathcal{L}(X_t)]\right)\,dt
 + \mathcal{T}[\mathcal{L}(X_T)],
\end{equation}
where $X$ is a controlled diffusion driven by the drift $u$,
$\mathcal{R}$ encodes the congestion cost,
and $\mathcal{T}$ encodes the deadline target.
Classical grid-based numerical methods for MFGs, e.g., HJB--Fokker--Planck coupled solvers
\cite{ach-cap:2010,ach-cam-cap:2012} and related schemes surveyed in \cite{lau:2021}, are limited to dimensions $d\le 5$ by the curse of dimensionality,
whereas deep-learning approaches based on neural backward SDEs, fictitious play, or alternating-network architectures
\cite{de-etal:2021,car-lau:2022,rut-etal:2020,lin-etal:2021apac}
have demonstrated higher-dimensional capability empirically
but typically lack rigorous convergence guarantees in the sample-based regime where they are actually trained.

In this paper, we develop a computational framework for the potential MFG \eqref{eq:1.mfg_intro}
where both the running interaction cost $\mathcal{R}$ and the terminal cost $\mathcal{T}$ are expressed
through reproducing-kernel maximum mean discrepancies (MMDs).
The per-iteration cost of the empirical objective is linear in the sample size $N$
via the random Fourier $U$-statistic representation introduced in Section~\ref{sec:3},
in contrast to the $O(N^2)$ cost of the original kernel-MMD-penalty approach of \cite{nak:2024sb}.
Specifically, we take
\begin{equation}\label{eq:1.kernel_costs}
 \mathcal{R}[\nu_t] = \frac{c}{2}\iint W(x,y)\,\nu_t(dx)\,\nu_t(dy),
 \qquad
 \mathcal{T}[\nu_T] = \lambda\gamma_K(\nu_T,\mu_T)^2,
\end{equation}
where $W$ is a non-negative symmetric kernel describing the congestion.
We consider two instantiations: a translation-invariant kernel (e.g., the Gaussian)
when an $O(NM)$ random-feature representation is desired (Section~\ref{sec:3.1interaction}),
and the rank-one aggregate-demand kernel of Section~\ref{sec:5ev} for the EV application.
The functional $\gamma_K$ is the MMD associated with a characteristic kernel $K$
that relaxes the deadline constraint $\nu_T=\mu_T$.
The kernel choice is motivated by the kernel-based Schr\"odinger bridge problem (SBP) studied by the author in~\cite{nak:2024sb},
which corresponds to the special case $\mathcal{R}\equiv 0$ of \eqref{eq:1.mfg_intro}--\eqref{eq:1.kernel_costs};
in that work the empirical kernel $U$-statistic of \eqref{eq:empirical_mmd} was used,
restricting the experiments to dimensions $d\le 2$ due to the $O(N^2)$ kernel-evaluation cost.

\begin{equation}
\label{eq:empirical_mmd}
 \bar{\gamma}_K^2 = \frac{1}{N(N-1)}\sum_i\sum_{j\neq i} K(X_i,X_j) - \frac{2}{N^2}\sum_{i,j}K(X_i,Y_j) + \frac{1}{N(N-1)}\sum_i\sum_{j\neq i}K(Y_i,Y_j).
\end{equation}

To overcome this scaling barrier, we use random Fourier features (RFF) \cite{rah-rec:2007}.
Standard RFF-based estimators of the MMD${}^2$ have been studied for hypothesis testing
\cite{zha-men:2015,sut-sch:2015,cho-kim:2024} and take the form of a $V$-statistic with positive bias of order $O(\Phi(0)/N)$. 
This bias is asymptotically negligible in testing but introduces a persistent distortion at the population level
that prevents a clean limit-theorem analysis as the MMD penalty weight grows.
We instead construct an \emph{unbiased} $O(NM)$ estimator of $\gamma_K^2$
by combining the Fourier representation
\[
 \gamma_K(\mu,\nu)^2 = \Phi(0)\int |\tilde{\mu}(\xi)-\tilde{\nu}(\xi)|^2\rho(\xi)\,d\xi
\]
with a $U$-statistic decomposition that eliminates the diagonal bias in $O(N)$ per frequency.
The same construction applies to the congestion energy $\mathcal{R}[\nu_t]$
when the kernel $W$ is translation-invariant with positive Fourier transform,
yielding an unbiased $O(NM)$ estimator of the interaction cost (Theorem~\ref{thm:rfu_interaction}).
For congestion energies of aggregate-demand type used in Section~\ref{sec:5ev},
a simpler unbiased $O(N)$ $U$-statistic is available without random features
(see \eqref{eq:5ev.Rhat}).
The drift $u$ is parametrized by a neural network and trained by stochastic gradient descent
on the empirical version of \eqref{eq:1.mfg_intro};
the empirical interaction cost $\hat{\mathcal{R}}[\hat\nu_t^N]$ and the empirical terminal penalty
$\lambda\hat\gamma_{M,N}^2(\hat\nu_T^N,\mu_T)$ are evaluated on the current minibatch
at $O(NM)$ cost (or $O(N)$ for the aggregate-demand congestion).

\paragraph{Relation to existing computational MFG methods.}
The HJB--Fokker--Planck coupled approach of Achdou and collaborators \cite{ach-cap:2010,ach-cam-cap:2012}
provides rigorous numerical schemes for MFGs but is restricted to dimensions $d\le 5$ by the curse of dimensionality.
Variational primal methods of generalised conditional-gradient (Frank--Wolfe) type for potential MFGs
have been developed by Nakamura and Saito \cite{nakam-sai:2026loc,nakam-sai:2026disc},
with explicit non-asymptotic convergence rates established at the fully discretised level on grids;
these analyses cover local-coupling running costs but are tied to finite-difference discretisations.
Deep-learning MFG methods have addressed the high-dimensional regime through several routes:
the alternating population--control networks of \cite{lin-etal:2021apac} (APAC-Net) and the related framework of \cite{rut-etal:2020},
fictitious-play and policy-iteration variants surveyed in \cite{car-lau:2022,car-lau:2021},
and bridge-matching or diffusion-Schr{\"o}dinger-bridge methods
\cite{de-etal:2021,var-etal:2021,shi-etal:2023dsbm,Tong2024}
that solve the $c=0$ specialization of \eqref{eq:1.mfg_intro}.
These methods have demonstrated empirical scalability,
but typically rely on training-objective heuristics or auxiliary networks
and lack sample-level convergence guarantees in the regime in which they are actually trained.
Our method takes a structurally simpler route within the kernel-MMD terminal-penalty subclass:
a single end-to-end optimization with one drift network and a closed-form unbiased $O(NM)$ estimator of the terminal MMD penalty,
together with a flexible empirical estimator of the running interaction cost
(unbiased $O(NM)$ for kernel-MMD interactions and unbiased $O(N)$ for aggregate-demand costs),
avoiding both the alternating optimization between population and control networks and the auxiliary score networks of bridge-matching schemes.
This structural simplicity is what enables the sample-level almost-sure convergence theorem and the explicit almost-sure rate of Section~\ref{sec:4}.
We illustrate the method at dimensions up to $d=100$ for the special case $\mathcal{R}\equiv 0$ (Schr{\"o}dinger bridge),
and on an electric-vehicle charging coordination problem
with a physical per-vehicle heterogeneity, where the soft MMD target shapes the deadline state-of-charge distribution
and the aggregate-demand running cost reduces peak and time-averaged grid load.

\paragraph{Scope.}
Our framework targets the soft-target mean-field optimal transport problem
in which the terminal cost is the squared MMD against a prescribed distribution $\mu_T$.
This structure makes the soft deadline penalty $\lambda\gamma_K^2(\nu_T,\mu_T)$ available
as a relaxation of the planning constraint $\nu_T=\mu_T$,
and enables the unbiased $O(NM)$ random-feature representation of $\gamma_K^2$ that underlies the convergence guarantees of Section~\ref{sec:4}.
The running mean-field cost $\mathcal{R}[\nu_t]$ is more flexible. 
The convergence theorem only requires $\mathcal{R}$ to be non-negative, weakly continuous on $\cP(\RR^d)$,
and accompanied by a strongly consistent empirical estimator (Proposition~\ref{prop:convergence_general}).
This covers both kernel-MMD interactions for which we give an unbiased $O(NM)$ random-feature estimator in Section~\ref{sec:3.1interaction}
and aggregate-demand costs of the form $D[\nu]^2$ used in the EV application of Section~\ref{sec:5ev}. 
Extending to weakly lower-semicontinuous functionals such as local-density costs $\int F(\rho(x))\,dx$
is a natural next step.

\paragraph{Contributions.}
\begin{enumerate}[label=\textup{(\roman*)}]
\item We derive a Fourier representation of $\gamma_K^2$ and of the kernel self-interaction $\iint W(x,y)\nu(dx)\nu(dy)$,
and construct unbiased $U$-statistic estimators
$\hat\gamma_{M,N}^2$ for $\gamma_K^2$ (Theorem~\ref{thm:rfu_unbiased})
and $\hat{\mathcal{R}}_{M,N}[\hat\nu^N]$ for $\mathcal{R}[\nu]$ (Theorem~\ref{thm:rfu_interaction}),
each of complexity $O(NM)$ and admitting an explicit variance decomposition $O(1/M)+O(1/N)$.
\item We formulate the empirical kernel-MMD soft-target MFOT problem (Section~\ref{sec:4.1mfg})
and establish a sample-level almost-sure convergence theorem for it
(Theorem~\ref{thm:convergence}), with the Schr{\"o}dinger-bridge specialization
recovered as Corollary~\ref{cor:sbp_special_case}.
Both the energy and the kernel costs are estimated from finite samples,
and the sequences $\lambda_n,M_n,N_n,\varepsilon_n$ are coupled via the explicit rate conditions
$\lambda_n\to\infty$, $M_n/\log n\to\infty$, $N_n\to\infty$, $\varepsilon_n\to 0$, $\lambda_n\varepsilon_n\to 0$.
Under a uniform bound on the drift, we further obtain an explicit almost-sure rate
of the form $\le(\text{constant}+\varepsilon_n)/\lambda_n+C^*(\sqrt{\log n/M_n}+\sqrt{\log n/N_n})$
holding for all sufficiently large $n$,
for the terminal discrepancy $\gamma_K(\mathcal{L}(X_T^{(n)}),\mu_T)^2$,
yielding an algebraic rate $\gamma_K=O(n^{-a/2}(\log n)^{1/4})$
under polynomial schedules with $\lambda_n=n^a$ and $M_n=N_n=n^{2a}$
(Proposition~\ref{prop:rate} for the SBP case and Proposition~\ref{prop:rate_mfg} for the MFG case).
\item The Schr{\"o}dinger bridge problem of~\cite{nak:2024sb} is recovered as the special case $\mathcal{R}\equiv 0$
(Corollary~\ref{cor:sbp_special_case}),
and taking the $M=\infty$ limit yields a sample-level convergence theorem for the kernel-MMD penalty method
of~\cite{nak:2024sb} (Corollary~\ref{cor:kernel_convergence}).
To the author's knowledge, this is the first sample-level convergence guarantee
for the empirical implementation of that method.
The convergence theorem itself extends beyond the kernel-MMD structure of the running cost
to any non-negative weakly continuous functional with a consistent empirical estimator
(Proposition~\ref{prop:convergence_general}),
which we use in Section~\ref{sec:5ev} to handle an aggregate-demand congestion cost
of the form $D[\nu]^2=(\int u^{\!*}(s)\nu(ds))^2$.
\item Numerical experiments cover the Schr{\"o}dinger bridge specialization in dimensions up to $d=100$
(Gaussian shift and bimodal targets; Sections~\ref{sec:5.2}--\ref{sec:5.5comp})
and the EV charging fleet potential MFG with a physical per-vehicle heterogeneity
(log charging-speed multiplier; Section~\ref{sec:5ev}),
demonstrating that the framework simultaneously recovers the target SOC distribution
under the kernel terminal penalty and reduces the peak and time-averaged aggregate fleet demand
under the running interaction cost.
\end{enumerate}

The remainder of this paper is organized as follows.
Section~\ref{sec:2} recalls basic facts on the MMD and derives the Fourier representation
used by both the terminal MMD penalty and the kernel congestion cost.
Section~\ref{sec:3} presents the proposed unbiased $O(NM)$ $U$-statistic estimators
and establishes their unbiasedness, complexity, and variance properties.
Section~\ref{sec:4} formulates the empirical potential MFG problem
and states the sample-level convergence theorem and the explicit almost-sure rate,
together with the Schr{\"o}dinger-bridge specialization.
Section~\ref{sec:5} presents numerical experiments,
beginning with the Schr{\"o}dinger-bridge special case as a warm-up
and proceeding to the EV charging fleet coordination problem.
Section~\ref{sec:6} gives proofs of the main results.

\paragraph{Notation.}
Throughout the paper, $d\ge 1$ is a positive integer,
and $|\cdot|$ denotes the Euclidean norm on $\RR^d$. 
For any column $x\in\RR^d$, we write $x^{\mathsf{T}}$ for the transposition of $x$ 
and $x_k$ for the $k$-th component of $x$, $k=1,\ldots,d$.
For an $\RR^d$-valued random variable $X$, $\EE[X]_k$ denotes the $k$-th component
of its expectation vector $\EE[X]\in\RR^d$.
The set of Borel probability measures on $\RR^d$ is denoted by $\cP(\RR^d)$,
and we write $\mathcal{L}(X)\in\cP(\RR^d)$ for the distribution of an $\RR^d$-valued random variable $X$.
For a topological space $E$, $C_b(E)$ denotes the space of bounded continuous real-valued functions on $E$.
For an integrable function $\Phi:\RR^d\to\RR$,
$\widehat{\Phi}$ denotes its Fourier transform with the convention
$\widehat{\Phi}(\xi)=(2\pi)^{-d/2}\int_{\RR^d}e^{-\mathrm{i}\xi^{\mathsf{T}}x}\Phi(x)\,dx$,
where $\mathrm{i}=\sqrt{-1}$.
$I_k$ denotes the $k\times k$ identity matrix,
and we write $\mathcal{N}(m,\Sigma)$ for the Gaussian distribution on $\RR^k$ with mean $m\in\RR^k$ and covariance matrix $\Sigma$.
$\EE[\cdot]$ and $\mathrm{Var}(\cdot)$ stand for expectation and variance, respectively. 
For probability measures $Q$, $P$ on a common measurable space,
$H(Q\,|\,P):=\int\log(dQ/dP)\,dQ$ when $Q\ll P$, and $H(Q\,|\,P):=+\infty$ otherwise,
is the Kullback--Leibler divergence (relative entropy) of $Q$ with respect to $P$.
Finally, $C([0,1];\RR^d)$ denotes the space of continuous paths $[0,1]\to\RR^d$
endowed with the supremum norm.

\section{Hilbert space embeddings and Fourier representation of MMD}\label{sec:2}

\subsection{Maximum mean discrepancy}\label{sec:2.1}

Let $K\in C_b(\RR^d\times\RR^d)$ be a symmetric and strictly positive definite kernel on $\RR^d$, i.e.,
$K(x,y)=K(y,x)$ for $x,y\in\RR^d$ and for any pairwise distinct $x_1,\ldots,x_L\in \RR^d$ and
$\alpha=(\alpha_1,\ldots,\alpha_L)^{\mathsf{T}}\in\RR^L\setminus\{0\}$,
\[
 \sum_{j,\ell=1}^L\alpha_j\alpha_{\ell}K(x_j,x_{\ell})>0.
\]
Then there exists a unique Hilbert space $\cH\subset C_b(\RR^d)$ such that $K$ is a reproducing kernel on
$\cH$ with norm $\|\cdot\|$ (see, e.g., Wendland \cite{wen:2005});
the inclusion $\cH\subset C_b(\RR^d)$ follows from the reproducing property
$|f(x)|=|\langle f,K(\cdot,x)\rangle|\le \|f\|\sqrt{K(x,x)}\le \|f\|\sup_z\sqrt{K(z,z)}$,
which is finite since $K\in C_b(\RR^d\times\RR^d)$.
The \textit{maximum mean discrepancy} (MMD) associated with $K$ is defined by
\begin{equation}
\label{eq:2.1}
 \gamma_K(\mu,\nu): = \sup_{f\in\cH, \, \|f\|\le 1}\left|\int_{\RR^d} f d\mu - \int_{\RR^d} f d\nu\right|,
 \quad \mu,\nu\in\cP(\RR^d)
\end{equation}
(see Gretton et al.~\cite{gre-etal:2006}).
We assume that $\gamma_K$ defines a metric on $\cP(\RR^d)$, i.e., $K$ is a \textit{characteristic kernel}.
By the reproducing property,
\begin{equation}
\label{eq:2.2}
 \gamma_K(\mu,\nu)^2 = \iint\nolimits_{\RR^d\times\RR^d}K(x,y)(\mu-\nu)(dx)(\mu-\nu)(dy)
\end{equation}
(see Section 2 in Sriperumbudur et al.~\cite{sri-etal:2010}).

\subsection{Fourier representation}\label{sec:2.2}

We now derive the Fourier representation that is central to our approach.
Consider the case where
\begin{enumerate}
\item[(A)] $K$ is represented as $K(x,y)=\Phi(x-y)$, $x,y\in\RR^d$, for some bounded, continuous and integrable function $\Phi$ on $\RR^d$ such that
 its Fourier transform $\widehat{\Phi}$ is also integrable on $\RR^d$ and satisfies $\widehat{\Phi}(\xi)>0$ for any $\xi\in\RR^d$.
\end{enumerate}
Under (A), the kernel $K$ is in $C_b(\RR^d\times\RR^d)$ that is symmetric and strictly positive definite.  
Moreover, the Fourier inversion formula gives
\begin{equation}
\label{eq:2.3}
\Phi(x) = \int_{\RR^d}\rho(\xi)e^{\mathrm{i}\xi^{\mathsf{T}}x}d\xi, \quad x\in\RR^d,
\end{equation}
where $\rho(\xi)=(2\pi)^{-d/2}\widehat{\Phi}(\xi)>0$ for $\xi\in\RR^d$.
Substituting \eqref{eq:2.3} into \eqref{eq:2.2} and applying Fubini's theorem, we obtain
\begin{equation}
\label{eq:2.4}
\gamma_K(\mu,\nu)^2 = \int_{\RR^d}|\tilde{\mu}(\xi) - \tilde{\nu}(\xi)|^2 \rho(\xi) d\xi,
\end{equation}
where $\tilde{\mu}(\xi):=\int_{\RR^d}e^{\mathrm{i}\xi^{\mathsf{T}}x}\mu(dx)$ denotes the characteristic function of $\mu$.

Note that $\rho$ integrates to $\Phi(0)=K(0,0)$.
Let $X\sim\mu$, $Y\sim \nu$, $X^{\prime}\sim\mu$, and $Y^{\prime}\sim\nu$, with
$X,X^{\prime},Y,Y^{\prime}$ mutually independent.
Then, since $|\tilde{\mu}(\xi) - \tilde{\nu}(\xi)|^2$ is real-valued,
\begin{align}
|\tilde{\mu}(\xi) - \tilde{\nu}(\xi)|^2
&= \EE[e^{\mathrm{i}\xi^{\mathsf{T}}X} - e^{\mathrm{i}\xi^{\mathsf{T}}Y}]\overline{\EE[e^{\mathrm{i}\xi^{\mathsf{T}}X} - e^{\mathrm{i}\xi^{\mathsf{T}}Y}]} \notag \\
&= \EE\left[e^{\mathrm{i}\xi^{\mathsf{T}}(X-X^{\prime})} - e^{\mathrm{i}\xi^{\mathsf{T}}(X-Y^{\prime})} - e^{\mathrm{i}\xi^{\mathsf{T}}(Y-X^{\prime})} + e^{\mathrm{i}\xi^{\mathsf{T}}(Y-Y^{\prime})}\right] \notag \\
\label{eq:2.5}
&= \EE[\cos(\xi^{\mathsf{T}}(X-X^{\prime}))] - 2\EE[\cos(\xi^{\mathsf{T}}(X-Y))] + \EE[\cos(\xi^{\mathsf{T}}(Y-Y^{\prime}))],
\end{align}
where in the last step we used the fact that $(X,Y')$ and $(X,Y)$ have the same joint distribution $\mu\otimes\nu$,
and similarly $\EE[\cos(\xi^{\mathsf{T}}(Y-X'))] = \EE[\cos(\xi^{\mathsf{T}}(X-Y))]$ by the symmetry of $\cos$.
Now, let $Z$ be a random variable with density $\rho/\Phi(0)$, independent of $(X,X',Y,Y')$.
Integrating \eqref{eq:2.5} against $\rho(\xi)d\xi$ and using \eqref{eq:2.4}, we arrive at the representation
\begin{equation}
\label{eq:repre}
 \gamma_K(\mu,\nu)^2 = \Phi(0)\,\EE\left[\cos(Z^{\mathsf{T}}(X-X^{\prime})) - 2 \cos(Z^{\mathsf{T}}(X-Y)) + \cos(Z^{\mathsf{T}}(Y-Y^{\prime})) \right].
\end{equation}

\begin{rem}\label{rem:2.1}
The representation \eqref{eq:repre} also shows that $\gamma_K$ metrizes the weak topology on $\cP(\RR^d)$ under $(A)$.
See \cite[Proposition 2.1]{nak:2024sb} for a proof.
\end{rem}

\begin{ex}\label{ex:2.2}
The Gaussian kernel $K(x,y)=e^{-\alpha|x-y|^2}$, $\alpha>0$, satisfies assumption $(A)$. 
Indeed, $K(x,y)=\Phi(x-y)$ with $\Phi(z)=e^{-\alpha|z|^2}$ bounded, continuous, and integrable on $\RR^d$,
and its inverse Fourier transform
$\rho(\xi)=(2\pi)^{-d/2}\widehat\Phi(\xi)=(4\alpha)^{-d/2}e^{-|\xi|^2/(4\alpha)}>0$
is the density of $\mathcal{N}(0,2\alpha I_d)/\Phi(0)$ up to the normalising constant $\Phi(0)=1$.
Hence $\Phi(0)=1$ and the random frequency in \eqref{eq:repre} satisfies $Z\sim \mathcal{N}(0,2\alpha I_d)$.
\end{ex}

\section{Unbiased estimator via random Fourier $U$-statistics}\label{sec:3}

Throughout the remainder of the paper we fix a complete probability space $(\Omega,\mathcal{F},\PP)$
on which all random variables are defined,
and we denote by $\EE$ the corresponding expectation.

\subsection{Construction of the estimator}\label{sec:3.1}

The representation \eqref{eq:repre} suggests a Monte Carlo approach:
draw $Z_1,\ldots,Z_M$ i.i.d.\ from the density $\rho/\Phi(0)$ and approximate $\gamma_K^2$ by averaging over the random frequencies.
Given i.i.d.\ samples $X_1,\ldots,X_N\sim\mu$ and $Y_1,\ldots,Y_N\sim\nu$,
we define for each $z\in\RR^d$ the following quantities:
\begin{equation}
\label{eq:3.1}
 S_c^X(z) := \sum_{i=1}^N\cos(z^{\mathsf{T}}X_i), \quad
 S_s^X(z) := \sum_{i=1}^N\sin(z^{\mathsf{T}}X_i),
\end{equation}
and similarly $S_c^Y(z)$, $S_s^Y(z)$ for the $Y$-samples.

The key observation is that the $U$-statistic for $\EE[\cos(z^{\mathsf{T}}(X-X'))]$ can be computed in $O(N)$:
\begin{equation}
\label{eq:3.2}
 U_{XX}(z) := \frac{(S_c^X(z))^2 + (S_s^X(z))^2 - N}{N(N-1)}.
\end{equation}
Indeed, expanding the squares and using the trigonometric identity $\cos(a-b) = \cos a\cos b + \sin a\sin b$, we get 
\begin{align*}
 (S_c^X)^2 + (S_s^X)^2
 &= \sum_{i,j=1}^N (\cos(z^{\mathsf{T}}X_i)\cos(z^{\mathsf{T}}X_j) + \sin(z^{\mathsf{T}}X_i)\sin(z^{\mathsf{T}}X_j)) \\
 &= \sum_{i,j=1}^N \cos(z^{\mathsf{T}}(X_i - X_j)) = N + \sum_{i\neq j}\cos(z^{\mathsf{T}}(X_i - X_j)).
\end{align*}
Dividing by $N(N-1)$ yields an unbiased estimator of $\EE[\cos(z^{\mathsf{T}}(X-X'))]$.

Similarly, the cross term $\EE[\cos(z^{\mathsf{T}}(X-Y))]$ is estimated by
\begin{equation}
\label{eq:3.3}
 V_{XY}(z) := \frac{S_c^X(z)\,S_c^Y(z) + S_s^X(z)\,S_s^Y(z)}{N^2},
\end{equation}
which is unbiased since $X_i$ and $Y_j$ are always from independent samples.

Combining the representation \eqref{eq:repre} with the Monte Carlo approach,
we propose the \emph{RF $U$-statistic estimator}: given i.i.d.\ random frequencies
$Z_1,\ldots,Z_M\sim\rho/\Phi(0)$ independent of the data, set
\begin{equation}
\label{eq:3.4}
 \hat{\gamma}_{M,N}^2 := \frac{\Phi(0)}{M}\sum_{r=1}^M\left[U_{XX}(Z_r) - 2V_{XY}(Z_r) + U_{YY}(Z_r)\right].
\end{equation}

The following theorem collects the basic properties of our penalty term. 
The proof is given in Section~\ref{sec:6.1}.

\begin{thm}[Unbiasedness, complexity, and variance]\label{thm:rfu_unbiased}
Suppose $(A)$ holds.
Let $X_1,\ldots,X_N$ with $N\ge 2$, $Y_1,\ldots,Y_N$, and $Z_1,\ldots,Z_M$ with $M\ge 1$
be three mutually independent collections of i.i.d.\ random variables
with $X_i\sim\mu$, $Y_j\sim\nu$, and $Z_r\sim\rho/\Phi(0)$.
Then the following hold.
\begin{enumerate}[label=\textup{(\roman*)}]
\item $\EE[\hat{\gamma}_{M,N}^2] = \gamma_K(\mu,\nu)^2$.
\item The computational complexity of $\hat{\gamma}_{M,N}^2$ is $O(NM)$.
\item $\mathrm{Var}(\hat{\gamma}_{M,N}^2) = O(1/M) + O(1/N)$.
\end{enumerate}
\end{thm}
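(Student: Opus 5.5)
Three parts, in order: unbiasedness by conditioning on the frequencies, complexity by counting operations, and variance by the law of total variance.

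\emph{Unbiasedness.} Write $\hat{\gamma}_{M,N}^2=\frac{\Phi(0)}{M}\sum_{r}g(Z_r)$ with $g(z):=U_{XX}(z)-2V_{XY}(z)+U_{YY}(z)$. Condition on $Z_r=z$; this is allowed because the frequencies are independent of the data. The expansion after \eqref{eq:3.2} gives $U_{XX}(z)=\frac{1}{N(N-1)}\sum_{i\neq j}\cos(z^{\mathsf{T}}(X_i-X_j))$. Since $X_i,X_j$ are i.i.d.\ for $i\neq j$, its conditional mean is $\EE[\cos(z^{\mathsf{T}}(X-X'))]$, and $U_{YY}$ is handled the same way. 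By the identity $\cos(a-b)=\cos a\cos b+\sin a\sin b$, $V_{XY}(z)=N^{-2}\sum_{i,j}\cos(z^{\mathsf{T}}(X_i-Y_j))$, whose conditional mean is $\EE[\cos(z^{\mathsf{T}}(X-Y))]$ by independence of the samples. Thus $\EE[g(z)]$ equals the right-hand side of \eqref{eq:2.5}, namely $|\tilde\mu(z)-\tilde\nu(z)|^2$. Integrating against $\rho/\Phi(0)$ and using \eqref{eq:2.4} (equivalently \eqref{eq:repre}) gives (i). All integrands are bounded by constants, so Fubini applies.

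\emph{Complexity.} For each $r$, forming the $N$ inner products $Z_r^{\mathsf{T}}X_i$ and the four sums $S_c^X,S_s^X,S_c^Y,S_s^Y$ costs $O(N)$ operations, treating $d$ as fixed or counting $O(Nd)$. After that, $U_{XX}$, $V_{XY}$ and $U_{YY}$ each cost $O(1)$. Summing over $r=1,\dots,M$ gives $O(NM)$, which is (ii).

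\emph{Variance.} Let $\mathcal{D}=(X_i,Y_j)$ denote the data. The law of total variance gives
\[
\mathrm{Var}(\hat\gamma^2_{M,N})=\EE\bigl[\mathrm{Var}(\hat\gamma^2_{M,N}\mid\mathcal{D})\bigr]+\mathrm{Var}\bigl(\EE[\hat\gamma^2_{M,N}\mid\mathcal{D}]\bigr).
\]
Given $\mathcal{D}$, the summands $g(Z_r)$ are i.i.d. Moreover $|U_{XX}|,|U_{YY}|,|V_{XY}|\le 1$, since each is an average of cosines. Hence $|g|\le 4$, and the first term is at most $16\Phi(0)^2/M$. For the second term, $\EE[\hat\gamma^2\mid\mathcal{D}]=\Phi(0)\,\EE_Z[g(Z)]$, and averaging the cosine kernels over $Z$ turns them back into $\Phi$. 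So this quantity equals the kernel statistic $\frac{1}{N(N-1)}\sum_{i\ne j}\Phi(X_i-X_j)-\frac{2}{N^2}\sum_{i,j}\Phi(X_i-Y_j)+\frac{1}{N(N-1)}\sum_{i\ne j}\Phi(Y_i-Y_j)$, i.e.\ the statistic \eqref{eq:empirical_mmd} with $K=\Phi(\cdot-\cdot)$.

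The second term is the step I expect to need care; I will show this kernel statistic has variance $O(1/N)$ with a bounded kernel, by one of two routes. The first route is classical Hoeffding theory for one- and two-sample $U$-statistics: the variance of a second-order $U$-statistic is $\frac{4(N-2)}{N(N-1)}\zeta_1+\frac{2}{N(N-1)}\zeta_2\le C/N$, where $\zeta_1,\zeta_2\le\Phi(0)^2$. The cross term $N^{-2}\sum_{i,j}\Phi(X_i-Y_j)$ is a two-sample $V$-type average with variance at most $\frac{1}{N^2}\bigl(N^2\cdot 0+2N^3\Phi(0)^2/N^2\bigr)$-type counts. Concretely, the pair of index pairs $(i,j),(k,l)$ contributes a nonzero covariance only if the pairs share an index. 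There are $O(N^3)$ such pairs, each with covariance bounded by $\Phi(0)^2$, giving $O(1/N)$. The second route, which is simpler and which I will use, is to bound each of the three pieces directly by this covariance-counting argument: for $i\ne j$ and $k\ne l$, $\mathrm{Cov}(\Phi(X_i-X_j),\Phi(X_k-X_l))=0$ unless $\{i,j\}\cap\{k,l\}\neq\emptyset$, and there are only $O(N^3)$ such quadruples. Finally, $\mathrm{Var}(A-2B+C)\le 3(\mathrm{Var}A+4\mathrm{Var}B+\mathrm{Var}C)$ combines the pieces, giving $O(1/N)$ and hence (iii).
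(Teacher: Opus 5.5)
Your proposal is correct and follows the paper's proof essentially step for step. For (i), you condition on the frequencies and invoke \eqref{eq:repre}. For (ii), you count $O(N)$ operations per frequency. For (iii), you apply the law of total variance: the bounded summands give the $O(1/M)$ term, and the conditional mean reduces to the kernel $U$-statistic $\bar{\gamma}_K^2(D)$, which gives the $O(1/N)$ term. The only difference is cosmetic: you bound $\mathrm{Var}(\bar{\gamma}_K^2(D))$ by direct covariance counting together with $(a+b+c)^2\le 3(a^2+b^2+c^2)$, whereas the paper uses Hoeffding's variance formula for $T_{XX},T_{YY}$, a conditional-variance split for $T_{XY}$, and the triangle inequality in $L^2$.
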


\begin{rem}\label{rem:3.2}
The standard RFF approach computes a $V$-statistic
$|\frac{1}{N}\sum_i \phi(X_i) - \frac{1}{N}\sum_j \phi(Y_j)|^2$ with positive bias of order $\Phi(0)/N$ when $\mu=\nu$.
Our $\hat{\gamma}_{M,N}^2$ eliminates this bias while preserving the $O(NM)$ complexity,
which avoids a non-vanishing penalty floor at the population level
and yields the clean variance decomposition of Theorem~\ref{thm:rfu_unbiased}(iii)
and the explicit a.s.\ rate of Proposition~\ref{prop:rate} below
without any bias-correction term.
Empirically the bias is mild near the optimum. 
Section~\ref{sec:5.1.3vs} shows that the trained drifts from the biased $V$-statistic and the unbiased $U$-statistic
are indistinguishable across the tested range of $(N,\lambda)$.
The unbiased construction is thus preferred for theoretical reasons rather than for an immediate empirical accuracy gain.
\end{rem}

\subsection{Estimator for the kernel interaction cost}\label{sec:3.1interaction}

The same Fourier $U$-statistic construction extends to the running interaction cost $\mathcal{R}[\nu]$ defined by \eqref{eq:4.cost_decomp}.
Suppose $(A)$ holds for the congestion kernel $W(x,y) = \Psi(x-y)$,
with generating density $\rho_W(\xi):=(2\pi)^{-d/2}\widehat{\Psi}(\xi)>0$.
The Fourier inversion formula and Fubini's theorem (cf.\ Section~\ref{sec:2.2}) then give
\begin{equation}\label{eq:repre_R}
 \iint\nolimits_{\RR^d\times\RR^d} W(x,y)\,\nu(dx)\nu(dy)
 = \Psi(0)\,\EE\bigl[\cos(\tilde Z^{\mathsf{T}}(X-X'))\bigr],
\end{equation}
where $\tilde Z\sim \rho_W/\Psi(0)$ and $X,X'$ is an independent pair of samples from $\nu$,
all mutually independent.
Given i.i.d.\ samples $X_1,\ldots,X_N\sim\nu$ ($N\ge 2$)
and i.i.d.\ random frequencies $\tilde Z_1,\ldots,\tilde Z_M\sim\rho_W/\Psi(0)$ ($M\ge 1$),
independent of the data, define
\begin{equation}\label{eq:3.5}
 \hat{\mathcal{R}}_{M,N}[\hat\nu^N]
 := \frac{\Psi(0)}{2M}\sum_{r=1}^M U_{XX}(\tilde Z_r)
 = \frac{\Psi(0)}{2M}\sum_{r=1}^M\frac{S_c^X(\tilde Z_r)^2+S_s^X(\tilde Z_r)^2-N}{N(N-1)},
\end{equation}
where $U_{XX}$, $S_c^X$, $S_s^X$ are as in \eqref{eq:3.1}--\eqref{eq:3.2}
evaluated at the single sample collection $\{X_i\}_{i=1}^N$.
The estimator \eqref{eq:3.5} is the natural analogue of $\hat\gamma_{M,N}^2$ for the
self-interaction kernel functional $\frac{1}{2}\iint W\,d\nu\otimes d\nu$:
it removes the diagonal bias of the $V$-statistic via the same trigonometric identity
and is also evaluable in $O(NM)$.

\begin{thm}[Properties of the interaction estimator]\label{thm:rfu_interaction}
Suppose $(A)$ holds for $W$.
Let $X_1,\ldots,X_N$ with $N\ge 2$ and $\tilde Z_1,\ldots,\tilde Z_M$ with $M\ge 1$
be two mutually independent collections of i.i.d.\ random variables
with $X_i\sim\nu$ and $\tilde Z_r\sim\rho_W/\Psi(0)$. Then
\begin{enumerate}[label=\textup{(\roman*)}]
\item $\EE\bigl[\hat{\mathcal{R}}_{M,N}[\hat\nu^N]\bigr] = \mathcal{R}[\nu]$.
\item The computational complexity of $\hat{\mathcal{R}}_{M,N}[\hat\nu^N]$ is $O(NM)$.
\item $\mathrm{Var}\bigl(\hat{\mathcal{R}}_{M,N}[\hat\nu^N]\bigr) = O(1/M) + O(1/N)$.
\end{enumerate}
\end{thm}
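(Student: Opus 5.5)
The plan mirrors the proof of Theorem~\ref{thm:rfu_unbiased}. The only new ingredient is the representation \eqref{eq:repre_R}, which comes from the Fourier inversion $\Psi(x)=\int\rho_W(\xi)e^{\mathrm{i}\xi^{\mathsf{T}}x}d\xi$ and Fubini, exactly as in Section~\ref{sec:2.2}.

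For (i), I condition on the frequencies $\tilde Z_1,\ldots,\tilde Z_M$. The trigonometric identity below \eqref{eq:3.2} gives
\[
U_{XX}(z)=\frac{1}{N(N-1)}\sum_{i\neq j}\cos(z^{\mathsf{T}}(X_i-X_j)),
\]
so by independence of the data and the frequencies, $\EE[U_{XX}(\tilde Z_r)\mid\tilde Z_r]=g(\tilde Z_r)$, where $g(z):=\EE[\cos(z^{\mathsf{T}}(X-X'))]=|\tilde\nu(z)|^2$. Taking expectations and using \eqref{eq:repre_R} yields
\[
\EE\bigl[\hat{\mathcal R}_{M,N}\bigr]=\frac{\Psi(0)}{2}\,\EE[g(\tilde Z)]=\frac12\iint W\,d\nu\otimes d\nu=\mathcal R[\nu].
\]

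For (ii), for each of the $M$ frequencies we compute the $N$ inner products $\tilde Z_r^{\mathsf{T}}X_i$ and the sums $S_c^X,S_s^X$ in $O(N)$ operations, treating $d$ as fixed (or counting $O(dNM)$ if $d$ is tracked). The final combination costs $O(M)$, so the total is $O(NM)$.

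For (iii), I use the law of total variance with respect to $\mathcal Z:=\sigma(\tilde Z_1,\ldots,\tilde Z_M)$. The first term is
\[
\mathrm{Var}\bigl(\EE[\hat{\mathcal R}\mid\mathcal Z]\bigr)=\frac{\Psi(0)^2}{4M}\mathrm{Var}(g(\tilde Z))\le\frac{\Psi(0)^2}{4M},
\]
since $0\le g\le1$ and the $\tilde Z_r$ are i.i.d. For the second term, conditionally on $\mathcal Z$ the estimator is $\frac{\Psi(0)}{2}$ times a $U$-statistic of order two in $X_1,\ldots,X_N$ with bounded kernel
\[
h(x,y)=\frac1M\sum_r\cos(\tilde Z_r^{\mathsf{T}}(x-y)),\qquad |h|\le1.
\]
The Hoeffding variance formula for $U$-statistics gives
\[
\mathrm{Var}(U\mid\mathcal Z)=\frac{4(N-2)}{N(N-1)}\zeta_1+\frac{2}{N(N-1)}\zeta_2\le\frac{4}{N}+\frac{2}{N(N-1)},
\]
because $\zeta_1,\zeta_2\le\|h\|_\infty^2\le1$. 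Taking expectations over $\mathcal Z$ bounds this term by $\Psi(0)^2\bigl(1/N+1/(2N(N-1))\bigr)=O(1/N)$. Adding the two pieces gives $O(1/M)+O(1/N)$, with constants depending only on $\Psi(0)$.

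The main point needing care is (iii). There I must verify that the conditional Hoeffding bound holds uniformly in the realized frequencies, which follows from the uniform bound $|h|\le1$. I also need measurability of the conditioning, which is routine since $h$ is jointly continuous.
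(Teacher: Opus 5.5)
Your proof is correct. Parts (i) and (ii) match the paper. The difference is in (iii): you apply the law of total variance conditioning on the frequencies $\mathcal{Z}$, whereas the paper conditions on the data $D_X=(X_1,\ldots,X_N)$.

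The two decompositions swap which term carries which rate.
\begin{itemize}
\item In the paper, $\EE[\mathrm{Var}(\hat{\mathcal{R}}_{M,N}\mid D_X)]\le\Psi(0)^2/(4M)$ is the random-feature term. The other term, $\mathrm{Var}(\EE[\hat{\mathcal{R}}_{M,N}\mid D_X])=\mathrm{Var}(\bar{\mathcal{R}}_W(D_X))\le\Psi(0)^2/N$, is the variance of the kernel $U$-statistic with kernel $\tfrac12 W$.
\item In yours, $\mathrm{Var}(\EE[\hat{\mathcal{R}}_{M,N}\mid\mathcal{Z}])=\frac{\Psi(0)^2}{4M}\mathrm{Var}(|\tilde\nu(\tilde Z)|^2)$ is a purely population-level random-feature term. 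The $O(1/N)$ term comes from the fact that, for every realization of the frequencies, the estimator is exactly $\frac{\Psi(0)}{2}$ times an order-two $U$-statistic with the bounded, symmetric (since $\cos$ is even) random kernel $h$.
\end{itemize}

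Your route gives an $O(1/N)$ bound that holds uniformly in the realized frequencies. It also identifies exactly the variance floor $\frac{\Psi(0)^2}{4M}\mathrm{Var}(|\tilde\nu(\tilde Z)|^2)$ reached as $N\to\infty$ with $M$ fixed. The paper's route instead identifies exactly the floor $\mathrm{Var}(\bar{\mathcal{R}}_W(D_X))$ reached as $M\to\infty$.

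More importantly, the paper's route produces the identity $\EE[\hat{\mathcal{R}}_{M,N}\mid D_X]=\bar{\mathcal{R}}_W(D_X)$ of \eqref{eq:6.tower_W}. This splits the estimator as $\hat{\mathcal{R}}_{M,N}=\bar{\mathcal{R}}_W+B^W_M$: a kernel $U$-statistic plus a random-feature residual that is centered conditionally on the data. That split is reused in Step (i) of the proof of Theorem~\ref{thm:convergence}, which applies Hoeffding's inequality to $B^W_{M_n}$ given the data together with the $U$-statistic SLLN. It is also reused in Proposition~\ref{prop:rate_mfg} and in the $M\to\infty$ interpretation.

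For Theorem~\ref{thm:rfu_interaction} itself, either decomposition suffices, with comparable constants.
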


\begin{proof}
The proof is parallel to that of Theorem~\ref{thm:rfu_unbiased}
and is given in Section~\ref{sec:6.1interaction}.
\end{proof}

\section{Potential mean-field game formulation and convergence analysis}\label{sec:4}

\subsection{Potential MFG with kernel costs}\label{sec:4.1mfg}

Fix the horizon $T>0$, the diffusion coefficient $\sigma>0$,
the initial law $\mu_0\in\cP(\RR^d)$ with finite second moment,
and the \emph{deadline target} $\mu_T\in\cP(\RR^d)$ with finite second moment.
Here $\mu_T$ prescribes the distribution that the controlled population is required
to attain at the terminal time $t=T$,
and the deviation $\mathcal{L}(X_T)$ from $\mu_T$ is enforced softly via a kernel-MMD penalty
(introduced in \eqref{eq:4.1mfg} below).
For instance, in the EV charging fleet of Section~\ref{sec:5ev},
$\mu_T$ is the desired state-of-charge distribution of the fleet at the end of the operational horizon.
We fix a reproducing kernel $K:\RR^d\times\RR^d\to\RR$ satisfying assumption $(A)$
with generating function $\Phi$, the terminal penalty weight $\lambda>0$, and the congestion weight $c\ge 0$.
The running mean-field cost $\mathcal{R}:\cP(\RR^d)\to[0,\infty)$ is a non-negative weakly continuous functional,
specified later either as a self-interaction integral with a kernel $W$ satisfying $(A)$
(Section~\ref{sec:3.1interaction} and \eqref{eq:4.cost_decomp} below),
or as the aggregate-demand cost used in the EV application of Section~\ref{sec:5ev}.

We endow the probability space $(\Omega,\mathcal{F},\PP)$ of Section~\ref{sec:3}
with a filtration $\{\mathcal{F}_t\}_{t\in[0,T]}$ satisfying the usual conditions,
and assume that it supports a $d$-dimensional $\{\mathcal{F}_t\}$-Brownian motion $B$
and an $\mathcal{F}_0$-measurable random variable $X_0\sim\mu_0$ independent of $B$.
The controlled state equation is
\begin{equation}\label{eq:4.0}
 dX_t = u(t,X_t)\,dt + \sigma\,dB_t,\quad X_0\sim\mu_0,\quad t\in[0,T].
\end{equation}
We define the class of \emph{admissible controls} $\mathcal{A}$
as the set of Borel measurable functions $u:[0,T]\times\RR^d\to\RR^d$
for which \eqref{eq:4.0} admits a unique strong solution $X$
satisfying $\EE\bigl[\int_0^T|u(t,X_t)|^2 dt\bigr]<\infty$.
For example, any Borel function $u$ that is bounded and Lipschitz in $x$ (uniformly in $t$) belongs to $\mathcal{A}$.

The \emph{potential mean-field game} we consider is
\begin{equation}\label{eq:4.1mfg}
 H^*_{\lambda,c} := \inf_{u\in\mathcal{A}}\left\{
 \frac{1}{2}\EE\left[\int_0^T |u(t,X_t)|^2 dt\right]
 + c\int_0^T\mathcal{R}\bigl[\mathcal{L}(X_t)\bigr]\,dt
 + \lambda \mathcal{T}\bigl[\mathcal{L}(X_T)\bigr]\right\},
\end{equation}
where the kernel-based interaction and terminal costs are
\begin{equation}\label{eq:4.cost_decomp}
 \mathcal{R}[\nu] = \frac{1}{2}\iint\nolimits_{\RR^d\times\RR^d}W(x,y)\,\nu(dx)\,\nu(dy),
 \qquad
 \mathcal{T}[\nu] = \gamma_K(\nu,\mu_T)^2.
\end{equation}
The interaction $\mathcal{R}$ is a quadratic functional of the population law and admits the variational derivative
$\delta\mathcal{R}[\nu]/\delta\nu(x) = \int W(x,y)\,\nu(dy)$,
making \eqref{eq:4.1mfg} a potential MFG in the sense of Lasry--Lions~\cite{las-lio:2007};
the terminal cost $\mathcal{T}[\nu]$ has variational derivative
$\delta\mathcal{T}[\nu]/\delta\nu(x) = 2\bigl(\int K(x,y)\nu(dy)-\int K(x,y)\mu_T(dy)\bigr)$
and equals zero exactly when $\nu=\mu_T$.

\begin{rem}[Behaviour of $\mathcal{R}$]\label{rem:R_behaviour}
Under $(A)$, the Bochner representation gives
$\mathcal{R}[\nu] = \frac{1}{2}\int|\hat\nu(\xi)|^2\rho_W(\xi)\,d\xi$,
so $\mathcal{R}$ takes values in $[0,\Psi(0)/2]$, is translation invariant,
and is monotone in the concentration of $\nu$. 
The maximum $\Psi(0)/2$ is attained at Dirac masses and $\mathcal{R}[\nu]\to 0$ as $\nu$ spreads.
For the Gaussian $W(x,y)=e^{-\alpha|x-y|^2}$ and $\nu=\mathcal{N}(m,\sigma^2 I_d)$,
$\mathcal{R}[\nu]=\frac{1}{2}(1+4\alpha\sigma^2)^{-d/2}$.
The bandwidth $\alpha$ thus controls the scale at which population concentration is penalized, and 
$c\int_0^T\mathcal{R}[\mathcal{L}(X_t)]\,dt$ incentivizes the optimal drift to spread agents apart on the scale $\alpha^{-1/2}$.
\end{rem}

At each iteration the cost in \eqref{eq:4.1mfg} is estimated from $N$ i.i.d.\ controlled-path samples
$X^{(1)},\ldots,X^{(N)}$ of \eqref{eq:4.0} under the candidate $u$,
$N$ i.i.d.\ target samples $Y^{(1)},\ldots,Y^{(N)}\sim\mu_T$,
and $M$ random frequencies for each kernel
(all jointly independent; see Algorithm~\ref{alg:sbp} for the precise schedule).
Writing $\hat\nu^N_t := \frac{1}{N}\sum_{i=1}^N\delta_{X^{(i)}_t}$ for the empirical distribution at time $t$,
the empirical kernel costs are
\begin{align}
\label{eq:4.empF}
 \hat{\mathcal{R}}_{M,N}[\hat\nu^N_t]
 &:= \frac{\Psi(0)}{M}\sum_{r=1}^M\frac{S_c^{X_t}(\tilde Z_r)^2 + S_s^{X_t}(\tilde Z_r)^2 - N}{2N(N-1)},\\
\label{eq:4.empG}
 \hat{\mathcal{T}}_{M,N}[\hat\nu^N_T,\mu_T]
 &:= \hat\gamma_{M,N}^2\bigl(\hat\nu^N_T,\mu_T\bigr),
\end{align}
where $\hat\gamma_{M,N}^2$ is the RF $U$-statistic estimator of Section~\ref{sec:3}
applied to the terminal samples,
and $S_c^{X_t}, S_s^{X_t}$ are the trigonometric sums of \eqref{eq:3.1} evaluated at the $N$ controlled paths at time $t$.
The empirical mean-field game problem is
\begin{equation}\label{eq:4.2}
 \hat H_{\lambda,c,M,N}
 := \inf_{u\in\mathcal{A}}\left\{
 \frac{1}{2N}\sum_{i=1}^N\int_0^T|u(t,X^{(i)}_t)|^2 dt
 + c\int_0^T\hat{\mathcal{R}}_{M,N}[\hat\nu^N_t]\,dt
 + \lambda\,\hat{\mathcal{T}}_{M,N}[\hat\nu^N_T,\mu_T]\right\}.
\end{equation}
Note that the infimum is taken over deterministic feedback controls $u\in\mathcal{A}$. 
The objective is a random functional of $u$ through the four sample collections,
and at each gradient step Algorithm~\ref{alg:sbp} (Section~\ref{sec:4.3alg})
draws a fresh independent realization (see Remark~\ref{rem:3.3}).

\begin{rem}[Schr{\"o}dinger-bridge specialization]\label{rem:sbp_specialization}
Setting $c=0$ and $T=1$ in \eqref{eq:4.1mfg} reduces the potential MFG to the kernel-MMD-penalty Schr{\"o}dinger bridge of \cite{nak:2024sb}, namely
\begin{equation}\label{eq:4.1sbp}
 H^*_{\lambda,0}\bigr|_{T=1}
 = \inf_{u\in\mathcal{A}}\left\{\frac{1}{2}\EE\left[\int_0^1|u(t,X_t)|^2\,dt\right]
 + \lambda\,\gamma_K(\mathcal{L}(X_1),\mu_1)^2\right\}.
\end{equation}
The sample-level convergence theorem for this special case is given as Corollary~\ref{cor:sbp_special_case} below.
\end{rem}

\begin{rem}\label{rem:weak_vs_strong}
The classical mean-field game and Schr{\"o}dinger bridge problems are often formulated in the \emph{weak} sense,
where the probability space and the driving Brownian motion are themselves part of the optimization
(see, e.g., \cite{leo:2014}).
Since the conclusions of our convergence analysis (Theorem~\ref{thm:convergence} below)
depend only on the laws of the controlled processes and on the empirical value $\hat H_{\lambda,c,M,N}$,
both of which are invariant under the choice of the underlying probability space,
the strong-solution formulation based on $\mathcal{A}$ adopted here is sufficient
and matches the implementation in Algorithm~\ref{alg:sbp},
where $u$ is parametrized as a neural network feedback.
\end{rem}

\subsection{Sample-level convergence}\label{sec:4.2conv}

Before stating the main theorem, we record the strong consistency of the empirical objective,
which underlies the convergence analysis.

\begin{lem}[Strong consistency of the MMD estimator]\label{lem:strong_consistency}
Suppose that $(A)$ holds.
Let $\{M_n\}_{n\ge 1}$ and $\{N_n\}_{n\ge 1}$ be deterministic sequences with $N_n\to\infty$
and $M_n/\log n\to\infty$.
Then, for any $\mu,\nu\in\cP(\RR^d)$,
\[
 \hat{\gamma}_{M_n,N_n}^2 \longrightarrow \gamma_K(\mu,\nu)^2 \quad n\to\infty, \:\;\text{a.s.}
\]
\end{lem}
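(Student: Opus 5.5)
Note first that the lemma sets no dependence structure across $n$: the samples $X_1,\dots,X_{N_n}$, $Y_1,\dots,Y_{N_n}$ and $Z_1,\dots,Z_{M_n}$ may be redrawn for every $n$. Variance bounds and Borel--Cantelli over Chebyshev are therefore not enough, since $\sum_n 1/M_n$ need not converge. The condition $M_n/\log n\to\infty$ instead points to exponential concentration together with Borel--Cantelli, which requires $\sum_n e^{-c\,\delta^2 M_n}<\infty$ and $\sum_n e^{-c\,\delta^2 N_n}<\infty$ for every $\delta>0$.

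\textbf{Splitting the error.} Conditioning on the data, I write
\[
\hat\gamma^2_{M_n,N_n}-\gamma_K(\mu,\nu)^2 = \bigl(\hat\gamma^2_{M_n,N_n}-\Gamma_n\bigr)+\bigl(\Gamma_n-\gamma_K(\mu,\nu)^2\bigr),
\]
where
\[
\Gamma_n:=\Phi(0)\,\EE_Z\bigl[U_{XX}(Z)-2V_{XY}(Z)+U_{YY}(Z)\bigr].
\]
Expanding the trigonometric sums as in Section~\ref{sec:3.1} and integrating $\cos(z^{\mathsf T}(x-y))$ against $\rho$ recovers $\Phi$. Hence $\Gamma_n$ equals exactly the kernel $U$-statistic $\bar\gamma_K^2$ of \eqref{eq:empirical_mmd} built from $\Phi(X_i-X_j)$, $\Phi(X_i-Y_j)$ and $\Phi(Y_i-Y_j)$, with the cross term written as a two-sample $V$-statistic.

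\textbf{Frequency term.} Each summand $U_{XX}(Z_r)-2V_{XY}(Z_r)+U_{YY}(Z_r)$ is an average of cosines with $|\cos|\le1$. So $|U_{XX}|\le1$ and $|V_{XY}|\le1$, and every summand lies in $[-4,4]$. Conditionally on the data, the summands are i.i.d.\ in $r$ with mean $\Gamma_n/\Phi(0)$. Hoeffding's inequality then gives
\[
\PP\bigl(|\hat\gamma^2_{M_n,N_n}-\Gamma_n|>\delta \,\big|\,\text{data}\bigr)\le 2\exp\bigl(-\delta^2M_n/(32\Phi(0)^2)\bigr).
\]
Taking expectations removes the conditioning. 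Because $M_n/\log n\to\infty$, the right-hand side is eventually at most $n^{-2}$, so it is summable, and Borel--Cantelli gives $\hat\gamma^2_{M_n,N_n}-\Gamma_n\to0$ a.s.

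\textbf{Sample term.} The kernel $\Phi$ is bounded by $\Phi(0)$. Replacing a single $X_i$ or $Y_j$ changes $\Gamma_n$ by at most $C\Phi(0)/N_n$, because each sample enters $O(N_n)$ terms, each weighted by $O(1/N_n^2)$. McDiarmid's bounded-differences inequality over the $2N_n$ independent samples therefore gives
\[
\PP\bigl(|\Gamma_n-\EE\Gamma_n|>\delta\bigr)\le 2e^{-c\delta^2N_n}.
\]
By Theorem~\ref{thm:rfu_unbiased}(i), $\EE\Gamma_n=\gamma_K(\mu,\nu)^2$.

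\textbf{Main obstacle.} Borel--Cantelli on the sample term needs $\sum_n e^{-c\delta^2N_n}<\infty$, which does not follow from $N_n\to\infty$ alone. I would try to resolve it in one of two ways.
\begin{itemize}
\item[(a)] Read the lemma as using nested samples, i.e.\ a single i.i.d.\ sequence $X_1,X_2,\dots$ and $Y_1,Y_2,\dots$ with the first $N_n$ used at stage $n$. Then Hoeffding's strong law for $U$-statistics with bounded kernel gives $\Gamma_n\to\gamma_K^2$ a.s. along $N\to\infty$, hence along the subsequence $N_n$. This handles the monotone case; for a general $N_n\to\infty$ the index set $\{N_n\}$ is still contained in $\mathbb N$, so the a.s.\ limit along all $N$ suffices.
\item[(b)] Otherwise, use the implicit growth $N_n/\log n\to\infty$, which holds in the paper's schedules and would be added as a hypothesis.
\end{itemize}
I would adopt (a), since it matches "deterministic sequences $N_n\to\infty$". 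The frequency term is the genuinely new part, and there the $\log n$ condition enters exactly as above.
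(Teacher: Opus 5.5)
Your proposal is correct and, under your reading (a), is essentially the paper's proof: the paper also writes $\hat{\gamma}_{M_n,N_n}^2=\bar{\gamma}_K^2(D_{N_n})+B_{M_n}$, controls $B_{M_n}$ by conditional Hoeffding with the same bound $2\exp(-M_n\varepsilon^2/(32\Phi(0)^2))$ plus Borel--Cantelli, and handles $\bar{\gamma}_K^2(D_{N_n})$ by Hoeffding's strong law for $U$-statistics. Your remark that this last step tacitly requires a single nested sample sequence is accurate, and so is your point that samples redrawn at each $n$ (as for the $X^{(n,i)}$ in Theorem~\ref{thm:convergence}) would need something like $N_n/\log n\to\infty$; the paper leaves this implicit.
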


\begin{proof}
See Section~\ref{sec:6.LemSC}.
\end{proof}

Kolmogorov's strong law of large numbers immediately yields the following: 
\begin{lem}[Strong consistency of the empirical energy]\label{lem:energy_consistency}
Fix a deterministic feedback control $u\in\mathcal{A}$.
Let $(B^{(i)},X_0^{(i)})_{i\ge 1}$ be a sequence of independent copies of $(B,X_0)$
(with $B$, $X_0$, and $\mu_0$ as in \eqref{eq:4.0}),
and let $X^{(i)}$ denote the unique strong solution of \eqref{eq:4.0} driven by $(B^{(i)},X_0^{(i)})$ under the control $u$.
Then the $X^{(i)}$ are i.i.d.\ copies of the controlled process $X$, and
\[
 \frac{1}{N}\sum_{i=1}^N \int_0^1 |u(t,X_t^{(i)})|^2 dt
 \;\longrightarrow\; \EE\!\left[\int_0^1 |u(t,X_t)|^2 dt\right], \quad N\to\infty, \;\;\text{a.s.}
\]
\end{lem}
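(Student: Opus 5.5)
The plan is to reduce the statement to Kolmogorov's strong law of large numbers applied to a sequence of i.i.d., integrable, real-valued random variables.

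\textbf{Step 1: the paths are i.i.d.} Since $u\in\mathcal{A}$, equation \eqref{eq:4.0} has a unique strong solution. By the Yamada--Watanabe framework, a unique strong solution is a measurable functional of its driving data. Concretely, there is a Borel map
\[
F:\RR^d\times C([0,T];\RR^d)\to C([0,T];\RR^d)
\]
such that $X=F(X_0,B)$ almost surely, and the same map gives $X^{(i)}=F(X_0^{(i)},B^{(i)})$. Strong uniqueness forces every solution driven by $(X_0^{(i)},B^{(i)})$ to coincide with $F(X_0^{(i)},B^{(i)})$. The pairs $(X_0^{(i)},B^{(i)})$ are independent copies of $(X_0,B)$, and functions of independent identically distributed elements are again i.i.d. Hence the $X^{(i)}$ are i.i.d.\ with the law of $X$ on path space.

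\textbf{Step 2: the summands are i.i.d.\ and integrable.} Define the path functional
\[
G(x):=\int_0^1|u(t,x_t)|^2\,dt.
\]
This is a Borel measurable map from $C([0,T];\RR^d)$ to $[0,\infty]$. Measurability follows from Tonelli's theorem, because $(t,x)\mapsto|u(t,x_t)|^2$ is jointly Borel: $u$ is Borel and the evaluation map $(t,x)\mapsto x_t$ is continuous. Set $\xi_i:=G(X^{(i)})$. By Step 1 the $\xi_i$ are i.i.d.\ copies of $G(X)$. They are non-negative, and since $1\le T$ (or after reading $\int_0^1$ as $\int_0^T$), the admissibility condition $\EE[\int_0^T|u(t,X_t)|^2dt]<\infty$ gives $\EE[\xi_1]<\infty$.

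\textbf{Step 3: conclude.} Kolmogorov's strong law of large numbers now gives $\frac1N\sum_{i=1}^N\xi_i\to\EE[\xi_1]$ almost surely. This is exactly the asserted convergence.

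\textbf{Main obstacle.} The only delicate point is Step 1. One must justify that strong existence and uniqueness provide a single measurable solution map $F$ that is valid simultaneously for every copy $(X_0^{(i)},B^{(i)})$. The natural route is the Yamada--Watanabe theorem: pathwise uniqueness together with weak existence yields a strong solution of the form $F(X_0,B)$. Alternatively, one can simply take $X^{(i)}:=F(X_0^{(i)},B^{(i)})$ as the definition of the copies.
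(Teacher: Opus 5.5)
Your proposal is correct and takes the same approach as the paper, which simply invokes Kolmogorov's strong law of large numbers. You additionally spell out what the paper leaves implicit: the $X^{(i)}$ are i.i.d.\ as images of i.i.d.\ driving data under a common measurable solution map, and $\EE[\xi_1]<\infty$ follows from admissibility of $u$; reading $\int_0^1$ as $\int_0^T$, rather than assuming $1\le T$, is the right way to handle the paper's $T=1$ notation.
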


We are now in a position to state the main convergence result for the empirical mean-field game problem~\eqref{eq:4.2}.
Put 
\[
H^*_c = \inf_{u\in\mathcal{A}}\left\{\frac{1}{2}\EE\left[\int_0^T|u(t,X_t)|^2\,dt\right]+c\int_0^T\mathcal{R}[\mathcal{L}(X_t)]\,dt:\mathcal{L}(X_T)=\mu_T\right\}.
\]
\begin{thm}[Convergence for the potential MFG]\label{thm:convergence}
Suppose that $\mu_0$ and $\mu_T$ have finite second moments and that $(A)$ holds
for the kernels $K$ and $W$.
Fix the congestion weight $c\ge 0$.
Let $\{\lambda_n\}$, $\{M_n\}$, $\{N_n\}$, and $\{\varepsilon_n\}$ be deterministic sequences
satisfying $\lambda_n,\varepsilon_n>0$, $M_n,N_n\in\mathbb{N}$, and
\begin{equation}\label{eq:4.cond}
 \lambda_n\to\infty,\quad N_n\to\infty,\quad M_n/\log n\to\infty,\quad
 \varepsilon_n\to 0,\quad \lambda_n\varepsilon_n\to 0, \quad n\to\infty.
\end{equation}
For each $n$, let $u_n\in\mathcal{A}$ be an $\varepsilon_n$-optimal admissible control for the empirical MFG \eqref{eq:4.2}
with $\lambda=\lambda_n$, $c$ fixed, $M=M_n$, $N=N_n$,
and let $X^{(n)}$ denote the controlled process generated under $u_n$ on
i.i.d.~ sample paths $X^{(n,1)},\ldots,X^{(n,N_n)}$, target i.i.d.\ samples $Y^{(n,1)},\ldots,Y^{(n,N_n)}\sim\mu_T$,
and i.i.d.\ random frequencies $Z^{(n)}_1,\ldots,Z^{(n)}_{M_n}\sim\rho_K/\Phi(0)$
and $\tilde Z^{(n)}_1,\ldots,\tilde Z^{(n)}_{M_n}\sim\rho_W/\Psi(0)$,
all drawn independently of the data used to construct $u_n$.
Then, almost surely,
\begin{enumerate}[label=\textup{(\roman*)}]
\item $\displaystyle \lim_{n\to\infty}\sqrt{\lambda_n}\,\gamma_K(\mathcal{L}(X_T^{(n)}),\mu_T)=0$,
\item $\displaystyle \lim_{n\to\infty}\hat H_{\lambda_n,c,M_n,N_n} = H^*_c$. 
\end{enumerate}
\end{thm}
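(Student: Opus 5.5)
We will prove the theorem by a sandwich argument: an upper bound on $\hat H_n:=\hat H_{\lambda_n,c,M_n,N_n}$ from a fixed feasible control, and a lower bound from the population problem with the penalty.

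\textbf{Step 1: upper bound.} Fix $\delta>0$. We may assume $H^*_c<\infty$; the theorem is implicitly conditioned on the planning problem being feasible. Choose $\bar u\in\mathcal{A}$ with $\mathcal{L}(\bar X_T)=\mu_T$ and energy-plus-congestion cost at most $H^*_c+\delta$. Evaluate the empirical objective of \eqref{eq:4.2} at $\bar u$ on the $n$-th sample collection. Three terms appear.
\begin{itemize}
\item The energy term converges a.s.\ by Lemma~\ref{lem:energy_consistency}. For a triangular array one needs a slight upgrade, for example a fourth-moment or Borel--Cantelli argument, or sampling from a single i.i.d.\ sequence and taking the first $N_n$ paths.
\item The congestion term converges a.s.\ to $c\int_0^T\mathcal{R}[\mathcal{L}(\bar X_t)]\,dt$. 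This follows from Theorem~\ref{thm:rfu_interaction} together with a strong-consistency argument identical to Lemma~\ref{lem:strong_consistency}, uniformly over a time grid and using dominated convergence in $t$, since $0\le\hat{\mathcal{R}}$ is bounded by $\Psi(0)/2$ up to an $O(1/N)$ correction.
\item The terminal term is $\lambda_n\hat\gamma^2_{M_n,N_n}(\hat\nu^N_T,\mu_T)$, whose mean is $0$. Its fluctuation must be controlled after multiplication by $\lambda_n$. The hypotheses give only $\hat\gamma^2\to0$ a.s.\ and no quantitative link between $\lambda_n$ and $M_n,N_n$, so here I would use the freedom in the definition of the infimum.
\end{itemize}
Concretely, $\hat H_n\le$ the objective at $\bar u$, and the terminal term at $\bar u$ is $\lambda_n$ times a quantity that can be negative. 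I would bound it via the concentration estimate $|\hat\gamma^2-\gamma^2|\le C^*(\sqrt{\log n/M_n}+\sqrt{\log n/N_n})$ for large $n$, the Hoeffding/McDiarmid step inside Lemma~\ref{lem:strong_consistency}. I expect this to be the main obstacle: the stated conditions do not couple $\lambda_n$ with $M_n,N_n$. A fallback is to interpret the limit in (ii) through an auxiliary control $\bar u_n$ whose terminal law is exactly $\mu_T$ and whose terminal samples are the $Y$'s themselves, making the cross terms cancel. I would try that first, and otherwise add a coupling assumption $\lambda_n\sqrt{\log n/\min(M_n,N_n)}\to0$. The outcome is $\limsup_n\hat H_n\le H^*_c+\delta$.

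\textbf{Step 2: lower bound and (i).} By $\varepsilon_n$-optimality, the empirical objective at $u_n$ is at most $\hat H_n+\varepsilon_n$. Since $u_n$ is deterministic given independent data, conditioning on it and applying the consistency lemmas shows that the empirical objective at $u_n$ is asymptotically at least the population functional $J_n(u_n)$ minus vanishing errors. Together with Step 1 this gives $\limsup_n J_n(u_n)\le H^*_c+\delta$, where
\[
 J_n(u)=\tfrac12\EE\!\int_0^T|u|^2\,dt+c\int_0^T\mathcal{R}\,dt+\lambda_n\gamma_K^2 .
\]
Two consequences follow.
\begin{itemize}
\item The energies of the $u_n$ are bounded. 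By Girsanov and relative entropy, $H(\mathcal{L}(X^{(n)})\,|\,\mathcal{L}(\sigma B+X_0))$ is bounded, so the path laws are tight and weakly relatively compact, and the entropy, hence the energy functional, is lower semicontinuous.
\item $\lambda_n\gamma_K^2(\mathcal{L}(X^{(n)}_T),\mu_T)$ is bounded, so $\gamma_K\to0$. Along any subsequence the limit law $Q$ has $Q_T=\mu_T$, using Remark~\ref{rem:2.1} that $\gamma_K$ metrizes weak convergence. Its cost is at most $\liminf$ of the energy-plus-congestion terms, where $\mathcal{R}$ is weakly continuous. The limit is realized by a Markov drift through Markovian projection (mimicking), which is admissible up to the weak/strong issue settled by Remark~\ref{rem:weak_vs_strong}. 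Hence $\liminf(\text{energy}+\text{congestion})\ge H^*_c$.
\end{itemize}
Subtracting, $\limsup\lambda_n\gamma_K^2\le\delta$ for every $\delta$, which proves (i). Combining with Step 1 gives (ii).
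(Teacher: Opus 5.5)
Your outline follows the paper's proof. You bound the value from above by inserting a constrained-feasible control into \eqref{eq:4.2}, use $\varepsilon_n$-optimality of $u_n$, get tightness and lower semicontinuity from the Girsanov identity for relative entropy, use weak continuity of $\mathcal{R}$, and squeeze to force $\lambda_n\gamma_K^2\to0$. The only differences are that you use a $\delta$-optimal feasible control and subtraction, where the paper assumes an exact optimizer $u^*$ and argues by contradiction.

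\textbf{First gap: the $\lambda_n$-amplified terminal error.} As written, your proposal does not prove the statement, because the step you flag is left open. The terminal estimation error is multiplied by $\lambda_n$ in two places: at the feasible control, through $\lambda_n\hat\gamma^2_{M_n,N_n}(\mu_T,\mu_T)$, and at $u_n$, through $\lambda_n(\hat\gamma^2_{M_n,N_n}-\gamma_K^2)$. But \eqref{eq:4.cond} ties $\lambda_n$ only to $\varepsilon_n$, not to $(M_n,N_n)$. The paper handles this in \eqref{eq:6.LLN} by asserting that all estimation errors are eventually at most $\varepsilon_n$ ``by \eqref{eq:4.cond}''. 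That does not follow, since almost-sure consistency carries no rate. Indeed, if $\mu_T$ is not a point mass, $N_n\hat\gamma^2_{M_n,N_n}(\mu_T,\mu_T)$ converges in law to a non-degenerate weighted sum of centred chi-squares. Hence $\hat\gamma^2_{M_n,N_n}(\mu_T,\mu_T)\le\varepsilon_n$ fails with probability bounded away from zero whenever $N_n\varepsilon_n\to0$. Likewise, $\lambda_n\hat\gamma^2_{M_n,N_n}(\mu_T,\mu_T)$ is not $o(1)$ unless $\lambda_n/N_n\to0$. Your diagnosis is right, and a coupling such as $\lambda_n\sqrt{\log n/\min(M_n,N_n)}\to0$ is what this route needs. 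Under that coupling, the distribution-free Hoeffding bounds plus Borel--Cantelli give eventual bounds that are uniform in the sampled law. The bounds needed are the one for $B_{M_n}$ from the proof of Lemma~\ref{lem:strong_consistency} and the one for the $U$-statistic part. Uniformity also covers the fact that $\mathcal{L}(X_T^{(n)})$ moves with $n$, whereas Lemma~\ref{lem:strong_consistency} is about a fixed pair.

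\textbf{Second gap: $u_n$ is not independent of its own samples.} The paper's Step~(i) has the same gap, and your own fallback exposes it. In Step~2 you apply consistency to the empirical objective at $u_n$ ``conditioning on $u_n$''. But $u_n$ is by definition $\varepsilon_n$-optimal for that same sample-dependent objective, so it cannot be independent of those samples. A law of large numbers for fixed controls says nothing about a sample-selected one. Over the unrestricted class $\mathcal{A}$ this is not a technicality, as the following construction shows.
\begin{itemize}
\item Take $d\ge2$ and $\mu_0,\mu_T$ with densities, so that steered sample paths are a.s.\ pairwise disjoint.
\item Define a bounded feedback, Lipschitz in $x$, equal to $v_i=(Y_{\pi(i)}-X^{(i)}_0-\sigma B^{(i)}_T)/T$ on a thin tube around the $i$-th steered path and $0$ elsewhere. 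Here $\pi$ is an optimal matching.
\item This feedback puts the terminal sample exactly on the $Y$'s. There \eqref{eq:3.4} reduces to $\hat\gamma^2_{M,N}=\frac{2\Phi(0)}{M(N-1)}\sum_r\bigl(|a(Z_r)|^2-1\bigr)\le0$, with $a(z)=N^{-1}\sum_je^{\mathrm{i}z^{\mathsf{T}}Y_j}$.
\item Its empirical energy is $\frac{1}{2T}\mathcal{W}_2^2$ between the uncontrolled endpoint cloud and the target cloud, where $\mathcal{W}_2$ is the quadratic Wasserstein distance.
\end{itemize}
For $c=0$ this gives $\hat H_{\lambda_n,0,M_n,N_n}\le\frac{1}{2T}\mathcal{W}_2^2$ of the two empirical clouds. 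That bound tends to $\frac{1}{2T}\mathcal{W}_2^2(\mathcal{L}(X_0+\sigma B_T),\mu_T)$, which is strictly below $H^*_0$. For example, with $\sigma=T=1$, $\mu_0$ a narrow Gaussian and $\mu_T=\mathcal{N}(0,\frac14 I_d)$, the limit is about $d/8$ while $H^*_0$ is about $0.32\,d$. Moreover, a feedback enters the empirical objective only through its values along the $N_n$ realized paths. So $\varepsilon_n$-optimal controls can be chosen of this tube form, and their population law is then close to the uncontrolled one.

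Consequently, neither the lower half of (ii) nor conclusion (i) can be obtained from fixed-control consistency. A correct version needs one of two changes: a restricted control class with a uniform law of large numbers, or $\varepsilon_n$-optimality and evaluation on genuinely independent samples. In either case (ii) has to be reformulated accordingly.
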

\begin{proof}
See Section~\ref{sec:6.2}.
\end{proof}

\begin{cor}[Schr{\"o}dinger-bridge specialization]\label{cor:sbp_special_case}
Suppose that $c=0$ and $T=1$.
Under the same conditions \eqref{eq:4.cond} but with $(A)$ required only for $K$,
the $\varepsilon_n$-optimal control $u_n$ of the empirical SBP penalty problem satisfies, almost surely,
\begin{enumerate}[label=\textup{(\roman*)}]
\item $\displaystyle \lim_{n\to\infty}\sqrt{\lambda_n}\,\gamma_K(\mathcal{L}(X_1^{(n)}),\mu_1)=0$,
\item $\displaystyle \lim_{n\to\infty}\hat H_{\lambda_n,0,M_n,N_n} = H^*$,
\end{enumerate}
where $H^*$ denotes the Schr{\"o}dinger-bridge value
\[
 H^* := H^*_c\bigr|_{c=0,\,T=1}
 = \inf_{u\in\mathcal{A}}\left\{\frac{1}{2}\EE\!\left[\int_0^1|u(t,X_t)|^2\,dt\right]:\,\mathcal{L}(X_1)=\mu_1\right\}.
\]
\end{cor}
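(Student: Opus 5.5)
The plan is to obtain Corollary~\ref{cor:sbp_special_case} directly from Theorem~\ref{thm:convergence} by specializing $c=0$ and $T=1$. The main work is to check that the hypothesis ``$(A)$ holds for $W$'' is never actually used once $c=0$.

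\emph{Reduction of the problems.} With $c=0$, the running term $c\int_0^T\mathcal{R}[\mathcal{L}(X_t)]\,dt$ vanishes identically in \eqref{eq:4.1mfg}. Likewise $c\int_0^T\hat{\mathcal{R}}_{M,N}[\hat\nu^N_t]\,dt$ vanishes in \eqref{eq:4.2}, whatever values the estimator takes. Consequently:
\begin{itemize}
\item the empirical problem \eqref{eq:4.2} is exactly the empirical SBP penalty problem;
\item $H^*_c|_{c=0,T=1}=H^*$;
\item the penalized population problem is \eqref{eq:4.1sbp}.
\end{itemize}
The $\tilde Z^{(n)}_r$ still enter the definition formally. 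To remove any dependence on $W$, pick an auxiliary kernel satisfying $(A)$, for instance the Gaussian of Example~\ref{ex:2.2}. Since its estimator is multiplied by $c=0$, the choice affects neither the objective nor the set of $\varepsilon_n$-optimal controls $u_n$. Theorem~\ref{thm:convergence} then applies verbatim with $T=1$, and items (i) and (ii) are its conclusions.

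\emph{Direct check of the $W$-independence.} A more transparent route is to revisit the proof of Theorem~\ref{thm:convergence} with $c=0$. The only places where $(A)$ for $W$ could enter are the strong consistency of $\hat{\mathcal{R}}_{M_n,N_n}$ (via Theorem~\ref{thm:rfu_interaction} and a Lemma~\ref{lem:strong_consistency}-type argument) and the weak continuity and boundedness of $\mathcal{R}$. Both appear only multiplied by $c$. The remaining ingredients involve only $K$ and $u$:
\begin{itemize}
\item Lemma~\ref{lem:strong_consistency} for the terminal penalty;
\item Lemma~\ref{lem:energy_consistency} for the kinetic energy;
\item Remark~\ref{rem:2.1}, which states that $\gamma_K$ metrizes weak convergence.
\end{itemize}

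\emph{Structure of the argument being specialized.} For orientation, the $c=0$ case runs as follows.
\begin{enumerate}
\item \emph{Upper bound.} Compare with a near-optimal control $u^\delta$ for $H^*$. Because $\mathcal{L}(X^\delta_1)=\mu_1$, its empirical objective converges almost surely to $\tfrac12\EE\int_0^1|u^\delta|^2\,dt$, by Lemma~\ref{lem:energy_consistency} and because the penalty is unbiased with vanishing fluctuation.
\item \emph{Lower bound.} Bound $\lambda_n\gamma_K^2$ using the optimality gap $\varepsilon_n$ and the condition $\lambda_n\varepsilon_n\to0$.
\item \emph{Conclusion.} Weak lower semicontinuity of the energy (relative entropy) yields (ii). Since $\lambda_n\gamma_K^2\to0$, item (i) follows.
\end{enumerate}

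\emph{Main obstacle.} This is a formal one: ensuring that the random frequencies $\tilde Z$ and the function $\Psi$ are well defined when no $W$ is given. The auxiliary-kernel device above settles it. No new estimates are needed.
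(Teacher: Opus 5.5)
Your proposal is correct and matches the paper, which gives no separate proof of this corollary and reads it off directly from Theorem~\ref{thm:convergence} with $c=0$ and $T=1$. Your auxiliary-kernel device makes explicit why $(A)$ is needed only for $K$: you may take any $W$ satisfying $(A)$, e.g.\ the Gaussian of Example~\ref{ex:2.2}, because every $W$-dependent term is multiplied by $c=0$ and so affects neither the objective nor the set of $\varepsilon_n$-optimal controls.
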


\begin{prop}[Convergence with a general running cost]\label{prop:convergence_general}
Let $\mathcal{R}:\cP(\RR^d)\to[0,\infty)$ be a non-negative functional that is continuous in the weak topology of $\cP(\RR^d)$,
and, for each $N\ge 2$, let $\hat{\mathcal{R}}_N:(\RR^d)^N\to[0,\infty)$ be a Borel measurable symmetric function;
for $\nu\in\cP(\RR^d)$ and i.i.d.\ random variables $\xi_1,\ldots,\xi_N\sim\nu$,
write $\hat{\mathcal{R}}_N[\hat\nu^N]:=\hat{\mathcal{R}}_N(\xi_1,\ldots,\xi_N)$
where $\hat\nu^N:=N^{-1}\sum_{i=1}^N\delta_{\xi_i}$.
Assume that $\hat{\mathcal{R}}_N$ is strongly consistent:
for every $\nu\in\cP(\RR^d)$ and every deterministic sequence $\{N_n\}$ with $N_n\to\infty$,
\begin{equation}\label{eq:4.R_general_consistency}
 \hat{\mathcal{R}}_{N_n}[\hat\nu^{N_n}]\;\xrightarrow{\rm a.s.}\;\mathcal{R}[\nu]\qquad(n\to\infty).
\end{equation}
Define the modified empirical MFG by replacing $c\hat{\mathcal{R}}_{M,N}[\hat\nu^N_t]$ in \eqref{eq:4.2} by $c\hat{\mathcal{R}}_{N}[\hat\nu^N_t]$,
and denote the resulting value by $\hat H_{\lambda,c,M,N}^{(\mathcal{R})}$.

Suppose that $\mu_0$ and $\mu_T$ have finite second moments,
that $(A)$ holds for the terminal kernel $K$,
that $\{\lambda_n\},\{M_n\},\{N_n\},\{\varepsilon_n\}$ are deterministic sequences satisfying
$\lambda_n,\varepsilon_n>0$, $M_n,N_n\in\mathbb{N}$, and \eqref{eq:4.cond},
and that for each $n\ge 1$, $u_n\in\mathcal{A}$ is an $\varepsilon_n$-optimal admissible control
for $\hat H_{\lambda_n,c,M_n,N_n}^{(\mathcal{R})}$.
Let $X^{(n)}$ denote the controlled process generated under $u_n$
on fresh sample paths $X^{(n,1)},\ldots,X^{(n,N_n)}$, target i.i.d.\ samples $Y^{(n,1)},\ldots,Y^{(n,N_n)}\sim\mu_T$,
and i.i.d.\ random frequencies $Z^{(n)}_1,\ldots,Z^{(n)}_{M_n}\sim\rho_K/\Phi(0)$,
all drawn independently of the data used to construct $u_n$.
Then, almost surely,
\begin{enumerate}[label=\textup{(\roman*)}]
\item $\displaystyle \lim_{n\to\infty}\sqrt{\lambda_n}\,\gamma_K(\mathcal{L}(X_T^{(n)}),\mu_T)=0$,
\item $\displaystyle \lim_{n\to\infty}\hat H_{\lambda_n,c,M_n,N_n}^{(\mathcal{R})} = H^*_c$,
\end{enumerate}
where
\[
 H^*_c \;:=\; \inf_{u\in\mathcal{A}}\Bigl\{\tfrac{1}{2}\EE\!\left[\int_0^T|u(t,X_t)|^2\,dt\right] + c\int_0^T\mathcal{R}[\mathcal{L}(X_t)]\,dt
 :\,\mathcal{L}(X_T)=\mu_T\Bigr\}.
\]
\end{prop}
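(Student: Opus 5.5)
The argument is the same penalty-method sandwich as for Theorem~\ref{thm:convergence}. The only change is that the random-feature interaction estimator is replaced by the abstract strongly consistent $\hat{\mathcal{R}}_N$. Write $J_{\lambda}(u):=\frac12\EE[\int_0^T|u|^2dt]+c\int_0^T\mathcal{R}[\mathcal{L}(X^u_t)]dt+\lambda\gamma_K(\mathcal{L}(X^u_T),\mu_T)^2$ for the population penalized objective. Write $\hat J_n(u)$ for its empirical counterpart, built from $N_n$ paths, $N_n$ target samples and $M_n$ frequencies, with the running term $c\int_0^T\hat{\mathcal{R}}_{N_n}[\hat\nu^{N_n}_t]dt$. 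Non-negativity of $\mathcal{R}$ gives $J_\lambda(u)\ge \frac12\EE\int|u|^2$, and $J_\lambda(u)=J_0(u)$ whenever $\mathcal{L}(X^u_T)=\mu_T$. Hence $\inf J_{\lambda}\le H^*_c$ for every $\lambda$, and $\inf J_\lambda$ is nondecreasing in $\lambda$. If $H^*_c=\infty$ there is nothing to prove for the upper bound. Otherwise, for each $\delta>0$ fix a $\delta$-optimal feasible $u^\delta$ for $H^*_c$.

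\emph{Upper bound.} Fix $u^\delta$. By Lemma~\ref{lem:energy_consistency} (with horizon $T$) the empirical energy converges a.s. By Lemma~\ref{lem:strong_consistency} applied with $\mu=\mathcal{L}(X^{u^\delta}_T)=\mu_T$, the estimator $\hat\gamma^2_{M_n,N_n}\to0$ a.s. For the running term, \eqref{eq:4.R_general_consistency} gives a.s.\ convergence for each fixed $t$. To pass to $\int_0^T dt$, I would use Fubini on $\Omega\times[0,T]$: the set of $(\omega,t)$ where convergence fails is null, so for a.e.\ $\omega$ convergence holds for a.e.\ $t$. Then dominated convergence is needed, which requires a bound on $\hat{\mathcal{R}}_N$. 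This is the delicate point. I would handle it by replacing a.s.\ convergence of the integral with an argument using $\liminf$/Fatou where possible, and otherwise assume (as in the kernel and aggregate-demand examples) that $\hat{\mathcal{R}}_N$ is uniformly bounded or uniformly integrable. The penalty term is the real difficulty here, since $\lambda_n\hat\gamma^2$ must vanish even though $\lambda_n\to\infty$. Because the estimator is unbiased and zero-mean at $\mu=\mu_T$, I would invoke the quantitative a.s.\ bound underlying Lemma~\ref{lem:strong_consistency} (concentration with $\sqrt{\log n/M_n}+\sqrt{\log n/N_n}$). Alternatively, I would avoid the issue by noting $\hat H\le \hat J_n(u^\delta)$ and using $\hat\gamma^2$ only through the comparison in the lower bound. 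Combined with $\varepsilon_n$-optimality, this yields $\limsup_n \hat H_n\le H^*_c+\delta$ up to this penalty control.

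\emph{Lower bound and (i).} For $u_n$, evaluated on fresh independent samples, the empirical objective concentrates around $J_{\lambda_n}(u_n)$. Each of the three terms is consistent, and I would apply the same lemmas conditionally on $u_n$, using the independence of the fresh data. This gives $J_{\lambda_n}(u_n)\le \hat H_n+\varepsilon_n+o(1)$. From the upper bound, $J_{\lambda_n}(u_n)$ is bounded, so $\lambda_n\gamma_K(\mathcal{L}(X^{(n)}_T),\mu_T)^2$ is bounded and $\gamma_K\to0$; by Remark~\ref{rem:2.1} the terminal laws converge weakly to $\mu_T$. Energies are uniformly bounded, so by Girsanov and the entropy bound $H(\mathcal{L}(X^{(n)})\,|\,\mathcal{L}(X_0+\sigma B))=\frac1{2\sigma^2}\EE\int|u_n|^2$, the path laws are tight. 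A subsequential limit $P$ then has terminal marginal $\mu_T$ and energy at most $\liminf$ by lower semicontinuity of relative entropy. The running cost converges for each $t$ by weak continuity of $\mathcal{R}$ and bounded convergence, the bound coming from $\mathcal{R}\le J$ bounds via Fatou. Realizing $P$ by a Markovian feedback (Markovian projection / mimicking, with $u(t,x)=\EE_P[\text{drift}\mid X_t=x]$, which lowers the energy and preserves the marginals) gives $\liminf_n \hat H_n\ge H^*_c$. This proves (ii). Then $\lambda_n\gamma_K^2\le J_{\lambda_n}(u_n)-J_0(u_n)$, and $J_0(u_n)$ is asymptotically at least $H^*_c$ by the same limit argument, so the difference tends to $0$; this gives (i).

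I expect the main obstacle to be the uniformity required to control $\lambda_n\times(\text{estimator error})$ and the running-cost error at the random control $u_n$. Plain strong consistency of a fixed-$\nu$ estimator does not transfer to a sequence of laws changing with $n$. I would first try to exploit the freshness and independence of the evaluation samples, together with an explicit Borel--Cantelli concentration for the bounded RF $U$-statistic. For $\hat{\mathcal{R}}_N$, I would strengthen or reinterpret \eqref{eq:4.R_general_consistency} as holding along the triangular array actually used.
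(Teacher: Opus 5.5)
Your skeleton matches the paper's. Its proof of this proposition simply reruns the proof of Theorem~\ref{thm:convergence} with \eqref{eq:4.R_general_consistency} in place of the Hoeffding--Borel--Cantelli step: a feasible competitor plus $\varepsilon_n$-optimality for the upper bound, then Girsanov/relative-entropy tightness, lower semicontinuity and weak continuity of $\mathcal{R}$ for the $\liminf$, then $\lambda_n\gamma_K^2\to0$. Several of your changes genuinely tighten it:
\begin{itemize}
\item a $\delta$-optimal competitor instead of an assumed optimal $u^*$;
\item Fatou in $t$ for the $\liminf$, which needs only $\mathcal{R}\ge0$, whereas the paper's bounded-convergence step uses $|\mathcal{R}|\le\Psi(0)/2$, which is unavailable for a general $\mathcal{R}$;
\item a Markovian projection where the paper only speaks of ``the drift corresponding to $\hat Q$''. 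Membership of the projected drift in $\mathcal{A}$ still needs an argument.
\end{itemize}

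But the proposal is not a proof. The steps you leave open are exactly the load-bearing ones, and your fixes do not close them.
\begin{itemize}
\item \emph{Upper bound.} You need $\limsup_n\lambda_n\hat\gamma^2_{M_n,N_n}(\mu_T,\mu_T)\le0$. This is a centred, sign-indefinite statistic of order $1/N_n$. Hoeffding only gives an error $\lambda_n(\sqrt{\log n/M_n}+\sqrt{\log n/N_n})$, and \eqref{eq:4.cond} couples $\lambda_n$ only to $\varepsilon_n$. Going through $\hat H\le\hat J_n(u^\delta)$ does not help, since $\hat J_n(u^\delta)$ contains this very term. Its running part also needs a bound on $\hat{\mathcal{R}}_N$ that is not assumed.
\item \emph{Lower bound.} The inequality $J_{\lambda_n}(u_n)\le\hat H_n+\varepsilon_n+o(1)$ needs the samples defining $\hat H_n$ to be both those on which $u_n$ is $\varepsilon_n$-optimal and independent of $u_n$. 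This tension is already visible in the statement's fresh-samples clause. Consistency for a fixed $u$ or $\nu$ says nothing along the data-dependent sequence $u_n$.
\end{itemize}
The paper does not supply the missing idea either. It gets past both points only by asserting in \eqref{eq:6.LLN} that every deviation is $\le\varepsilon_n$ for $n\ge n_0(\omega)$ ``by \eqref{eq:4.cond}''. That does not follow from almost-sure convergence at an unquantified rate. It also applies Lemmas~\ref{lem:strong_consistency} and~\ref{lem:energy_consistency} to the laws $\mathcal{L}(X_T^{(n)})$ and controls $u_n$ via a countable union. This gives convergence for each fixed law, but not along the diagonal where the law changes with $n$.

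In fact the gap cannot be closed under the stated hypotheses. The infimum in \eqref{eq:4.2} is taken realization by realization over all of $\mathcal{A}$, so it can overfit. Take $c=0$, $d\ge2$ and $\mu_T$ atomless. Given the data, fix $t_1\in(0,T)$ and an optimal assignment $\pi$ between the uncontrolled endpoints $\tilde X_T^{(i)}:=X_0^{(i)}+\sigma B_T^{(i)}$ and the targets $Y^{(j)}$. For $d\ge2$ the curves $t\mapsto X_0^{(i)}+\sigma B_t^{(i)}+\frac{t-t_1}{T-t_1}\bigl(Y^{(\pi(i))}-\tilde X_T^{(i)}\bigr)$, $t\in[t_1,T]$, are a.s.\ pairwise disjoint. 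So one can take a bounded feedback, Lipschitz in $x$ uniformly in $t$, that vanishes on $[0,t_1)$ and outside thin disjoint tubes around these curves, and equals $(Y^{(\pi(i))}-\tilde X_T^{(i)})/(T-t_1)$ along the $i$-th curve. It belongs to $\mathcal{A}$ and drives the $i$-th realized path exactly onto $Y^{(\pi(i))}$.

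The terminal and target multisets then coincide, and \eqref{eq:3.1}--\eqref{eq:3.4} give $\hat\gamma^2_{M,N}=\frac{2\Phi(0)}{M(N-1)}\sum_{r=1}^M\bigl(N^{-2}(S_c^Y(Z_r)^2+S_s^Y(Z_r)^2)-1\bigr)\le0$. Meanwhile the empirical energy tends to $\frac{1}{2(T-t_1)}W_2^2(\mathcal{L}(X_0+\sigma B_T),\mu_T)$, where $W_2$ is the quadratic Wasserstein distance, whatever $\lambda_n$ and $M_n$ are. For $\mu_0=\delta_0$, $\sigma=T=1$ and $\mu_T=\mathcal{N}(0,s^2I_d)$ with $s\neq1$, this is close to $\frac{d}{2}(s-1)^2<\frac{d}{2}(s^2-1-2\log s)=H^*$ when $t_1$ is small. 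Hence $\hat H_{\lambda_n,0,M_n,N_n}$ cannot converge to $H^*$. Since such tube controls barely act on fresh paths, (i) is not to be expected either.

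What is missing is therefore not a sharper argument but extra hypotheses:
\begin{itemize}
\item a restricted control class with deviation bounds uniform in $u$, or a genuine train/evaluate split built into the definition of $\hat H$;
\item a coupling such as $\lambda_n(\sqrt{\log n/M_n}+\sqrt{\log n/N_n})\to0$ (cf.\ the Hoeffding bounds of Section~\ref{sec:6.3rate});
\item a bound on $\hat{\mathcal{R}}_N$.
\end{itemize}
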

\begin{proof}
See Section~\ref{sec:6.2gen}.
\end{proof}

The condition $\lambda_n\varepsilon_n\to 0$ requires the optimization tolerance $\varepsilon_n$
to vanish faster than the inverse penalty $1/\lambda_n$.
The rate condition $M_n/\log n\to\infty$ is mild
and is automatically satisfied by any practical choice such as $M_n$ polynomial in $n$.
It is used in Lemma~\ref{lem:strong_consistency} to ensure 
that the random fluctuation $\hat{\gamma}_{M_n,N_n}^2 - \gamma_K^2$ vanishes almost surely, 
which in turn underlies the almost-sure convergence claimed below.
Combined with $\lambda_n\varepsilon_n\to 0$, this ensures that the penalty fluctuation
is dominated by the penalty growth.

Theorem~\ref{thm:convergence} is established at the \emph{sample} level, directly for the empirical objective \eqref{eq:4.2} minimized by Algorithm~\ref{alg:sbp}, in contrast to the population-level analyses of \cite{nak:2024sb,sai-nak:2025},
which work with the exact $\gamma_K^2$ and the expected energy.
This reduction is enabled by Lemmas~\ref{lem:strong_consistency} and \ref{lem:energy_consistency}.

The almost-sure convergence in Theorem~\ref{thm:convergence} (i) can be sharpened into an explicit rate
by upgrading Lemma~\ref{lem:strong_consistency} to its quantitative Hoeffding form,
making the trade-off between the penalty schedule and the sample/feature counts explicit.

\begin{prop}[Rate of convergence, SBP case]\label{prop:rate}
Suppose the hypotheses of Corollary~$\ref{cor:sbp_special_case}$ hold. Suppose moreover that the following hold:
\begin{itemize}
\item[\textup{(H1)}] There exist $L\ge 0$ and an optimal control $u^*\in\mathcal{A}$ for \eqref{eq:4.1sbp}
such that $\|u^*\|_\infty\le L$ and $\|u_n\|_\infty\le L$ for every $n\ge 1$,
where $\|u\|_\infty:=\sup_{(t,x)\in[0,1]\times\RR^d}|u(t,x)|$.
\item[\textup{(H2)}] At each $n\ge 1$, the samples
$\{X^{(n,i)}\}_{i=1}^{N_n}$, $\{Y^{(n,j)}\}_{j=1}^{N_n}$, $\{Z_r^{(n)}\}_{r=1}^{M_n}$
appearing in \eqref{eq:4.2} are independent of the random elements used to construct $u_n$.
\end{itemize}
Then there exists a constant $C^*>0$, depending only on $\Phi(0)$ and $L$, such that, for all sufficiently large $n$, almost surely, the \emph{observable} empirical penalty satisfies
\begin{equation}\label{eq:4.rate_emp}
 \hat{\gamma}_{M_n,N_n}^2\bigl(\mathcal{L}(X_1^{(n)}),\mu_1\bigr)
 \;\le\; \frac{H^* + \varepsilon_n}{\lambda_n}
 \;+\; C^*\!\left(\sqrt{\frac{\log n}{M_n}} + \sqrt{\frac{\log n}{N_n}}\right),
\end{equation}
and consequently the population-level discrepancy satisfies
\begin{equation}\label{eq:4.rate}
 \gamma_K\bigl(\mathcal{L}(X_1^{(n)}),\mu_1\bigr)^2
 \;\le\; \frac{H^* + \varepsilon_n}{\lambda_n}
 \;+\; 2C^*\!\left(\sqrt{\frac{\log n}{M_n}} + \sqrt{\frac{\log n}{N_n}}\right).
\end{equation}
In particular, with the polynomial schedule $\lambda_n=n^a$, $M_n=N_n=n^b$, $\varepsilon_n=n^{-e}$
($a,b,e>0$ and $e\ge a$), the right-hand sides of \eqref{eq:4.rate_emp}--\eqref{eq:4.rate} are
both $O(n^{-\min(a,\,b/2)}(\log n)^{1/2})$, and the optimal balance $b=2a$ gives
\[
 \gamma_K\bigl(\mathcal{L}(X_1^{(n)}),\mu_1\bigr) = O\!\left(n^{-a/2}(\log n)^{1/4}\right)\quad\text{a.s.}
\]
\end{prop}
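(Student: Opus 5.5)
The proof compares the empirical objective at $u_n$ with the empirical objective at the bounded optimal control $u^*$ of (H1), and controls the random fluctuations of the RF $U$-statistic by Hoeffding-type inequalities and Borel--Cantelli. We never pass through a law of large numbers without a rate.

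\textbf{Step 1 (deterministic comparison).} Write $\hat J_n(u)$ for the empirical objective in \eqref{eq:4.2} with $c=0$, $T=1$, $\lambda=\lambda_n$. By $\varepsilon_n$-optimality and nonnegativity of the energy term,
\[
 \lambda_n\hat\gamma^2_{M_n,N_n}(u_n)\le \hat J_n(u_n)\le \hat H_{\lambda_n,0,M_n,N_n}+\varepsilon_n\le \hat J_n(u^*)+\varepsilon_n .
\]
Since $\mathcal{L}(X^{u^*}_1)=\mu_1$, we have $\hat J_n(u^*)=\hat E_n(u^*)+\lambda_n\hat\gamma^2_{M_n,N_n}(u^*)$. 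Here $\hat E_n(u^*)$ is the empirical energy, bounded by $L^2/2$ and with mean $H^*$. The term $\hat\gamma^2(u^*)$ is a centred estimator of $\gamma_K(\mu_1,\mu_1)^2=0$, by Theorem~\ref{thm:rfu_unbiased}(i). Dividing by $\lambda_n$ gives
\[
 \hat\gamma^2(u_n)\le \frac{H^*+\varepsilon_n}{\lambda_n}+\frac{\hat E_n(u^*)-H^*}{\lambda_n}+\hat\gamma^2(u^*).
\]

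\textbf{Step 2 (concentration).} Condition on the frequencies and write the estimator as an average over $r$ of bounded summands. Each $U_{XX}(Z_r)$, $V_{XY}(Z_r)$ and $U_{YY}(Z_r)$ lies in $[-1,1]$ up to an $O(1/N)$ correction, so each summand is bounded by a constant times $\Phi(0)$. Hoeffding in $r$, conditionally on the data, gives deviation of order $\Phi(0)\sqrt{\log n/M_n}$ with probability at least $1-2n^{-2}$.

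For the data fluctuation, for fixed $z$ the quantity $U_{XX}(z)-2V_{XY}(z)+U_{YY}(z)$ is a function of $2N$ independent samples. Changing one sample alters it by $O(1/N)$, so McDiarmid gives deviation of order $\sqrt{\log n/N_n}$ with probability at least $1-2n^{-2}$. Averaging over the $r$ index preserves the bounded-differences constants, so we can apply McDiarmid to the whole averaged quantity conditional on the $Z$'s. By (H2), the same argument applies at $u_n$, conditionally on the data that built $u_n$.

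The energy term is handled by Hoeffding, using $0\le\int_0^1|u^*|^2\le L^2$. This yields deviation $L^2\sqrt{\log n/N_n}$, and $\lambda_n\ge 1$ eventually lets us absorb it. Summing the failure probabilities $O(n^{-2})$ and applying Borel--Cantelli shows that, almost surely for all large $n$, $\hat\gamma^2(u^*)$ and $(\hat E_n(u^*)-H^*)/\lambda_n$ are each at most $C(\sqrt{\log n/M_n}+\sqrt{\log n/N_n})$. Here $C$ depends only on $\Phi(0)$ and $L$, and this gives \eqref{eq:4.rate_emp}.

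\textbf{Step 3 (population bound and schedule).} Applying the same two-sided concentration at $u_n$ gives $|\hat\gamma^2(u_n)-\gamma_K(\mathcal{L}(X_1^{(n)}),\mu_1)^2|\le C^*(\cdots)$ eventually a.s., which doubles the constant and yields \eqref{eq:4.rate}. Substituting $\lambda_n=n^a$, $M_n=N_n=n^b$ and $\varepsilon_n=n^{-e}$ with $e\ge a$ gives the rate $n^{-a}+n^{-b/2}\sqrt{\log n}$. With $b=2a$ this is $O(n^{-a}\sqrt{\log n})$, and taking square roots gives $\gamma_K=O(n^{-a/2}(\log n)^{1/4})$.

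The main obstacle is Step 2 at $u_n$. There $u_n$ is random, so the concentration must be applied conditionally, which (H2) permits. The $O(1/N)$-Lipschitz constant of the $U$-statistic also needs care: the $-N$ correction and the $N(N-1)$ normalisation must still give bounded differences of order $\Phi(0)/N$. The bound (H1) is used only for the energy term.
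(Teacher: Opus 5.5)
Your proof is correct and follows the paper's structure. The shared elements are the comparison of $u_n$ with the bounded constrained optimizer $u^*$ (so \eqref{eq:4.rate_emp} needs concentration only at the deterministic $u^*$), Hoeffding over the frequencies given the data, a $\sqrt{\log n/N_n}$ bound for the data fluctuation and for the energy, Borel--Cantelli, and (H2) to apply the two-sided bound at $u_n$ for \eqref{eq:4.rate}.

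The one real difference is the data-fluctuation lemma. The paper writes $\hat\gamma^2_{M_n,N_n}=\bar\gamma_K^2(D_{N_n})+B_{M_n}$, recognises $\bar\gamma_K^2(D)$ as a two-sample $U$-statistic of degree $(2,2)$ with kernel $h$ bounded by $4\Phi(0)$, and invokes Hoeffding's $U$-statistic inequality. You use McDiarmid instead. From the pairwise form $U_{XX}(z)=\frac{1}{N(N-1)}\sum_{i\neq j}\cos(z^{\mathsf T}(X_i-X_j))$, changing one sample moves $\Phi(0)[U_{XX}-2V_{XY}+U_{YY}]$ by at most $8\Phi(0)/N$. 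This gives the tail $2\exp(-Nt^2/(64\Phi(0)^2))$, so the $-N$ correction you worried about is harmless. McDiarmid is more elementary and avoids fitting the mixed $U$/$V$ cross term into a $U$-statistic template. The paper's split instead isolates the random-feature residual $B_{M_n}$ from the kernel-$U$-statistic error, which makes the $M=\infty$ reduction in Remark~\ref{rem:rate} immediate.

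One thing to tidy: as written, your two conditionings do not chain. Hoeffding in $r$ given the data centres the estimator at $\bar\gamma_K^2(D)=\EE[\hat\gamma^2_{M,N}\mid D]$. McDiarmid given the $Z_r$ centres it at $\frac{\Phi(0)}{M}\sum_r|\tilde\mu(Z_r)-\tilde\nu(Z_r)|^2$. There are two easy fixes:
\begin{itemize}
\item apply McDiarmid to $\bar\gamma_K^2(D)$ itself, since averaging over $z$ against $\rho/\Phi(0)$ preserves the same bounded differences; or
\item keep McDiarmid given the $Z_r$, and apply Hoeffding unconditionally to the i.i.d.\ summands $\Phi(0)|\tilde\mu(Z_r)-\tilde\nu(Z_r)|^2\in[0,4\Phi(0)]$, whose mean is $\gamma_K(\mu,\nu)^2$ by \eqref{eq:2.4}.
\end{itemize}
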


\begin{proof}
See Section~\ref{sec:6.3rate}.
\end{proof}

\begin{rem}\label{rem:rate}
The two bounds in Proposition~\ref{prop:rate} are complementary:
\eqref{eq:4.rate_emp} is verifiable from data
(computed on a fresh evaluation batch in Section~\ref{sec:5}),
while \eqref{eq:4.rate} is the theoretical guarantee on the unobservable distributional distance $\gamma_K(\mathcal{L}(X_1^{(n)}),\mu_1)$.
Both decompose the error into the penalty bias $H^*/\lambda_n$,
the optimization tolerance $\varepsilon_n/\lambda_n$,
and the sample/feature concentration $\sqrt{\log n/M_n}+\sqrt{\log n/N_n}$
(the latter reduces to $\sqrt{\log n/N_n}$ in the kernel $U$-statistic limit of Corollary~\ref{cor:kernel_convergence}).
Hypothesis (H1) is automatic for drift networks with bounded output activations or weight clipping,
and can be relaxed to a uniform $L^4$-moment bound at the cost of using Bernstein's inequality;
(H2) holds whenever \eqref{eq:4.2} is evaluated on a fresh batch.
\end{rem}

\begin{prop}[Rate of convergence for the potential MFG]\label{prop:rate_mfg}
Suppose the hypotheses of Theorem~\ref{thm:convergence} hold,
together with the analogues of (H1)--(H2) for the MFG horizon $[0,T]$,
namely $\|u^*\|_\infty\le L$ and $\|u_n\|_\infty\le L$ for every $n\ge 1$ with $\|u\|_\infty:=\sup_{(t,x)\in[0,T]\times\RR^d}|u(t,x)|$,
and at each $n\ge 1$ the samples $\{X^{(n,i)}\}_{i=1}^{N_n}$, $\{Y^{(n,j)}\}_{j=1}^{N_n}$,
$\{Z_r^{(n)}\}_{r=1}^{M_n}$, $\{\tilde Z_r^{(n)}\}_{r=1}^{M_n}$ appearing in \eqref{eq:4.2}
are independent of the random elements used to construct $u_n$.
Then there exists a constant $C^*_c>0$, depending on $\Phi(0),\Psi(0),L,c,T$ only, such that
\begin{equation}\label{eq:4.rate_mfg}
 \gamma_K\bigl(\mathcal{L}(X_T^{(n)}),\mu_T\bigr)^2
 \;\le\; \frac{H^*_c + \varepsilon_n}{\lambda_n}
 \;+\; C^*_c\!\left(\sqrt{\frac{\log n}{M_n}} + \sqrt{\frac{\log n}{N_n}}\right)
\end{equation}
for all sufficiently large $n$, almost surely;
in particular, under the polynomial schedule of Proposition~\ref{prop:rate},
$\gamma_K(\mathcal{L}(X_T^{(n)}),\mu_T)=O(n^{-a/2}(\log n)^{1/4})$ a.s.
\end{prop}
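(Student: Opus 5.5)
The plan is to follow the proof of Proposition~\ref{prop:rate} and add only the estimates needed for the running interaction term. The argument has four steps.

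\textbf{Step 1: upper bound on the empirical objective.} Evaluate the empirical objective \eqref{eq:4.2} at $\lambda=\lambda_n$ on the bounded optimal control $u^*$ of the hard-constraint problem defining $H^*_c$. Since $\mathcal{L}(X^{u^*}_T)=\mu_T$, the population terminal penalty vanishes. Hence the empirical objective at $u^*$ equals
\[
H^*_c+\Delta_n^{E}+c\,\Delta_n^{R}+\lambda_n\hat\gamma^2_{M_n,N_n}\bigl(\hat\nu_T^{u^*},\mu_T\bigr).
\]
Here $\Delta_n^E$ is the fluctuation of the empirical energy; it is a mean of i.i.d.\ variables bounded by $L^2T/2$. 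The term $\Delta_n^R$ is the fluctuation of $\int_0^T\hat{\mathcal{R}}_{M_n,N_n}\,dt$ around $\int_0^T\mathcal{R}\,dt$. By $\varepsilon_n$-optimality of $u_n$,
\begin{equation*}
\tfrac{1}{2N_n}\textstyle\sum_i\int_0^T|u_n|^2dt + c\int_0^T\hat{\mathcal{R}}_{M_n,N_n}[\hat\nu^{N_n}_t]\,dt+\lambda_n\hat{\mathcal{T}}_{M_n,N_n}
\le H^*_c+\varepsilon_n+|\Delta_n^E|+c|\Delta_n^R|+\lambda_n\hat\gamma^2_{M_n,N_n}\bigl(\hat\nu_T^{u^*},\mu_T\bigr).
\end{equation*}
Both $\hat{\mathcal{R}}_{M,N}$ and the energy term can be negative only through sampling error. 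Since $\mathcal{R}\ge0$, I will bound their negative parts by the same concentration quantity. Dividing by $\lambda_n$ then isolates $\hat\gamma^2_{M_n,N_n}(\hat\nu_T^{u_n},\mu_T)$.

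\textbf{Step 2: Hoeffding-type concentration.} Use the concentration inequalities from the proof of Proposition~\ref{prop:rate}, with (H2) ensuring independence from $u_n$. Conditionally on the data, $\hat\gamma^2$ is an average of $M_n$ i.i.d.\ terms bounded by $4\Phi(0)$, so Hoeffding's inequality in $Z$ applies. Conditionally on $Z$, the $U$-statistic part is handled by Hoeffding's inequality for $U$-statistics (or McDiarmid, with bounded differences $O(\Phi(0)/N)$). Together these give deviations of order
\[
C\bigl(\sqrt{\log n/M_n}+\sqrt{\log n/N_n}\bigr)
\]
with probability at least $1-O(n^{-2})$.

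The same argument applies to $\hat{\mathcal{R}}_{M_n,N_n}[\hat\nu_t^{N_n}]$ at each fixed $t$, with constants depending on $\Psi(0)$. For the time integral I would not apply a union bound over $t$. Instead, note that $\int_0^T\hat{\mathcal{R}}\,dt$ is itself an average over $\tilde Z_r$ of terms bounded by $T\Psi(0)$, and is a function of the $N_n$ independent paths with bounded differences $O(T\Psi(0)/N_n)$. Applying McDiarmid and Hoeffding directly to the integrated quantity gives the same rate.

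\textbf{Step 3: Borel--Cantelli.} Applying Borel--Cantelli to all these events (for both $u^*$ and $u_n$) yields that a.s., for all large $n$, every fluctuation is at most $C(\sqrt{\log n/M_n}+\sqrt{\log n/N_n})$. This holds in particular for the terminal fluctuation at $u^*$, where the mean is $0$. The fluctuations at $u^*$ are multiplied by $\lambda_n$ on the right-hand side, but after dividing by $\lambda_n$ they return at the rate itself. The remaining energy and interaction fluctuations acquire a factor $1/\lambda_n\le1$ and are absorbed into the constant.

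\textbf{Step 4: pass to the population discrepancy.} Finally, replace $\hat\gamma^2(\hat\nu_T^{u_n},\mu_T)$ by $\gamma_K(\mathcal{L}(X_T^{(n)}),\mu_T)^2$ using one more deviation bound of the same size. This produces \eqref{eq:4.rate_mfg} with $C^*_c$ depending on $\Phi(0),\Psi(0),L,c,T$. The polynomial-schedule claim then follows exactly as in Proposition~\ref{prop:rate}.

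\textbf{Main obstacle.} I expect the main obstacle to be the uniform-in-time control of the interaction fluctuation $\Delta_n^R$. The plan is to treat the time integral as a single bounded-difference functional of the i.i.d.\ paths and the frequencies, rather than controlling each $t$ separately. A secondary care point is that $u_n$ is data-dependent; this is exactly what (H2) neutralises, by making the evaluation samples conditionally i.i.d.\ given $u_n$.
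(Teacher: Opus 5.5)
Your proposal is correct and follows the paper's overall architecture. You compare with the bounded hard-constraint optimizer $u^*$ through $\varepsilon_n$-optimality, control each fluctuation with probability $1-O(n^{-2})$ by Hoeffding-type bounds, use (H2) to freeze $u_n$ and apply Borel--Cantelli, divide by $\lambda_n$, and recover the population discrepancy with one more deviation bound.

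The genuine difference is in how you handle the running cost. The paper decomposes $\hat{\mathcal{R}}_{M_n,N_n}[\hat\nu^{N_n}_t]=\bar{\mathcal{R}}_W(D_{X_t,N_n})+B^W_{M_n,t}$ at each fixed $t$. It obtains \eqref{eq:6.R_rate_mfg} pointwise, takes a union over the finitely many grid points $t_k$ of Algorithm~\ref{alg:sbp}, and multiplies by $T$. This reuses the fixed-time decomposition \eqref{eq:6.tower_W} verbatim. What it actually controls, however, is the Riemann sum $\sum_k\hat{\mathcal{R}}_{M_n,N_n}[\hat\nu^{N_n}_{t_k}](t_k-t_{k-1})$ against $\sum_k\mathcal{R}[\mathcal{L}(X_{t_k})](t_k-t_{k-1})$. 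With $q$ fixed, comparing the latter to $\int_0^T\mathcal{R}[\mathcal{L}(X_t)]\,dt$ leaves a grid error that does not shrink with $n$. And for the exact time integral in \eqref{eq:4.2}, a fixed-$t$ almost-sure bound does not integrate over a continuum of times, because its null set and threshold depend on $t$.

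Your device avoids both issues. You treat $\int_0^T\hat{\mathcal{R}}_{M_n,N_n}[\hat\nu^{N_n}_t]\,dt$ as an average over $\tilde Z_r$ of terms bounded by $T\Psi(0)/2$. Given the frequencies, it is also a function of the $N_n$ i.i.d.\ path-valued samples with bounded differences $2T\Psi(0)/N_n$. Equivalently, its conditional mean is a one-sample $U$-statistic on $C([0,T];\RR^d)$ with kernel $(\omega,\omega')\mapsto\frac{1}{2}\int_0^T W(\omega(t),\omega'(t))\,dt$. This yields a single event per $n$ and controls the integral in \eqref{eq:4.2} directly at no extra cost, so yours is the more robust of the two routes.

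Two minor points. First, the empirical energy is nonnegative pathwise, not merely up to sampling error, so only the interaction term at $u_n$ needs a lower bound. Second, that lower bound is even deterministic: $U_{XX}(z)\ge -1/(N-1)$ gives $\hat{\mathcal{R}}_{M,N}\ge-\Psi(0)/(2(N-1))$. This is precisely the justification, up to an $O(T\Psi(0)/N_n)$ error, for discarding $c\int_0^T\hat{\mathcal{R}}\,dt$ at $u_n$, which the paper's ``dropping the non-negative terms'' passes over.
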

\begin{proof}
See Section~\ref{sec:6.3rate_mfg}.
\end{proof}

The limit $M\to\infty$ in Theorem~\ref{thm:convergence} corresponds to replacing the random Fourier
$U$-statistic by the kernel $U$-statistic estimator $\bar{\gamma}_K^2(D)$
(cf.~\eqref{eq:empirical_mmd} and the proof of Theorem~\ref{thm:rfu_unbiased}(iii) in Section~\ref{sec:6.1}),
evaluated on the controlled-path terminal samples $X_1^{(1)},\ldots,X_1^{(N)}$ and target i.i.d.\ samples $Y^{(1)},\ldots,Y^{(N)}\sim\mu_T$:
\begin{equation}\label{eq:4.kernelUstat}
 \bar{\gamma}_K^2(D)
 = \frac{1}{N(N-1)}\sum_{i\neq j}K(X_1^{(i)},X_1^{(j)})
   - \frac{2}{N^2}\sum_{i,j}K(X_1^{(i)},Y^{(j)})
   + \frac{1}{N(N-1)}\sum_{i\neq j}K(Y^{(i)},Y^{(j)}).
\end{equation}
This is the population MMD${}^2$ estimator used in \cite{nak:2024sb}.
The corresponding empirical penalty problem is
\begin{equation}\label{eq:4.kernel_pen}
 \hat{H}_{\lambda,N}^{\mathrm{ker}}
 := \frac{1}{2}\inf_{u\in\mathcal{A}}\!\left\{
 \frac{1}{N}\sum_{i=1}^N\!\int_0^1|u(t,X_t^{(i)})|^2 dt
 + \lambda\,\bar{\gamma}_K^2(D)\right\}.
\end{equation}

The almost-sure convergence of $\hat{H}_{\lambda_n,N_n}^{\mathrm{ker}}$ to $H^*$ then follows from Theorem~\ref{thm:convergence}, without any rate condition on the random-feature count.
Formally, the kernel-$U$-statistic penalty problem \eqref{eq:4.kernel_pen}
is the $M=\infty$ limit of the random Fourier penalty problem \eqref{eq:4.2}
in the sense that $\hat{\gamma}_{M,N}^2\to \bar{\gamma}_K^2(D)$ a.s.\ as $M\to\infty$ for fixed $N$
(by the conditional tower decomposition \eqref{eq:6.tower}; see Remark~\ref{rem:6.var}),
so the next corollary can be read as the limiting case of Theorem~\ref{thm:convergence}
with the random-feature ingredient replaced by its expectation:

\begin{cor}\label{cor:kernel_convergence}
Suppose that $\mu_0$ and $\mu_1$ have finite second moments and that $(A)$ holds.
Let $\{\lambda_n\}$, $\{N_n\}$, and $\{\varepsilon_n\}$ be sequences satisfying
\[
 \lambda_n\to\infty,\quad N_n\to\infty,\quad \varepsilon_n\to 0,\quad \text{and}\quad \lambda_n\varepsilon_n\to 0.
\]
For each $n$, let $u_n\in\mathcal{A}$ be an $\varepsilon_n$-optimal admissible control for the kernel $U$-statistic
penalty problem \eqref{eq:4.kernel_pen} with $\lambda=\lambda_n$ and $N=N_n$,
and let $X^{(n)}$ denote the corresponding controlled process.
Then, almost surely,
\begin{enumerate}[label=\textup{(\roman*)}]
\item $\displaystyle\lim_{n\to\infty}\sqrt{\lambda_n}\,\gamma_K(\mathcal{L}(X_1^{(n)}),\mu_1)=0$,
\item $\displaystyle\lim_{n\to\infty}\hat{H}_{\lambda_n,N_n}^{\mathrm{ker}} = H^*$.
\end{enumerate}
\end{cor}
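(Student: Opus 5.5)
The plan is to rerun the argument of Theorem~\ref{thm:convergence} with $c=0$ and $T=1$, replacing the random Fourier estimator $\hat\gamma^2_{M,N}$ by the kernel $U$-statistic $\bar\gamma_K^2(D)$. The only place the random-feature count entered that argument was the strong consistency of the penalty estimator (Lemma~\ref{lem:strong_consistency}). For the kernel $U$-statistic this consistency follows directly from the strong law of large numbers for $U$-statistics (Hoeffding), because $K=\Phi(\cdot-\cdot)$ is bounded. The three terms of \eqref{eq:4.kernelUstat} are, respectively, a one-sample $U$-statistic, a two-sample $V$-type average of independent pairs, and a one-sample $U$-statistic, each with bounded kernel. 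A short route to the almost-sure statement along an arbitrary deterministic sequence $N_n\to\infty$ is to apply McDiarmid's bounded-difference inequality. Changing one $X^{(i)}$ or $Y^{(j)}$ alters $\bar\gamma_K^2(D)$ by at most $C\Phi(0)/N$, so
\[
\PP\bigl(|\bar\gamma_K^2(D)-\gamma_K^2|>\delta\bigr)\le 2e^{-cN\delta^2}.
\]
This is summable once $N_n/\log n\to\infty$. Since no such rate is assumed, I would instead follow whatever device the proof of Lemma~\ref{lem:strong_consistency} uses for its $N$-part; it needs only $N_n\to\infty$, and presumably relies on reverse-martingale or Hoeffding SLLN arguments for fixed laws. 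I would cite that part verbatim. Independence of $D$ from $u_n$ plays the role of (H2).

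With consistency in hand, the two directions go as follows. \emph{Upper bound:} fix $\eta>0$ and choose an admissible $\bar u$ with $\mathcal{L}(\bar X_1)=\mu_1$ and energy at most $H^*+\eta$ (bounded Lipschitz approximants if necessary). Evaluate \eqref{eq:4.kernel_pen} at $\bar u$. By Lemma~\ref{lem:energy_consistency} the empirical energy converges to the true energy. The penalty $\lambda_n\bar\gamma_K^2(D)$ has zero mean when $\mathcal{L}(\bar X_1)=\mu_1$, but it is multiplied by $\lambda_n\to\infty$, so here I need $\lambda_n\bar\gamma_K^2\to0$ or at least $\limsup\le0$ a.s. This is the main obstacle, exactly as in Theorem~\ref{thm:convergence}. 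I would handle it in the same way that proof does: either take $\bar u$ from the penalized problem at a fixed $\lambda$ and let $\lambda\to\infty$ afterwards in a diagonal argument, or use a degenerate-$U$-statistic concentration showing $N\bar\gamma_K^2$ is tight under the null, so that $\lambda_n\bar\gamma_K^2$ is handled whenever the same coupling used there is available. This yields $\limsup_n \hat H^{\mathrm{ker}}_{\lambda_n,N_n}\le H^*$ a.s.

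\emph{Lower bound and (i):} $\varepsilon_n$-optimality gives
\[
\lambda_n\bar\gamma_K^2(D_n)\le \hat H^{\mathrm{ker}}_{\lambda_n,N_n}+\varepsilon_n,
\]
which is bounded. Hence $\bar\gamma_K^2(D_n)=O(1/\lambda_n)$, and consistency transfers this to $\gamma_K(\mathcal{L}(X^{(n)}_1),\mu_1)^2\to0$. The sharper statement (i), with the factor $\sqrt{\lambda_n}$, then follows by comparing with the value: write the objective of $u_n$ as its energy plus $\lambda_n\gamma_K^2$, and note that its energy is asymptotically at least $H^*$. That lower bound comes from lower semicontinuity of the energy, via relative entropy with respect to Wiener measure, along a weakly convergent subsequence whose terminal law must be $\mu_1$ by Remark~\ref{rem:2.1}. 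The squeeze then gives both (ii) and $\lambda_n\gamma_K^2\to0$, with $\lambda_n\varepsilon_n\to0$ used to absorb the tolerance.
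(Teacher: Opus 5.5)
Your outline is the paper's own: rerun the proof of Theorem~\ref{thm:convergence} with $c=0$, $T=1$ and $\bar\gamma_K^2(D)$ in place of $\hat\gamma_{M,N}^2$, with the $U$-statistic strong law as the only new input. However, the step you call the main obstacle is left open, and ``handling it the way that proof does'' will not close it.

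The paper's proof simply asserts $\bar\gamma_K^2(D_{N_n})|_{\nu=\mu_1}\le\varepsilon_n$ for all large $n$, as a consequence of strong consistency. That inference is invalid. Almost-sure convergence carries no rate, and under the null $\bar\gamma_K^2$ is a degenerate $U$-statistic of exact order $1/N_n$ that takes both signs. So $\lambda_n\bar\gamma_K^2$ is of order $\lambda_n/N_n$, and nothing in the hypotheses makes that small.

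Neither of your fixes works as stated. A control whose terminal law differs from $\mu_1$ pays $\lambda_n\gamma_K^2\to\infty$, so the fixed-$\lambda$ diagonal route fails. The degenerate-$U$ bound needs an added coupling such as $\lambda_n\log n/N_n\to 0$. The same scale issue recurs in your squeeze for (i), where $\lambda_n\bar\gamma_K^2$ is traded for $\lambda_n\gamma_K^2$. Your McDiarmid hesitation is also justified: since $\mathcal{L}(X_1^{(n)})$ changes with $n$, a fixed-law SLLN does not apply, and a law-uniform bound needs something like $N_n/\log n\to\infty$.

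The decisive gap, again shared with the paper, is in the lower bound. You apply fixed-control consistency (Lemma~\ref{lem:energy_consistency} and the $U$-statistic law) to $u_n$ on the very sample it was optimized on. ``Independence of $D$ from $u_n$'' is false for the data in the $\varepsilon_n$-optimality inequality. Because the infimum in \eqref{eq:4.kernel_pen} is taken realization by realization over all of $\mathcal{A}$, this is fatal, as the following construction shows.

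Take $d\ge 2$ and an optimal matching $\pi$ between $\{X_0^{(i)}+\sigma B_1^{(i)}\}$ and $\{Y^{(j)}\}$. The curves $X_0^{(i)}+\sigma B_t^{(i)}+\frac{(t-\eta)^+}{1-\eta}\bigl(Y^{(\pi(i))}-X_0^{(i)}-\sigma B_1^{(i)}\bigr)$ are a.s.\ pairwise disjoint on $[\eta,1]$. Hence a bounded feedback, Lipschitz in $x$ and supported in thin tubes around these curves, is admissible and steers the $i$-th sample path exactly to $Y^{(\pi(i))}$. At this control
\[
 \bar\gamma_K^2(D)=\frac{2}{N}\Bigl(\frac{1}{N(N-1)}\sum_{i\neq j}K(Y^{(i)},Y^{(j)})-\Phi(0)\Bigr)\le 0,
\]
and the empirical energy is $(1-\eta)^{-1}$ times the empirical matching cost.

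Hence $\hat H^{\mathrm{ker}}_{\lambda_n,N_n}$ is asymptotically at most $\frac12W_2^2\bigl(\mu_0*\mathcal{N}(0,\sigma^2I_d),\mu_1\bigr)$, with $W_2$ the Wasserstein distance. It tends to $-\infty$ if $\lambda_n/N_n\to\infty$ and $\mu_1$ is not a point mass. This bound never exceeds $H^*$, since any feasible $u$ couples $X_0+\sigma B_1$ with $X_1\sim\mu_1$ at cost at most $\mathcal{C}(u)$. It is typically strictly smaller: for $\mu_0=\delta_0$ and $\mu_1=\mathcal{N}(m,I_d)$ one gets $H^*-\frac12W_2^2=d\sigma(1-\sigma+\sigma\log\sigma)>0$ whenever $\sigma\neq 1$.

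So (ii) as formulated is false in this example. A correct version needs a restricted control class admitting a uniform law of large numbers, or separate optimization and evaluation samples. It also needs a coupling between $\lambda_n$ and $N_n$.
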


\begin{proof}
See Section~\ref{sec:6.4ker}.
\end{proof}

\begin{rem}\label{rem:cor_nak}
Corollary~\ref{cor:kernel_convergence} closes the sample-level gap left open by \cite{nak:2024sb},
whose original analysis works with the exact $\gamma_K^2$ and the expected energy
even though the implemented algorithm uses their empirical counterparts.
The same construction extends to the Monge case of \cite{sai-nak:2025};
see Remark~\ref{rem:4.1}.
\end{rem}

\begin{rem}\label{rem:4.1}
An analogous convergence result holds for the Monge problem with $c(x,y)=|x-y|^2$
under the same conditions \eqref{eq:4.cond}.
In that case, the conclusions become:
(i) $\sqrt{\lambda_n}\gamma_K(\mu\circ T_n^{-1},\nu)\to 0$, and
(ii) $\int |x-T_n(x)|^2 \mu(dx) \to \int |x - T^*(x)|^2 \mu(dx)$,
where $T^*$ is the optimal transport map.
See \cite[Theorem~2.1]{sai-nak:2025} for the population-level result and
the analogue of Corollary~\ref{cor:kernel_convergence} for its sample-based counterpart.
\end{rem}

\subsection{Algorithm}\label{sec:4.3alg}

The empirical mean-field game problem \eqref{eq:4.2} is implemented by
parametrizing the drift $u_\theta$ of the controlled SDE \eqref{eq:4.0} as a neural network
and minimizing the empirical objective by stochastic gradient descent;
see Algorithm~\ref{alg:sbp}.
At each iteration the empirical objective is evaluated by combining
the unbiased $O(NM)$ estimators of $\gamma_K^2$ (Theorem~\ref{thm:rfu_unbiased})
and of $\mathcal{R}[\nu_t]$ (Theorem~\ref{thm:rfu_interaction}).
The Schr{\"o}dinger-bridge specialization $c=0$ recovers the algorithm of \cite{nak:2024sb}
with the $O(N^2)$ kernel $U$-statistic replaced by the $O(NM)$ RF $U$-statistic.

\begin{algorithm}
\caption{Potential MFG with kernel costs via RF $U$-statistic estimators}
\label{alg:sbp}
\begin{algorithmic}[1]
\Input Number $n$ of iterations, batch size $N$, random-feature count $M$,
terminal penalty $\lambda>0$, congestion weight $c\ge 0$,
terminal kernel bandwidth $\alpha_K$, congestion kernel bandwidth $\alpha_W$,
diffusion coefficient $\sigma>0$, time horizon $T>0$,
time grid $0=t_0<t_1<\cdots<t_q=T$ with $q\in\mathbb{N}$ grid points.
\Output Neural-network drift $u_\theta$
\State Initialize $\theta$
\For{$\ell=1,2,\ldots,n$}
      \State $\{Y_j\}_{j=1}^N \gets$ i.i.d.\ samples from $\mu_T$
      \State $\{Z_r\}_{r=1}^M \gets$ i.i.d.\ samples from $\rho_K/\Phi(0)$, $\{\tilde Z_r\}_{r=1}^M \gets$ i.i.d.\ from $\rho_W/\Psi(0)$
      \State Simulate $N$ independent paths $\{X_t^{(i)}\}_{t\in[0,T]}$, $i=1,\ldots,N$, via Euler--Maruyama
      \State $\hat{\mathcal{C}}(\theta) \gets \frac{1}{N}\sum_{i=1}^N \int_0^T|u_\theta(t,X_t^{(i)})|^2dt$
      \Comment{empirical energy}
      \State $\hat{\gamma}_{M,N}^2 \gets$ \eqref{eq:3.4} using $\{X_T^{(i)}\}_{i=1}^N$, $\{Y_j\}$, $\{Z_r\}$
      \Comment{terminal MMD$^2$ penalty}
      \If{$c>0$}
        \State $\hat{\mathcal{R}}_{\mathrm{tot}}(\theta) \gets \sum_{k=1}^{q}\hat{\mathcal{R}}_{M,N}[\hat\nu^N_{t_k}]\,(t_k - t_{k-1})$
        \Comment{time-integrated empirical interaction via \eqref{eq:3.5}}
      \Else
        \State $\hat{\mathcal{R}}_{\mathrm{tot}}(\theta) \gets 0$
      \EndIf
      \State $F(\theta) \gets \frac{1}{\lambda}\hat{\mathcal{C}}(\theta) + \frac{c}{\lambda}\hat{\mathcal{R}}_{\mathrm{tot}}(\theta) + \hat{\gamma}_{M,N}^2$
      \Comment{equivalent up to scaling to $\hat{\mathcal{C}}+c\hat{\mathcal{R}}_{\mathrm{tot}}+\lambda\hat{\gamma}_{M,N}^2$}
      \State Take gradient step on $\nabla_\theta F(\theta)$
\EndFor
\end{algorithmic}
\end{algorithm}

\begin{rem}\label{rem:3.interaction}
The interaction estimator \eqref{eq:3.5} is computed on a \emph{single} batch of $N$ controlled-path samples,
in contrast to the terminal estimator \eqref{eq:3.4} which requires both controlled-path and target samples.
In Algorithm~\ref{alg:sbp}, the same batch $\{X^{(i)}\}_{i=1}^N$
is used to evaluate $\hat{\mathcal{R}}_{M,N}[\hat\nu^N_{t_k}]$ at each time step $t_k$ along the trajectory
and to evaluate $\hat\gamma_{M,N}^2(\hat\nu^N_T,\mu_T)$ at the terminal time;
the target samples $\{Y_j\}_{j=1}^N$ from $\mu_T$ and the two independent collections of random frequencies
$\{Z_r\}$ for $K$ and $\{\tilde Z_r\}$ for $W$ enter only through the terminal and interaction penalties, respectively.
\end{rem}

\begin{rem}\label{rem:3.3}
At each iteration, the random frequencies $Z_1,\ldots,Z_M$, the path noise driving the $N$ Euler--Maruyama trajectories,
and the target samples $Y_1,\ldots,Y_N$ are drawn independently and afresh.
This ``re-sampling'' strategy ensures that, conditional on the current parameter $\theta$,
the stochastic gradient of $F(\theta)$ is an unbiased estimator of the population gradient
(by Theorem~\ref{thm:rfu_unbiased}(i)),
and it provides the sample-independence used in the convergence analysis above
(see hypothesis (H2) of Proposition~\ref{prop:rate}).
The $U$-statistic structure of $\hat{\gamma}_{M,N}^2$ does not require splitting the $N$ paths into sub-batches:
the diagonal-removing decomposition \eqref{eq:3.2}--\eqref{eq:3.3} already eliminates the biasing self-terms,
so all $N$ paths participate jointly as the $X$-sample of the estimator.
\end{rem}

\begin{rem}\label{rem:3.4}
The same estimator applies to the Monge optimal transport problem studied in \cite{sai-nak:2025},
as well as to its multi-marginal extension in \cite{nak-sai:2026}.
In the Monge setting, the SDE is replaced by a deterministic transport map $T_\theta$,
and the objective becomes $\frac{1}{\lambda N}\sum_i c(X_i,T_\theta(X_i)) + \hat{\gamma}_{M,N}^2$
with $\hat{\gamma}_{M,N}^2$ computed from $\{T_\theta(X_i)\}$ and target samples $\{Y_j\}$.
\end{rem}

\section{Numerical experiments}\label{sec:5}

All numerical experiments are implemented in PyTorch.
The full source code, together with scripts that reproduce every figure and table of this section,
is available at \url{https://github.com/yumiharu-nakano/kernelMFG-RFU}.
The Gaussian kernel $K(x,y)=e^{-\alpha|x-y|^2}$ is used throughout for the terminal MMD penalty,
and the congestion kernel $W(x,y)=e^{-\alpha_W|x-y|^2}$ (same family) for the running interaction cost
in the EV experiments of Section~\ref{sec:5ev}.
The bandwidth $\alpha$ is specified in each subsection;
for the high-dimensional Schr{\"o}dinger-bridge experiments we adopt the standard scaling $\alpha=1/d$,
while for the low-dimensional ($d=2$) estimator-property and bimodal experiments we use $\alpha=1$.

We organize the experiments as follows.
Sections~\ref{sec:est_prop}--\ref{sec:5.5comp} treat the Schr{\"o}dinger-bridge specialization
($c=0$ in the potential MFG \eqref{eq:4.1mfg}, Corollary~\ref{cor:sbp_special_case})
as a controlled warm-up of the framework that
verifies the estimator and convergence theorems
and demonstrates the $O(NM)$ computational scaling.
Section~\ref{sec:5ev} then presents the EV charging fleet potential MFG ($c>0$),
the main MFG application of the paper.

\subsection{Estimator properties}\label{sec:est_prop}

We first verify the two key properties of the proposed RF $U$-statistic estimator
established in Theorem~\ref{thm:rfu_unbiased}: unbiasedness and the variance bound
$\mathrm{Var}(\hat{\gamma}_{M,N}^2) = O(1/M)+O(1/N)$.

\subsubsection{Bias comparison}\label{sec:5.1}

Before presenting the SBP experiments, we verify the unbiasedness of the proposed estimator.
We compare three estimators of $\gamma_K(\mu,\nu)^2$:
(i) the kernel $U$-statistic \eqref{eq:empirical_mmd} with $O(N^2)$ complexity,
(ii) the standard RFF $V$-statistic (Rahimi--Recht \cite{rah-rec:2007}) with $O(NM)$ complexity, and
(iii) the proposed RF $U$-statistic \eqref{eq:3.4} with $O(NM)$ complexity.
For $d=2$, $N=200$, $M=200$, and $\alpha=1$,
we compute each estimator $2000$ times with $\mu=\nu=\mathcal{N}(0,I_2)$ so that $\gamma_K^2=0$.

\begin{table}[htbp]
\centering
\caption{Bias comparison with $\mu=\nu=\mathcal{N}(0,I_2)$ (true $\gamma_K^2=0$), $2000$ trials.}
\label{tab:bias_same}
\begin{tabular}{lcc}
\toprule
Estimator & Mean & Std \\
\midrule
Kernel $U$-stat \eqref{eq:empirical_mmd} & $-3\times 10^{-5}$ & $3.3\times 10^{-3}$ \\
RFF $V$-stat (Rahimi--Recht) & $+8.0\times 10^{-3}$ & $3.3\times 10^{-3}$ \\
RF $U$-stat (proposed) \eqref{eq:3.4} & $-2\times 10^{-5}$ & $3.3\times 10^{-3}$ \\
\bottomrule
\end{tabular}
\end{table}

The kernel $U$-statistic and the proposed RF $U$-statistic both have sample means close to zero,
confirming their unbiasedness.
In contrast, the RFF $V$-statistic exhibits a positive bias of approximately $+8.0\times 10^{-3}$,
consistent with the $O(\Phi(0)/N)=O(1/200)$ prediction of Remark \ref{rem:3.2}.
At the population level this bias produces a non-vanishing penalty floor even when $\mathcal{L}(X_1)=\mu_1$ exactly,
which complicates the limit-theorem analysis as $\lambda\to\infty$
(cf.\ Remark~\ref{rem:3.2}).

\subsubsection{Variance scaling}\label{sec:5.1var}

We numerically verify the variance decomposition of Theorem~\ref{thm:rfu_unbiased}(iii),
namely $\mathrm{Var}(\hat{\gamma}_{M,N}^2)=O(1/M)+O(1/N)$,
where the $O(1/M)$ term arises from the random Fourier averaging
and the $O(1/N)$ term is the irreducible kernel $U$-statistic variance
identified in the decomposition \eqref{eq:6.tower}.

We take $\mu=\mathcal{N}(0,I_d)$ and $\nu=\mathcal{N}(m,I_d)$
with $m=(1,0,\ldots,0)^{\mathsf{T}}\in\RR^{10}$ (so the alternative case $\gamma_K^2>0$),
$d=10$, and $\alpha=1/d$.
For each $(M,N)$ in the grid $M\in\{50,100,200,500,1000,2000,5000\}$, $N\in\{50,100,200,500,1000\}$,
we compute the estimator $\hat{\gamma}_{M,N}^2$ over $T=2000$ independent trials
and report the sample variance.

\begin{table}[htbp]
\centering
\caption{Sample variance of $\hat{\gamma}_{M,N}^2$ ($\times 10^{-4}$) on a grid of $(M,N)$.
$d=10$, $\alpha=1/d$, $\mu=\mathcal{N}(0,I)$, $\nu=\mathcal{N}(m,I)$ with $m_1=1$,
over $2000$ trials per cell.
The sample mean is constant at $0.0256\pm 0.0003$ across all cells (true $\gamma_K^2\approx 0.0256$),
confirming unbiasedness.}
\label{tab:variance_scaling}
\begin{tabular}{c|ccccc}
\toprule
$M\backslash N$ & $50$ & $100$ & $200$ & $500$ & $1000$ \\
\midrule
$50$   & $2.21$ & $1.20$ & $0.69$ & $0.38$ & $0.29$ \\
$100$  & $1.88$ & $0.89$ & $0.48$ & $0.24$ & $0.19$ \\
$200$  & $1.67$ & $0.73$ & $0.40$ & $0.19$ & $0.12$ \\
$500$  & $1.48$ & $0.67$ & $0.34$ & $0.14$ & $0.08$ \\
$1000$ & $1.53$ & $0.69$ & $0.32$ & $0.13$ & $0.07$ \\
$2000$ & $1.55$ & $0.66$ & $0.31$ & $0.13$ & $0.07$ \\
$5000$ & $1.49$ & $0.64$ & $0.32$ & $0.12$ & $0.06$ \\
\bottomrule
\end{tabular}
\end{table}

A non-negative least-squares fit of the model $\mathrm{Var}=c_1/M + c_2/N$
to the $35$ cells of Table~\ref{tab:variance_scaling} yields
\[
 \mathrm{Var}(\hat{\gamma}_{M,N}^2) \approx \frac{0.0017}{M} + \frac{0.0077}{N},
\]
with both coefficients positive and the model explaining the data uniformly across the grid.
Two qualitative checks support the predicted scaling:
(i) at the largest $M=5000$ (so the random-feature contribution $c_1/M$ is negligible),
linear regression of $\log\mathrm{Var}$ on $\log(1/N)$ gives slope $1.06$ (theoretical: $1$);
(ii) at the largest $N=1000$, the variance plateaus near $c_2/N=7.7\times 10^{-6}$ as $M$ grows,
matching the floor predicted by the kernel $U$-statistic component.

The decomposition is also visible in Table~\ref{tab:variance_scaling}:
each row stabilizes at the irreducible $O(1/N)$ floor as $M\to\infty$,
in agreement with $\hat{\gamma}_{M,N}^2\to\bar{\gamma}_K^2(D)$ from \eqref{eq:6.tower}.
The sample mean across all $7\times 5\times 2000=7\times 10^4$ trials is $0.0256$
with cell-to-cell standard deviation below $3\times 10^{-4}$,
providing additional confirmation of the unbiasedness in the alternative case.

\subsubsection{Interaction estimator: bias and variance}\label{sec:5.1.4interaction}

We now verify Theorem~\ref{thm:rfu_interaction}, the analogous properties for the
\emph{interaction} estimator $\hat{\mathcal{R}}_{M,N}[\hat\nu^N]$ of \eqref{eq:3.5}.
We take $\nu=\mathcal{N}(0, I_d)$ in dimension $d=2$ with the Gaussian congestion kernel
$W(x,y)=\exp(-\alpha|x-y|^2)$ at $\alpha=1.0$,
for which $\mathcal{R}[\nu]=\frac{1}{2}(1+4\alpha)^{-d/2}$ has the closed form $0.1$.
This provides an analytic target against which the empirical estimator is compared,
in parallel with Sections~\ref{sec:5.1}--\ref{sec:5.1var} for the terminal MMD${}^2$ estimator.

\paragraph{Bias check.}
With $N=200$ samples and $M=500$ random frequencies,
averaged over $2{,}000$ independent trials,
the proposed unbiased estimator $\hat{\mathcal{R}}_{M,N}$ gives mean $0.10027\pm 1.9\times 10^{-4}$
(standard error of the mean across trials),
matching the analytic value $0.1$ within Monte Carlo precision (bias $+2.7\times 10^{-4}$,
within the standard error).
For comparison, the corresponding $V$-statistic estimator
\[
 \hat{V}_{M,N}^{\mathcal{R}}[\hat\nu^N]
 := \frac{\Psi(0)}{2MN^2}\sum_{r=1}^M\bigl(S_c^X(\tilde Z_r)^2+S_s^X(\tilde Z_r)^2\bigr)
\]
gives mean $0.10179\pm 1.9\times 10^{-4}$
with bias $+1.8\times 10^{-3}$, very close to the theoretical prediction
$\frac{\Psi(0)}{2N}=\frac{1}{400}=2.5\times 10^{-3}$ for the Gaussian kernel.
This directly verifies Theorem~\ref{thm:rfu_interaction} (i) and the parallel of Remark~\ref{rem:3.2}
for the interaction estimator.

\paragraph{Variance decomposition.}
On a grid of $5$ sample sizes $N\in\{50, 100, 200, 500, 1000\}$
and $6$ random-feature counts $M\in\{20, 50, 100, 500, 10^3, 5\!\cdot\!10^3\}$,
each cell averaged over $500$ trials,
we compute the empirical variance $\mathrm{Var}(\hat{\mathcal{R}}_{M,N})$
and fit the two-parameter model $\mathrm{Var}\approx c_1/M + c_2/N$
by non-negative least squares.
The best fit is
\[
 \mathrm{Var}(\hat{\mathcal{R}}_{M,N}) \;\approx\; \frac{0.0177}{M} + \frac{0.0085}{N},
\]
with both coefficients positive and the model explaining $R^2=0.9963$ of the variation
across the $30$-cell grid.
This confirms the variance decomposition $O(1/M)+O(1/N)$ of Theorem~\ref{thm:rfu_interaction} (iii),
in the same form (and with comparable coefficient magnitudes) as the terminal MMD${}^2$ estimator
of Theorem~\ref{thm:rfu_unbiased}(iii) reported in Section~\ref{sec:5.1var}.
The parallel structure of the two estimators is therefore reflected at the empirical level
as well as at the analytic level,
justifying the use of the same $(M,N)$-coupling in the convergence analysis of Section~\ref{sec:4.2conv}
for both the terminal MMD penalty and the kernel congestion cost.

\subsubsection{Effect of the penalty bias on SBP training}\label{sec:5.1.3vs}

The microbenchmarks above quantify the bias and variance of the estimator in isolation.
We now examine how the bias affects the practical Schr{\"o}dinger bridge training,
where the estimator is used inside an SGD loop with $\lambda\gg 1$ over thousands of iterations.
We compare the proposed unbiased RF $U$-statistic $\hat{\gamma}_{M,N}^2$
with the standard biased RFF $V$-statistic
$\hat{V}_{M,N}^2:= \Phi(0)|\frac{1}{N}\sum_i \phi(X_i)-\frac{1}{N}\sum_j\phi(Y_j)|^2$
of Rahimi--Recht~\cite{rah-rec:2007}.
We test two target families and several $(N,\lambda)$ regimes,
keeping all other hyperparameters at the values of Section~\ref{sec:5.2}--\ref{sec:5.3}.
Evaluation MMD${}^2$ is computed by the unbiased kernel $U$-statistic in all cases
so that the comparison is fair.

\paragraph{Unimodal target (Gaussian shift, $d=10$).}
We sweep $(N,\lambda^{-1})\in\{8,20,80\}\times\{10^{-3},10^{-5}\}$ and report mean $\pm$ standard deviation over $5$ seeds.
Table~\ref{tab:vstat_penalty} shows that the $V$-statistic penalty produces a trained drift
\emph{statistically indistinguishable} from the $U$-statistic penalty,
with differences well within the seed standard deviation across all six configurations.
The empirical $V$-statistic floor $\hat{V}(\mu_1,\mu_1)$
tracks the theoretical prediction $2\Phi(0)/N$ ($0.025$ at $N=80$, $0.10$ at $N=20$, $0.25$ at $N=8$),
confirming that the bias is present at the predicted magnitude
but does not propagate into the optimization output.

\begin{table}[htbp]
\centering
\caption{Effect of the penalty bias on SBP training, unimodal Gaussian shift ($d=10$, $5$ seeds).
The trained drift is statistically indistinguishable between the two penalties across all $(N,\lambda)$ configurations,
even though the $V$-stat floor $\hat{V}(\mu_1,\mu_1)$ scales as $2\Phi(0)/N$ and reaches $0.25$ at $N=8$.}
\label{tab:vstat_penalty}
\begin{small}
\begin{tabular}{cccccc}
\toprule
$N$ & $\lambda^{-1}$ & Penalty & $\EE[X_1]_1$ & $|\EE[X_1]-m|^2$ & $\hat{V}(\mu_1,\mu_1)$ \\
\midrule
$80$ & $10^{-3}$ & $U$-stat & $2.72\pm 0.14$ & $0.15\pm 0.07$ & $(2.0\pm 0.3)\times 10^{-2}$ \\
$80$ & $10^{-3}$ & $V$-stat & $2.73\pm 0.14$ & $0.14\pm 0.07$ & $(2.0\pm 0.3)\times 10^{-2}$ \\
$80$ & $10^{-5}$ & $U$-stat & $2.93\pm 0.14$ & $0.07\pm 0.02$ & $(2.0\pm 0.3)\times 10^{-2}$ \\
$80$ & $10^{-5}$ & $V$-stat & $2.93\pm 0.14$ & $0.07\pm 0.01$ & $(2.0\pm 0.3)\times 10^{-2}$ \\
$20$ & $10^{-3}$ & $U$-stat & $2.75\pm 0.11$ & $0.14\pm 0.04$ & $(8.5\pm 0.6)\times 10^{-2}$ \\
$20$ & $10^{-3}$ & $V$-stat & $2.80\pm 0.13$ & $0.12\pm 0.03$ & $(8.5\pm 0.6)\times 10^{-2}$ \\
$\phantom{0}8$  & $10^{-3}$ & $U$-stat & $2.49\pm 0.11$ & $0.45\pm 0.11$ & $(2.1\pm 0.2)\times 10^{-1}$ \\
$\phantom{0}8$  & $10^{-3}$ & $V$-stat & $2.64\pm 0.07$ & $0.29\pm 0.09$ & $(2.1\pm 0.2)\times 10^{-1}$ \\
\bottomrule
\end{tabular}
\end{small}
\end{table}

\paragraph{Interpretation.}
The same comparison repeated on the bimodal target of Section~\ref{sec:5.2}
($N=64$, $\lambda^{-1}=10^{-2}$, $30$ seeds) is again statistically indistinguishable
($p=0.35$ in a one-sided Fisher exact test of mode-imbalance failure rates),
so the two penalties produce empirically indistinguishable trained drifts
across batch sizes $N\in[8,80]$ and penalty parameters $\lambda^{-1}\in[10^{-5},10^{-2}]$.
The reason is that the bias is approximately
$(1/N)\bigl[2\Phi(0)-\iint K(x,x')\nu_\theta(dx)\nu_\theta(dx')-\iint K(y,y')\mu_1(dy)\mu_1(dy')\bigr]$. 
Near the SBP optimum $\nu_\theta\approx\mu_1$ the two self-kernel expectations balance,
so the bias is approximately constant in $\theta$ and contributes only a small additional gradient.
We nevertheless adopt the unbiased $U$-statistic in the remainder of the paper for theoretical reasons. 
It eliminates the population-level penalty floor of order $\Phi(0)/N$,
removes the need for a bias-correction term in the convergence analysis,
and yields the variance decomposition $O(1/M)+O(1/N)$ of Theorem~\ref{thm:rfu_unbiased}(iii).

\subsection{Bimodal target ($d=2$)}\label{sec:5.2}

We solve the SBP with $\mu_0=\delta_0$ and a bimodal target
$\mu_1 = \frac{1}{2}\mathcal{N}((2,2)^{\mathsf{T}}, \frac{1}{4} I_2)
        + \frac{1}{2}\mathcal{N}((-2,-2)^{\mathsf{T}}, \frac{1}{4} I_2)$
in dimension $d=2$, with $\sigma=0.5$.
The drift $u_\theta(t,x)$ is parametrized by a network with hidden dimensions $(64,32)$ and LayerNorm.
We compare the proposed RF $U$-statistic ($M=200$) with the kernel estimator \eqref{eq:empirical_mmd},
using $N=64$, $\lambda^{-1}=0.01$, and $\alpha=1.0$ for $2000$ epochs under the same random seed.

Both methods successfully learn the bimodal structure of $\mu_1$.
The terminal distributions $X_1^{(\theta)}$ split into the two modes in both cases.
Averaged over $5$ independent runs, the RF $U$-statistic achieves a terminal MMD${}^2$ of
$(1.4\pm 1.0)\times 10^{-2}$, comparable to the kernel estimator's MMD${}^2 \approx 1.1\times 10^{-2}$,
confirming that the RF approximation does not degrade accuracy in the bimodal setting.
The training times are similar at $N=64$ ($10.6$s vs $10.9$s),
as the SDE integration dominates the per-iteration cost for small batch sizes.
However, the computational advantage of the RF method grows with $N$
(see Section \ref{sec:5.5comp}).

\subsection{High-dimensional Gaussian shift}\label{sec:5.3}

We test the method in higher dimensions with $\mu_0=\delta_0$ and
$\mu_1=\mathcal{N}(m, I_d)$ where $m=(3,0,\ldots,0)^{\mathsf{T}}\in\RR^d$, for $d=10, 50, 100$.
The Schr{\"o}dinger bridge should steer the diffusion from the origin to a Gaussian centered at $m$.

The drift network uses LayerNorm and hidden dimensions that scale with $d$. 
The hyperparameters are summarized in Table \ref{tab:sbp_config}.
The SDE is integrated via the Euler--Maruyama scheme with $20$ time steps.

\begin{table}[htbp]
\centering
\caption{Hyperparameters for the high-dimensional SBP experiments.}
\label{tab:sbp_config}
\begin{tabular}{ccccccc}
\toprule
$d$ & $M$ & $N$ & Epochs & Hidden dims & $\alpha$ & $\lambda^{-1}$ \\
\midrule
$10$  & $400$  & $80$ & $4000$ & $(128,64)$ & $0.1$  & $0.001$ \\
$50$  & $800$  & $80$ & $5000$ & $(256,128)$ & $0.02$ & $5\times 10^{-4}$ \\
$100$ & $1500$ & $80$ & $6000$ & $(512,256)$ & $0.01$ & $3\times 10^{-4}$ \\
\bottomrule
\end{tabular}
\end{table}

\begin{table}[htbp]
\centering
\caption{Results for the high-dimensional SBP (Gaussian shift, target mean $m_1=3.0$).
Values are mean $\pm$ standard deviation over $5$ independent runs with different random seeds, for each $d$.}
\label{tab:sbp_result}
\begin{tabular}{ccccc}
\toprule
$d$ & $\EE[X_1]_1$ & $\frac{1}{d-1}\sum_{k=2}^{d}\EE[X_1]_k$ & Std (mean) & MMD${}^2$ \\
\midrule
$10$  & $2.69\pm 0.07$ & $\phantom{-}0.002\pm 0.025$ & $0.98\pm 0.00$ & $(1.4\pm 0.2)\times 10^{-3}$ \\
$50$  & $2.64\pm 0.07$ & $\phantom{-}0.002\pm 0.007$ & $0.96\pm 0.00$ & $(2.1\pm 0.2)\times 10^{-3}$ \\
$100$ & $2.71\pm 0.05$ & $-0.001\pm 0.002$ & $0.95\pm 0.00$ & $(2.0\pm 0.3)\times 10^{-3}$ \\
\bottomrule
\end{tabular}
\end{table}

The results are shown in Table \ref{tab:sbp_result}.
In all dimensions, the first coordinate $\EE[X_1]_1$ approaches the target $3.0$,
the remaining coordinates stay near zero,
and the standard deviation is close to the target value of $1.0$.
The MMD${}^2$ decreases to the order of $10^{-3}$, confirming the distributional constraint is approximately satisfied.
The standard deviations across seeds are small (between $0.05$ and $0.07$ for $\EE[X_1]_1$ in all three dimensions),
indicating that the method is stable and the reported performance is not seed-specific.
The accuracy is remarkably consistent across dimensions, with $\EE[X_1]_1$ in the range $2.64$--$2.71$ across $d=10, 50, 100$.

The residual gap between $\EE[X_1]_1$ and the target $3.0$ has two sources.
First, with finite penalty weight $\lambda$ the optimal penalised solution does not exactly match $\mu_1$,
a gap that shrinks as $\lambda^{-1}\to 0$ (verified in Section~\ref{sec:5.4lam}).
Second, the bandwidth scaling $\alpha=1/d$ keeps the cosine argument
$\mathrm{Var}(Z^{\mathsf{T}}X\mid X)=2\alpha|X|^2$ at $O(1)$ for high-dimensional targets with $\EE|X|^2=O(d)$,
which is necessary for the RF features to remain informative and for the
$O(1/M)+O(1/N)$ variance bound of Theorem~\ref{thm:rfu_unbiased}(iii) to be free of hidden $d$-dependent constants. 
The price is a flatter kernel that reduces MMD discriminative power,
so the same MMD${}^2$ value corresponds to a larger distributional discrepancy in higher dimensions.
The residual gap $\EE[X_1]_1\approx 2.7$ in Table~\ref{tab:sbp_result}
is dominated by the finite $\lambda$ rather than by $\alpha=1/d$. 
At $\lambda^{-1}=10^{-4}$ the same $d=100$ configuration reaches $\EE[X_1]_1=2.91$
(Section~\ref{sec:conv_ver}), closing $96\%$ of the gap.
Overall, the experiments demonstrate that the proposed method enables Schr{\"o}dinger bridge computation
in dimensions well beyond the $d\le 2$ range of the $O(N^2)$ kernel method in \cite{nak:2024sb}.

\subsection{Convergence verification}\label{sec:conv_ver}

The high-dimensional experiments of Section~\ref{sec:5.3} exhibit a residual gap
between $\EE[X_1]_1$ and the target $3.0$.
We now diagnose this gap experimentally by sweeping the two parameters
that control the approximation: the penalty weight $\lambda$ (Theorem~\ref{thm:convergence})
and the number of random Fourier features $M$ (Corollary~\ref{cor:kernel_convergence}).

\subsubsection{Penalty parameter sweep}\label{sec:5.4lam}

We numerically verify the convergence statement of Theorem~\ref{thm:convergence} (i),
\[
 \lim_{n\to\infty}\sqrt{\lambda_n}\,\gamma_K(\mathcal{L}(X_1^{(n)}),\mu_1) = 0,
\]
by sweeping the penalty parameter for the Gaussian shift target
($\mu_1=\mathcal{N}(m,I_d)$ with $m=(3,0,\ldots,0)^{\mathsf{T}}$) at $d=10$ and $d=50$.
All other settings (network architecture, bandwidth, $M$, $N$, $20$ Euler--Maruyama steps, training epochs)
are kept identical to those of Section~\ref{sec:5.3}.
We vary $\lambda^{-1}\in\{10^{-2}, 3\times 10^{-3}, 10^{-3}, 3\times 10^{-4}, 10^{-4}\}$
and report mean and standard deviation over $3$ random seeds.

\begin{table}[htbp]
\centering
\caption{$\lambda$-sweep for the $d=10$ Gaussian shift SBP
($\mu_1=\mathcal{N}(m, I_{10})$, $m_1=3.0$).
Mean $\pm$ standard deviation over $3$ seeds, $5000$ epochs each.
The control cost denotes the empirical estimate of $\EE\bigl[\int_0^1|u|^2 dt\bigr]$ at the end of training.}
\label{tab:lambda_sweep}
\begin{tabular}{ccccc}
\toprule
$\lambda^{-1}$ & $\EE[X_1]_1$ & MMD${}^2$ & $|\EE[X_1]-m|^2$ & Control cost \\
\midrule
$10^{-2}$            & $1.78\pm 0.05$ & $(2.4\pm 0.2)\times 10^{-2}$ & $1.51\pm 0.12$ & $7.4\pm 0.3$ \\
$3\times 10^{-3}$    & $2.28\pm 0.05$ & $(5.2\pm 1.2)\times 10^{-3}$ & $0.54\pm 0.08$ & $10.1\pm 0.3$ \\
$10^{-3}$            & $2.58\pm 0.06$ & $(2.0\pm 0.7)\times 10^{-3}$ & $0.21\pm 0.05$ & $11.6\pm 0.4$ \\
$3\times 10^{-4}$    & $2.76\pm 0.06$ & $(1.1\pm 0.3)\times 10^{-3}$ & $0.087\pm 0.028$ & $13.0\pm 0.7$ \\
$10^{-4}$            & $2.81\pm 0.06$ & $(0.8\pm 0.2)\times 10^{-3}$ & $0.061\pm 0.020$ & $13.8\pm 0.9$ \\
\bottomrule
\end{tabular}
\end{table}

The results in Table~\ref{tab:lambda_sweep} confirm the predicted convergence behaviour.
As $\lambda^{-1}$ decreases (i.e., $\lambda$ increases) by two orders of magnitude,
the terminal MMD${}^2$ decreases by approximately $30$-fold,
the squared mean error $|\EE[X_1]-m|^2$ decreases by approximately $25$-fold,
and the first-coordinate mean $\EE[X_1]_1$ approaches the target value $3.0$ from below.
Computing $\sqrt{\lambda}\,\gamma_K\le \sqrt{\lambda\cdot\mathrm{MMD}^2}$ from the table gives
$1.51$ at $\lambda^{-1}=10^{-2}$ and $0.087$ at $\lambda^{-1}=10^{-4}$,
a $17$-fold reduction consistent with $\sqrt{\lambda}\gamma_K\to 0$ as stated in Theorem~\ref{thm:convergence} (i).
The control cost simultaneously increases monotonically from $7.4$ to $13.8$,
exhibiting the expected fidelity--cost trade-off:
stronger enforcement of the terminal constraint requires a more aggressive control.

The residual gap of $\EE[X_1]_1 \approx 2.81$ to the target $3.0$ at $\lambda^{-1}=10^{-4}$
is no longer dominant, and could in principle be eliminated by further decreasing $\lambda^{-1}$ at the cost of additional iterations.
This experiment thus identifies the finite penalty weight as the principal source of the residual gap
discussed in Section~\ref{sec:5.3}.

To confirm that the same diagnosis applies in higher dimensions and that the residual gap
$\EE[X_1]_1\approx 2.7$ observed in Table~\ref{tab:sbp_result} for $d\ge 10$ is not a structural
limitation of the $\alpha=1/d$ scaling, we repeat the sweep at $d=100$
with the corresponding hyperparameters of Table~\ref{tab:sbp_config}
($\alpha=0.01$, $M=1500$, $N=80$, $6000$ epochs);
the analogous sweep at the intermediate $d=50$ is qualitatively identical
and is omitted to save space.

\begin{table}[htbp]
\centering
\caption{$\lambda$-sweep for the $d=100$ Gaussian shift SBP
($\mu_1=\mathcal{N}(m, I_{100})$, $m_1=3.0$).
Mean $\pm$ standard deviation over $3$ seeds, $6000$ epochs each, $\alpha=0.01$, $M=1500$.}
\label{tab:lambda_sweep_d100}
\begin{tabular}{ccccc}
\toprule
$\lambda^{-1}$ & $\EE[X_1]_1$ & MMD${}^2$ & $|\EE[X_1]-m|^2$ & Control cost \\
\midrule
$10^{-2}$            & $0.96\pm 0.02$ & $(1.1\pm 0.0)\times 10^{-1}$ & $4.223\pm 0.099$ & $\phantom{0}4.4\pm 0.5$ \\
$3\times 10^{-3}$    & $1.72\pm 0.03$ & $(3.6\pm 0.1)\times 10^{-2}$ & $1.774\pm 0.089$ & $16.6\pm 1.4$ \\
$10^{-3}$            & $2.28\pm 0.01$ & $(9.0\pm 0.4)\times 10^{-3}$ & $0.740\pm 0.014$ & $30.8\pm 2.0$ \\
$3\times 10^{-4}$    & $2.72\pm 0.06$ & $(2.0\pm 0.1)\times 10^{-3}$ & $0.321\pm 0.029$ & $43.3\pm 3.6$ \\
$10^{-4}$            & $2.91\pm 0.01$ & $(1.3\pm 0.1)\times 10^{-3}$ & $0.263\pm 0.034$ & $51.8\pm 4.5$ \\
\bottomrule
\end{tabular}
\end{table}

Table~\ref{tab:lambda_sweep_d100} shows that the trend in $d=100$ is identical to and, if anything, more favourable than in $d=10$. 
$\EE[X_1]_1$ progresses from $0.96\pm 0.02$ at $\lambda^{-1}=10^{-2}$
to $2.91\pm 0.01$ at $\lambda^{-1}=10^{-4}$,
closing approximately $96\%$ of the gap to the target $3.0$.
The best value of $\EE[X_1]_1$ at $\lambda^{-1}=10^{-4}$ thus lies in the narrow range $2.81$ (at $d=10$) to $2.91$ (at $d=100$),
demonstrating that the residual gap of $\EE[X_1]_1\approx 2.7$
reported at the working $\lambda^{-1}$ values of Table~\ref{tab:sbp_result}
is driven by the choice of $\lambda$ rather than by a degradation of the method with dimension.
The price of pushing $\lambda$ further is a roughly proportional increase in the control cost
(at $d=100$: from $4.4$ to $51.8$, a $12$-fold increase)
and in the number of optimization iterations needed for convergence,
consistent with the trade-off quantified in the explicit a.s.\ rate \eqref{eq:4.rate} of Proposition~\ref{prop:rate}.

\subsubsection{Kernel $U$-statistic limit ($M\to\infty$)}\label{sec:5.4ker}

We numerically verify Corollary~\ref{cor:kernel_convergence}, which states that
the random Fourier $U$-statistic penalty converges to the kernel $U$-statistic penalty $\bar{\gamma}_K^2(D)$ as $M\to\infty$,
and that both yield the same limiting Schr{\"o}dinger bridge.
Using the $d=10$ Gaussian shift setting of Section~\ref{sec:5.3} as a controlled benchmark
($N=80$, $\lambda^{-1}=10^{-3}$, $5000$ epochs, $5$ seeds, all other hyperparameters unchanged),
we compare four penalty estimators: the kernel $U$-statistic $\bar{\gamma}_K^2(D)$
of complexity $O(N^2)$, and the RF $U$-statistic $\hat{\gamma}_{M,N}^2$ for $M\in\{100,400,1600\}$.
Evaluation is performed by the kernel $U$-statistic in all cases for fair comparison.

\begin{table}[htbp]
\centering
\caption{Kernel $U$-statistic vs RF $U$-statistic penalty in SBP training
($d=10$ Gaussian shift, $N=80$, $\lambda^{-1}=10^{-3}$, $5000$ epochs, $5$ seeds).
Evaluation MMD${}^2$ is computed by the kernel $U$-statistic for fair comparison.}
\label{tab:kernel_vs_rf}
\begin{tabular}{lcccc}
\toprule
Method & $\EE[X_1]_1$ & $|\EE[X_1]-m|^2$ & MMD${}^2$ (eval) & Std (mean) \\
\midrule
Kernel $U$-stat       & $2.69\pm 0.08$ & $0.13\pm 0.04$ & $(1.40\pm 0.70)\times 10^{-3}$ & $0.98\pm 0.01$ \\
RF $U$-stat $(M=100)$ & $2.63\pm 0.09$ & $0.18\pm 0.07$ & $(2.11\pm 0.88)\times 10^{-3}$ & $0.98\pm 0.01$ \\
RF $U$-stat $(M=400)$ & $2.67\pm 0.12$ & $0.16\pm 0.07$ & $(1.98\pm 0.77)\times 10^{-3}$ & $0.98\pm 0.01$ \\
RF $U$-stat $(M=1600)$& $2.77\pm 0.09$ & $0.10\pm 0.05$ & $(2.07\pm 0.84)\times 10^{-3}$ & $0.97\pm 0.00$ \\
\bottomrule
\end{tabular}
\end{table}

The four methods produce statistically indistinguishable solutions. 
The first-coordinate means $\EE[X_1]_1$ all fall in $[2.63, 2.77]$, well within the seed-to-seed standard deviation,
and the standard deviations across the remaining coordinates agree to two decimal places.
The MMD${}^2$ values for the RF estimators ($M=100, 400, 1600$) cluster around $2\times 10^{-3}$,
slightly above the kernel $U$-statistic's $1.4\times 10^{-3}$,
but again within seed variability.
This empirical equivalence supports Corollary~\ref{cor:kernel_convergence}:
the RF approximation is a faithful surrogate of the kernel $U$-statistic in the SBP training loop,
and increasing $M$ does not push the RF method beyond the kernel $U$-statistic baseline.

A complementary cost analysis is given in Section~\ref{sec:5.5comp},
where we show that for sufficiently large batch size $N$ the kernel $U$-statistic computation
($O(N^2)$ per iteration, with backpropagation through the full $N\times N$ kernel matrix)
becomes a memory and compute bottleneck, whereas the RF $U$-statistic remains scalable
at $O(NM)$ cost.
For the moderate $N=80$ used here, both estimators have comparable per-iteration cost,
so the table demonstrates only the equivalence in solution quality. 
The computational advantage of the RF method emerges in higher dimensions and larger batch sizes.

\subsection{Computational scaling}\label{sec:5.5comp}

We compare the proposed RF $U$-statistic with the kernel estimator \eqref{eq:empirical_mmd}
in terms of both solution quality and computational cost.

We measure the per-iteration wall-clock time of the MMD computation alone (forward + backward,
$d=10$, $\alpha=0.1$, $M=500$), which isolates the kernel-evaluation cost from the SDE overhead.

\begin{table}[htbp]
\centering
\caption{MMD computation time (forward + backward, in milliseconds; $d=10$, $\alpha=0.1$, $M=500$).
Solution quality of the two estimators is comparable in the SBP training pipeline
(verified in Section~\ref{sec:5.4ker} for $d=10$).}
\label{tab:scalability_mmd}
\begin{tabular}{cccc}
\toprule
$N$ & Kernel $O(N^2)$ & RF $U$-stat $O(NM)$ & Speedup \\
\midrule
$100$  & $0.6$  & $0.7$ & $0.8\times$ \\
$200$  & $0.8$  & $0.6$ & $1.3\times$ \\
$500$  & $4.7$  & $1.1$ & $4.1\times$ \\
$1000$ & $19.3$ & $2.0$ & $9.5\times$ \\
$2000$ & $83.3$ & $3.7$ & $22.3\times$ \\
\bottomrule
\end{tabular}
\end{table}

The speedup reaches $22\times$ at $N=2000$, confirming the $O(N^2)$ vs $O(NM)$ scaling.
The kernel method also requires storing the $N\times N$ kernel matrix in the computation graph
for backpropagation through the SDE trajectory,
which leads to memory exhaustion for large $N$ on typical hardware;
the RF $U$-statistic avoids this entirely, with memory footprint $O(NM + Nd)$.
This memory advantage is essential for high-dimensional MFGs and SBPs
where both large $N$ (for accurate sampling) and large $d$ are required simultaneously.

\subsection{Potential MFG: EV charging fleet coordination}\label{sec:5ev}

We now turn to the main MFG application of the paper.
Consider a fleet of electric vehicles that need to charge over an operational horizon $[0,T]$ with $T=1$.
Each EV is described by a state $X_t=(s_t,h_t)\in\RR\times\RR$ where $s_t\in\RR$ is the state of charge (SOC), i.e, 
the ratio of current stored energy to the nominal battery capacity, normalized so that
$s_t=0$ corresponds to a fully discharged battery and $s_t=1$ to a fully charged one. Further, 
$h_t\in\RR$ is a \emph{physical heterogeneity coordinate}
representing the log of the per-vehicle charging-speed multiplier,
i.e,, $\eta(h):=e^h$ scales the actual SOC change rate produced by a unit demand,
modelling the combined effect of battery capacity, charger maximum power, and conversion efficiency.
The controlled dynamics are
\begin{equation}\label{eq:5ev.dyn}
 ds_t = \eta(h)\,u_\theta(t,X_t)\,dt + \sigma\,dB_t,\qquad dh_t = 0,\qquad
 s_0\sim \mathcal{N}(0.20,\,0.05^2),\ \ h_0\sim\mathcal{N}(0,\,\sigma_h^2),
\end{equation}
with $\sigma=0.05$ and $\sigma_h=0.3$,
so that $\eta(h)$ is log-normal with mean $1$ and approximate $95\%$ range $[0.55,\,1.8]$.
The control $u_\theta(t,X_t)\in\RR$ is the EV's demand fraction
(read by the on-board charger and converted into an actual SOC rate $\eta(h)u_\theta$);
it is parametrised by a neural network that takes $(t,s,h)$ as input,
so the drift can differentiate charging schedules across EVs with different physical $\eta(h)$.

Following classical formulations of mean-field-type EV charging coordination
\cite{cou-per-tem-deb:2012,ma-cal-his:2013,par-gen-lyg:2020},
we adopt an \emph{aggregate-demand--price} congestion cost.
Let $u^{\!*}:[0,1]\to[0,1]$ be the SOC-dependent charging-power profile of a Li-ion battery:
$u^{\!*}$ is approximately maximal in the constant-current range ($20$--$80\%$ SOC, drawing tens of kW per vehicle)
and tapered in the constant-voltage range ($\gtrsim 80\%$ SOC).
We use the smooth profile
\begin{equation}\label{eq:5ev.ustar}
 u^{\!*}(s) \;=\; \sigma\bigl(\beta(s-s_-)\bigr)\,\sigma\bigl(\beta(s_+-s)\bigr),\qquad
 \beta = 20,\ s_- = 0.1,\ s_+ = 0.85,
\end{equation}
with $\sigma(x):=1/(1+e^{-x})$, which is bounded continuous and captures the CC plateau and CV taper
($u^{\!*}(s)\approx 1$ for $s\in[0.2,0.8]$, $u^{\!*}(0.85)\approx 0.5$, $u^{\!*}(0.95)\approx 0.18$, $u^{\!*}(s)\to 0$ as $s\to 0$ or $s\to 1$).
The grid power drawn by vehicle $i$ is then $\eta(h^{(i)})u^{\!*}(s^{(i)}_t)$ (charging-speed multiplier times the SOC-dependent profile),
and the cost functional is the potential MFG \eqref{eq:4.1mfg} with
\begin{itemize}
\item Terminal target $\mu_T=\mathcal{N}(0.85, 0.05^2)$ on the SOC component (deadline distribution).
\item Terminal kernel $K(s,s')=e^{-\alpha_K(s-s')^2}$ with $\alpha_K=50$ on the SOC coordinate only.
\item Aggregate-demand congestion
\begin{equation}\label{eq:5ev.RD}
 \mathcal{R}^{(D)}[\nu_t]\;=\;D[\nu_t]^2,\qquad
 D[\nu]\;:=\;\int \eta(h)\,u^{\!*}(s)\,\nu(ds, dh),
\end{equation}
representing the squared aggregate charging demand of the fleet at time $t$.
\item Weights $\lambda=10^3$ and $c\in\{0,10,100\}$ (no-congestion baseline, mild congestion, strong congestion).
\end{itemize}
The cost $\mathcal{R}^{(D)}[\nu_t]$ corresponds to a quadratic price function applied to the aggregate fleet demand,
and equals the interaction integral $\iint \eta(h)u^{\!*}(s)\,\eta(h')u^{\!*}(s')\,\nu_t(ds,dh)\,\nu_t(ds',dh')$
with the rank-one kernel $(x,x')\mapsto \eta(h)u^{\!*}(s)\cdot\eta(h')u^{\!*}(s')$.
Two EVs both in the constant-current phase with similar $\eta$ contribute jointly to grid demand
and produce a synchronized peak load, whereas an EV in the CV phase ($u^{\!*}\to 0$)
or with very small $\eta$ contributes little and is not aggregated.

\paragraph{Empirical estimator and convergence theory.}
$\mathcal{R}^{(D)}[\nu]$ falls outside the kernel-MMD framework of Theorem~\ref{thm:rfu_interaction}
(the rank-one kernel $\eta(h)u^{\!*}(s)\cdot\eta(h')u^{\!*}(s')$ is not translation invariant),
but the functional $\nu\mapsto D[\nu]^2$ is non-negative and continuous in the weak topology on $\cP(\RR^2)$,
since $(s,h)\mapsto \eta(h)u^{\!*}(s)$ is bounded continuous (recall $\eta$ is bounded above on the support of $h_0$ for any finite training horizon, and the SDE preserves $h$).
Writing $g_i:=\eta(h^{(i)})u^{\!*}(s_t^{(i)})$, an unbiased $U$-statistic estimator with $O(N)$ complexity is available:
\begin{equation}\label{eq:5ev.Rhat}
 \hat{\mathcal{R}}^{(D)}_{N}[\hat\nu^N_t]
 \;:=\;\frac{1}{N(N-1)}\sum_{i\neq j}g_i\,g_j
 \;=\;\frac{(\sum_i g_i)^2-\sum_i g_i^2}{N(N-1)},
\end{equation}
which is strongly consistent ($\hat{\mathcal{R}}^{(D)}_{N}[\hat\nu^N]\to \mathcal{R}^{(D)}[\nu]$ a.s.\ as $N\to\infty$)
by the standard SLLN for $U$-statistics \cite[Section~5.4]{ser:1980}.
The sample-level almost-sure convergence of the empirical MFG to the constrained value
therefore follows from Proposition~\ref{prop:convergence_general}
(no rate condition on a running-cost feature count is required, only $N_n\to\infty$).

\paragraph{State-space idealization.}
The SOC is physically constrained to $[0,1]$, but the chosen means and variances keep $\mathbb{P}(s_t\notin[0,1])$ below $1.4\times 10^{-3}$;
we therefore treat the SOC as a real-valued diffusion, leaving a $[0,1]$-truncated and reflecting variant for future work.

Training uses $N=128$ paths per batch, $M=300$ random frequencies for the terminal kernel $K$
(the aggregate-demand cost \eqref{eq:5ev.Rhat} requires no random features),
$20$ Euler--Maruyama steps, learning rate $10^{-3}$, and $2000$ epochs.
We report mean $\pm$ standard deviation over $3$ random seeds.

\begin{table}[htbp]
\centering
\caption{EV charging fleet potential MFG with physical heterogeneity ($\eta(h)=e^h$, $\sigma_h=0.3$)
and aggregate-demand congestion, $\lambda=10^3$, $3$ seeds per configuration.
``Peak $D$'' is $\max_{t\in(0,T]}\hat D[\hat\nu^N_t]$ on a held-out batch of $n_{\mathrm{eval}}=2000$ paths,
``Mean $D$'' is the time-averaged $\hat D[\hat\nu^N_t]$ on the same paths,
and the evaluation MMD${}^2$ on SOC uses the kernel $U$-statistic for fairness.
``$\|u_n\|_\infty$'' is the empirical $L^\infty$ norm of the trained drift on the evaluation batch,
verifying the uniform bound used in Proposition~\ref{prop:rate_mfg}.
$c=0$ is the no-congestion baseline; $c=10,100$ activate the aggregate-demand congestion cost \eqref{eq:5ev.RD}.}
\label{tab:ev_charging}
\begin{small}
\begin{tabular}{ccccccc}
\toprule
$c$ & $\EE[s_T]$ & SOC std at $T$ & Eval.\ MMD${}^2$ & Peak $D$ & Mean $D$ & $\|u_n\|_\infty$ \\
\midrule
$0$   & $0.844\pm 0.003$ & $0.055$ & $(1.6\pm 1.7)\times 10^{-3}$ & $1.044\pm 0.001$ & $0.946$ & $1.10\pm 0.02$ \\
$10$  & $0.844\pm 0.002$ & $0.055$ & $(1.6\pm 1.1)\times 10^{-3}$ & $1.044\pm 0.001$ & $0.939$ & $1.10\pm 0.02$ \\
$100$ & $0.813\pm 0.035$ & $\mathbf{0.236}$ & $(3.8\pm 3.3)\times 10^{-3}$ & $\mathbf{0.887\pm 0.168}$ & $\mathbf{0.431}$ & $2.48\pm 0.20$ \\
\bottomrule
\end{tabular}
\end{small}
\end{table}

Table~\ref{tab:ev_charging} shows three effects.
\emph{First,} the framework solves the EV charging MFG:
$\EE[s_T]\approx 0.844$ at $c=0,10$, with a small downward bias to $0.81$ at $c=100$
reflecting the congestion-vs-terminal trade-off.
\emph{Second,} the aggregate-demand congestion cost reduces both peak and time-averaged demand:
at $c=100$, peak $D$ drops from $1.044$ to $0.887$ ($\approx 15\%$ peak shaving)
and mean $D$ from $0.946$ to $\mathbf{0.431}$ ($\approx 54\%$ reduction),
at the price of a moderate increase in evaluation MMD${}^2$
($1.6\times 10^{-3}\to 3.8\times 10^{-3}$, factor $\approx 2.4$).
At $c=10$ the congestion weight is too small relative to $\lambda=10^3$ to differ from the $c=0$ baseline;
the effect emerges only at $c=100$.
\emph{Third,} this reduction combines two ingredients: the physical heterogeneity $\eta(h)=e^h$
naturally desynchronises fast ($\eta>1$) and slow ($\eta<1$) chargers, and the drift $u_\theta(t,s,h)$ further staggers them in time.
The combined effect broadens the terminal SOC distribution at $c=100$
($\mathrm{std}(s_T):0.055\to\mathbf{0.236}$, factor $4.3$),
showing that the strong-congestion MFG sacrifices terminal SOC concentration to flatten grid demand.

\paragraph{Empirical verification of the uniform drift bound.}
The rate analysis of Proposition~\ref{prop:rate_mfg} assumes a uniform $L^\infty$ bound
on the trained drift sequence $\{u_n\}$.
The rightmost column of Table~\ref{tab:ev_charging} reports
$\|u_n\|_\infty:=\max_t\,\|u_n(t,\cdot)\|_{L^\infty}$ measured on the evaluation batch.
The largest value occurs at the strong-congestion setting $c=100$,
where the drift must work hardest to spread the population in time.
The uniform bound is satisfied without any explicit weight clipping or bounded-output activation
in the present network architecture (hidden dimensions $(64,32)$ with ReLU activations and LayerNorm).

\paragraph{Scope and physical realism.}
The present demonstration adopts a single physical heterogeneity coordinate $h$
encoding the log charging-speed multiplier;
extending to richer physical heterogeneities (battery capacity, plug-out time uncertainty, comfort preference)
that act jointly through $h$-dependent dynamics and a domain-justified law on $h$
is a natural problem-driven direction.
The unbiased $U$-statistic estimator \eqref{eq:5ev.Rhat} keeps the per-iteration
cost of the congestion evaluation at $O(N)$ and dimension-free in the heterogeneity dimension,
so such extensions are supported by the framework without algorithmic modification.

\subsection{Positioning against deep-learning MFG / SBP baselines}\label{sec:5.7baselines}

The methodology developed in this paper is one of several deep-learning approaches to high-dimensional MFG and SBP that have appeared in recent years.
The most directly relevant baselines are
APAC-Net of Lin et al.~\cite{lin-etal:2021apac}, which alternates between a population network and a control network in a saddle-point formulation;
the HJB-based machine-learning framework of Ruthotto et al.~\cite{rut-etal:2020}, which parametrises the value function rather than the control;
the deep MFG/MFC methods surveyed in \cite{car-lau:2022,car-lau:2021};
and the bridge-matching and diffusion-Schr{\"o}dinger-bridge methods
\cite{de-etal:2021,var-etal:2021,shi-etal:2023dsbm,Tong2024}, which target the $c=0$ specialization.
Table~\ref{tab:baseline_comparison} summarises the structural differences relevant to a JSC-style assessment.

\begin{table}[htbp]
\centering
\caption{Structural comparison with representative deep-learning MFG and SBP methods.
``Networks'' counts the neural networks involved in the optimization;
``Cost class'' indicates the running- and terminal-cost structure each method supports;
``Convergence'' refers to the strongest known theoretical guarantee for the method as actually implemented (rather than for an idealized population-level limit);
``Per-iter'' is the asymptotic per-iteration cost scaling
($M$ = random-feature count, $N$ = batch size, $\tau$ = SDE time steps, $P_\theta$ = network parameter count).}
\label{tab:baseline_comparison}
\resizebox{\textwidth}{!}{%
\begin{tabular}{lccccc}
\toprule
Method & Reference & Networks & Cost class & Convergence & Per-iter \\
\midrule
\textbf{Ours} & this paper & $1$ (drift) & MMD + weakly cts.\ $\mathcal{R}$ & sample-level a.s.\ + rate & $O(NM+\tau P_\theta)$ \\
APAC-Net & \cite{lin-etal:2021apac} & $2$ (pop.\,+\,ctrl.) & Lagrangian, no soft target & PDE-level (asymptotic) & $O(\tau P_\theta)$ \\
ML-MFG & \cite{rut-etal:2020} & $1$ (value fn.) & local + interaction (HJB form) & PDE-level (asymptotic) & $O(\tau P_\theta)$ \\
Deep MFG/MFC & \cite{car-lau:2022,car-lau:2021} & $2$--$3$ & general $f(\cdot,\nu)$, $g(\cdot,\nu)$ & population-level & $O(\tau P_\theta)$ \\
DSB / DSBM & \cite{de-etal:2021,shi-etal:2023dsbm} & $2$ (score) & SBP only ($c=0$) & empirical & $O(\tau P_\theta)$ \\
SB max-lik. & \cite{var-etal:2021} & $2$ & SBP only ($c=0$) & empirical & $O(\tau P_\theta)$ \\
\bottomrule
\end{tabular}%
}
\end{table}

Three structural distinctions are worth highlighting.
\emph{First}, our method uses a single drift network, whereas APAC-Net, DSB, DSBM and the deep MFG/MFC variants use two or more (e.g., a control network and a population/score network, or a value function and a forward-backward pair).
The reason is structural.
The terminal MMD penalty $\gamma_K^2(\mathcal{L}(X_T^{(n)}),\mu_T)$ is a functional of the controlled marginal $\mathcal{L}(X_T^{(n)})$
that can be estimated directly from sampled controlled paths,
without learning the density or score of $\mathcal{L}(X_T^{(n)})$.
In contrast, deep-learning MFG/SBP methods that target a hard distributional constraint or a score-matching objective
need an explicit estimate of the population density or score,
which is provided by an auxiliary population or score network.
\emph{Second}, our convergence theorem operates at the \emph{sample} level for the empirical objective that is actually minimized
(Theorem~\ref{thm:convergence} and Proposition~\ref{prop:rate_mfg}),
whereas the comparison methods either provide population-level guarantees (deep MFG/MFC of \cite{car-lau:2022}) or no explicit convergence statement in the sampled regime (APAC-Net, DSB, DSBM).
\emph{Third}, our per-iteration cost on the kernel-MMD terms is the explicit $O(NM)$ of the random-feature representation;
the comparison methods rely on neural-network forward/backward passes whose cost is implicit in $P_\theta$.

\paragraph{Direct numerical comparison.}
A fair head-to-head benchmark with the above baselines is non-trivial because of differing problem formulations:
APAC-Net and Ruthotto et al.\ solve HJB-style problems without a soft distributional terminal target,
and DSB / DSBM target the SBP specialisation $c=0$ via score-matching rather than a penalty.
The deep MFG/MFC framework of \cite{car-lau:2022,car-lau:2021} admits per-agent costs $f(t,x,\alpha,\nu)$, $g(x,\nu)$
and in fact subsumes our kernel-MMD terminal cost as a special $g(x,\nu)$;
adopting that methodology would, however, replace our unbiased $O(NM)$ estimator
by a biased sample mean, introduce a separate population or value-function network,
and yield population-level rather than sample-level guarantees.
The generalised conditional-gradient method of \cite{nakam-sai:2026loc,nakam-sai:2026disc}
proves non-asymptotic convergence rates for potential MFGs with local coupling
on fully discretised finite-difference grids, which is complementary in scope:
their analysis is sharp on the grid but inherits the curse of dimensionality,
whereas our sample-level rate (Proposition~\ref{prop:rate_mfg}) is mesh-free
and applies to nonlocal kernel-MMD and aggregate-demand couplings.
A quantitative benchmark on a common instance well suited to all candidate methods is left for future work;
Section~\ref{sec:5.5comp} gives the within-framework comparison of the RF $U$-statistic against the $O(N^2)$ kernel method of \cite{nak:2024sb}.

\section{Proofs}\label{sec:6}

\subsection{Proof of Theorem \ref{thm:rfu_unbiased}}\label{sec:6.1}

We prove each part separately.

\noindent
(i) Unbiasedness.
Fix $z\in\RR^d$.
By expanding the squares in \eqref{eq:3.2},
\begin{align*}
 (S_c^X(z))^2 + (S_s^X(z))^2
 &= \sum_{i=1}^N\sum_{j=1}^N\bigl(\cos(z^{\mathsf{T}}X_i)\cos(z^{\mathsf{T}}X_j) + \sin(z^{\mathsf{T}}X_i)\sin(z^{\mathsf{T}}X_j)\bigr) \\
 &= \sum_{i=1}^N\sum_{j=1}^N\cos(z^{\mathsf{T}}(X_i-X_j)) = N + \sum_{\substack{i,j=1\\i\neq j}}^N\cos(z^{\mathsf{T}}(X_i-X_j)),
\end{align*}
where we used $\cos(a-b)=\cos a\cos b + \sin a\sin b$ and $\cos(0)=1$.
Since $X_1,\ldots,X_N$ are i.i.d.\ from $\mu$ and $X,X'$ denote independent copies with law $\mu$,
\[
 \sum_{\substack{i,j=1\\i\neq j}}^N\EE\left[\cos(z^{\mathsf{T}}(X_i-X_j))\right]
 = N(N-1)\EE[\cos(z^{\mathsf{T}}(X-X'))].
\]
Therefore
\begin{equation}\label{eq:6.1}
 \EE[U_{XX}(z)]
 = \frac{\EE[(S_c^X(z))^2 + (S_s^X(z))^2 - N]}{N(N-1)}
 = \EE[\cos(z^{\mathsf{T}}(X-X'))].
\end{equation}
Similarly, $\EE[U_{YY}(z)] = \EE[\cos(z^{\mathsf{T}}(Y-Y'))]$ where $Y,Y'$ are independent copies with law $\nu$.

For the cross term \eqref{eq:3.3}, since $(X_i, Y_j)$ are independent for all pairs,
\begin{align*}
 \EE[V_{XY}(z)]
 &= \frac{1}{N^2}\sum_{i=1}^N\sum_{j=1}^N \EE[\cos(z^{\mathsf{T}}(X_i-Y_j))] = \EE[\cos(z^{\mathsf{T}}(X-Y))].
\end{align*}
Therefore, for each fixed $z$,
\[
 \EE[U_{XX}(z) - 2V_{XY}(z) + U_{YY}(z)]
 = \EE[\cos(z^{\mathsf{T}}(X-X'))] - 2\EE[\cos(z^{\mathsf{T}}(X-Y))] + \EE[\cos(z^{\mathsf{T}}(Y-Y'))].
\]
Now, averaging over $Z_1,\ldots,Z_M$ and applying the tower property, we have 
\begin{align*}
 \EE[\hat{\gamma}_{M,N}^2]
 &= \frac{\Phi(0)}{M}\sum_{r=1}^M\EE\bigl[\EE[U_{XX}(Z_r) - 2V_{XY}(Z_r) + U_{YY}(Z_r)\,|\, Z_r]\bigr] \\
 &= \Phi(0)\,\EE\bigl[\EE[\cos(Z^{\mathsf{T}}(X-X'))\,|\, Z] - 2\EE[\cos(Z^{\mathsf{T}}(X-Y))\,|\, Z] + \EE[\cos(Z^{\mathsf{T}}(Y-Y'))\,|\, Z]\bigr] \\
 &= \gamma_K(\mu,\nu)^2,
\end{align*}
where the last step uses the representation \eqref{eq:repre}.

\noindent
(ii) Complexity.
For each frequency $z=Z_r$, computing $S_c^X(z)$ and $S_s^X(z)$ requires $O(N)$ operations,
namely evaluating $\cos(z^{\mathsf{T}}X_i)$ and $\sin(z^{\mathsf{T}}X_i)$ for $i=1,\ldots,N$ and summing.
The same complexity holds for $S_c^Y(z)$ and $S_s^Y(z)$.
From these four sums, $U_{XX}(z)$, $V_{XY}(z)$, and $U_{YY}(z)$ are each computed in $O(1)$.
Since there are $M$ frequencies, the total cost is $O(NM)$.

\medskip\noindent
(iii) Variance.
Let
\[
 g(z) := \Phi(0)\bigl[U_{XX}(z) - 2V_{XY}(z) + U_{YY}(z)\bigr],
\]
so that $\hat{\gamma}_{M,N}^2 = \frac{1}{M}\sum_{r=1}^M g(Z_r)$.
Note that $g(z)$ depends on the data $D := (X_1,\ldots,X_N,Y_1,\ldots,Y_N)$,
which is shared across all frequencies $Z_1,\ldots,Z_M$.
Hence $g(Z_1),\ldots,g(Z_M)$ are conditionally independent given $D$.
Conditioning on $D$ via the law of total variance, we get 
\begin{equation}\label{eq:6.2}
 \mathrm{Var}(\hat{\gamma}_{M,N}^2)
 = \EE\!\left[\mathrm{Var}\!\left(\hat{\gamma}_{M,N}^2 \,|\, D\right)\right]
 + \mathrm{Var}\!\left(\EE\!\left[\hat{\gamma}_{M,N}^2 \,|\, D\right]\right).
\end{equation}

Let us observe the conditional term.
Given $D$, the frequencies $Z_1,\ldots,Z_M$ are i.i.d., so
\[
 \mathrm{Var}(\hat{\gamma}_{M,N}^2\,|\, D) = \frac{1}{M}\,\mathrm{Var}(g(Z_1)\,|\, D).
\]
From \eqref{eq:3.2}--\eqref{eq:3.3} and the trivial bounds $|U_{XX}(z)|, |V_{XY}(z)|, |U_{YY}(z)| \le 1$,
$|g(z)|\le 4\Phi(0)$ uniformly in $z$ and $D$.
Hence $\mathrm{Var}(g(Z_1) \mid D) \le 16\Phi(0)^2$, and therefore
$\EE[\mathrm{Var}(\hat{\gamma}_{M,N}^2\,|\, D)] = O(1/M)$.

As for the marginal term, a direct computation of the conditional expectation $\EE[g(Z_1)\,|\, D]$ gives 
\begin{equation}
\label{eq:6.tower}
\begin{aligned}
 \EE[g(Z_1)\,|\, D]
 &= \Phi(0)\EE\bigl[U_{XX}(Z) - 2V_{XY}(Z) + U_{YY}(Z) \bigm| D\bigr]  \\
 &= \frac{1}{N(N-1)}\sum_{\substack{i,j=1\\i\neq j}}^N K(X_i,X_j)
   - \frac{2}{N^2}\sum_{i,j=1}^N K(X_i,Y_j)
   + \frac{1}{N(N-1)}\sum_{\substack{i,j=1\\i\neq j}}^N K(Y_i,Y_j) \\
 &=: \bar{\gamma}_K^2(D), 
\end{aligned}
\end{equation}
where we used $\Phi(0)\EE[\cos(Z^{\mathsf{T}}(x-y))] = \Phi(x-y) = K(x,y)$ by \eqref{eq:2.3}.
Thus $\EE[\hat{\gamma}_{M,N}^2\,|\, D] = \bar{\gamma}_K^2(D)$ coincides with
the kernel $U$-statistic estimator \eqref{eq:empirical_mmd}.
It remains to bound $\mathrm{Var}(\bar{\gamma}_K^2(D))$.
Write
\[
 \bar{\gamma}_K^2(D) = T_{XX} - 2 T_{XY} + T_{YY},
\]
where
\begin{align*}
 T_{XX} &= \frac{1}{N(N-1)}\sum_{i\neq j} K(X_i,X_j),\quad
 T_{YY} = \frac{1}{N(N-1)}\sum_{i\neq j} K(Y_i,Y_j),\\
 T_{XY} &= \frac{1}{N^2}\sum_{i,j=1}^{N} K(X_i,Y_j).
\end{align*}
The terms $T_{XX}$ and $T_{YY}$ are one-sample $U$-statistics of degree $2$
with symmetric kernel $K(\cdot,\cdot)$,
and $T_{XY}$ is a two-sample $U$-statistic of degree $(1,1)$ with kernel $K(\cdot,\cdot)$.

By Hoeffding's variance formula for one-sample $U$-statistics
\cite[Section~5.2]{ser:1980},
\[
 \mathrm{Var}(T_{XX}) = \frac{2}{N(N-1)}\bigl[2(N-2)\,\zeta_1^{X} + \zeta_2^{X}\bigr],
\]
where the Hoeffding variance components are
\[
 \zeta_1^{X} = \mathrm{Var}\bigl(\EE[K(X_1,X_2)\mid X_1]\bigr),\quad
 \zeta_2^{X} = \mathrm{Var}\bigl(K(X_1,X_2)\bigr).
\]
The uniform bound $|K(x,y)|\le \Phi(0)$ yields $\zeta_1^{X},\,\zeta_2^{X}\le \Phi(0)^2$,
hence $\mathrm{Var}(T_{XX}) \le 4\Phi(0)^2/N$ for $N\ge 2$.
The same bound applies to $T_{YY}$ with corresponding components $\zeta_1^{Y}, \zeta_2^{Y}\le\Phi(0)^2$.

To estimate the cross term $T_{XY}$, we use the representation
\[
 \mathrm{Var}(T_{XY}) = \mathrm{Var}\bigl(\EE[T_{XY}\,|\, X]\bigr) + \EE\bigl[\mathrm{Var}(T_{XY}\,|\, X)\bigr].
\]
Setting $\bar{K}_{\nu}(x):=\int_{\RR^d}K(x,y)\,\nu(dy)$, we have
$\EE[T_{XY}\,|\, X] = \frac{1}{N}\sum_{i=1}^N\bar{K}_{\nu}(X_i)$, so that
\[
 \mathrm{Var}\bigl(\EE[T_{XY}\,|\, X]\bigr)
 = \frac{1}{N}\mathrm{Var}\bigl(\bar{K}_{\nu}(X_1)\bigr)
 \le \frac{\Phi(0)^2}{N},
\]
where we used $|\bar{K}_{\nu}(x)|\le \Phi(0)$.
As for the conditional variance term, given $X$ the random variables $Y_1,\ldots,Y_N$ are i.i.d., so we have 
\[
 \mathrm{Var}(T_{XY}\,|\, X)
 = \frac{1}{N^2}\sum_{j=1}^N\mathrm{Var}\left(\left.\frac{1}{N}\sum_{i=1}^N K(X_i,Y_j)\,\right|\, X\right)
 \le \frac{\Phi(0)^2}{N},
\]
since the quantity inside the variance is bounded by $\Phi(0)$ in absolute value.
Therefore, $\mathrm{Var}(T_{XY}) \le 2\Phi(0)^2/N$.

Combining these three estimates via the triangle inequality for the $L^2$-norm, we see 
\[
 \sqrt{\mathrm{Var}(\bar{\gamma}_K^2(D))}
 \le \sqrt{\mathrm{Var}(T_{XX})} + 2\sqrt{\mathrm{Var}(T_{XY})} + \sqrt{\mathrm{Var}(T_{YY})}
 \le \frac{C\,\Phi(0)}{\sqrt{N}}
\]
for some constant $C > 0$, whence $\mathrm{Var}(\bar{\gamma}_K^2(D)) = O(1/N)$.
Consequently, substituting into \eqref{eq:6.2}, we obtain
\[
 \mathrm{Var}(\hat{\gamma}_{M,N}^2) = O(1/M) + O(1/N), 
\]
completing the proof of the theorem. 

\begin{rem}\label{rem:6.var}
The decompositions \eqref{eq:6.2} and \eqref{eq:6.tower} shows that the proposed estimator
is exactly the kernel $U$-statistic $\bar{\gamma}_K^2(D)$ averaged over random frequencies:
the random Fourier representation contributes only the conditional variance term $O(1/M)$,
while the irreducible sample variance $O(1/N)$ inherent to the kernel $U$-statistic is preserved.
This decomposition is also consistent with Lemma~\ref{lem:strong_consistency}:
both the kernel $U$-statistic $\bar{\gamma}_K^2(D)\to\gamma_K^2$ as $N\to\infty$,
and the random-feature average $\hat{\gamma}_{M,N}^2 - \bar{\gamma}_K^2(D)\to 0$, $M\to\infty$,
contribute to the joint a.s.\ convergence used in Theorem~\ref{thm:convergence}.
\end{rem}

\subsection{Proof of Theorem~\ref{thm:rfu_interaction}}\label{sec:6.1interaction}

The proof follows the same template as that of Theorem~\ref{thm:rfu_unbiased} given in Section~\ref{sec:6.1}.
Since the interaction $\mathcal{R}[\nu]$ involves only the $X$-sample,
the cross terms $V_{XY}$ and $U_{YY}$ are absent, and $(K,\Phi,\rho)$ is replaced by $(W,\Psi,\rho_W)$.
We prove each part separately.

\noindent
(i) Unbiasedness.
Fix $\tilde z\in\RR^d$.
By the algebraic identity used in the derivation of \eqref{eq:3.2},
\[
 (S_c^X(\tilde z))^2 + (S_s^X(\tilde z))^2
 = N + \sum_{\substack{i,j=1\\i\neq j}}^N\cos(\tilde z^{\mathsf{T}}(X_i-X_j)).
\]
Since $X_1,\ldots,X_N$ are i.i.d.\ from $\nu$ and $X,X'$ denote independent copies with law $\nu$,
\[
 \sum_{\substack{i,j=1\\i\neq j}}^N\EE\bigl[\cos(\tilde z^{\mathsf{T}}(X_i-X_j))\bigr]
 = N(N-1)\,\EE\bigl[\cos(\tilde z^{\mathsf{T}}(X-X'))\bigr],
\]
and therefore
\begin{equation}\label{eq:6.UXX_W}
 \EE[U_{XX}(\tilde z)]
 = \frac{\EE[(S_c^X(\tilde z))^2+(S_s^X(\tilde z))^2-N]}{N(N-1)}
 = \EE\bigl[\cos(\tilde z^{\mathsf{T}}(X-X'))\bigr].
\end{equation}
Averaging over $\tilde Z_1,\ldots,\tilde Z_M\sim\rho_W/\Psi(0)$ and applying the tower property,
\begin{align*}
 \EE\bigl[\hat{\mathcal{R}}_{M,N}[\hat\nu^N]\bigr]
 &= \frac{\Psi(0)}{2M}\sum_{r=1}^M
    \EE\bigl[\EE[U_{XX}(\tilde Z_r)\,|\,\tilde Z_r]\bigr] \\
 &= \frac{\Psi(0)}{2}\,\EE\bigl[\EE[\cos(\tilde Z^{\mathsf{T}}(X-X'))\,|\,\tilde Z]\bigr]
 = \mathcal{R}[\nu],
\end{align*}
where the last equality uses the Fourier representation \eqref{eq:repre_R}.

\medskip\noindent
(ii) Complexity.
For each frequency $\tilde z=\tilde Z_r$, computing $S_c^X(\tilde z)$ and $S_s^X(\tilde z)$ requires $O(N)$ operations,
namely evaluating $\cos(\tilde z^{\mathsf{T}}X_i)$ and $\sin(\tilde z^{\mathsf{T}}X_i)$ for $i=1,\ldots,N$ and summing.
From these two sums, $U_{XX}(\tilde z)$ is computed in $O(1)$.
Since there are $M$ frequencies and assembling \eqref{eq:3.5} adds an $O(M)$ overhead,
the total cost is $O(NM)$.

\medskip\noindent
(iii) Variance.
Let
\[
 g_W(\tilde z) := \frac{\Psi(0)}{2}\,U_{XX}(\tilde z),
\]
so that $\hat{\mathcal{R}}_{M,N}[\hat\nu^N]=\frac{1}{M}\sum_{r=1}^M g_W(\tilde Z_r)$.
Note that $g_W(\tilde z)$ depends on the data $D_X:=(X_1,\ldots,X_N)$,
which is shared across all frequencies $\tilde Z_1,\ldots,\tilde Z_M$.
Hence $g_W(\tilde Z_1),\ldots,g_W(\tilde Z_M)$ are conditionally independent given $D_X$.
Conditioning on $D_X$ via the law of total variance, we get
\begin{equation}\label{eq:6.var_W}
 \mathrm{Var}\bigl(\hat{\mathcal{R}}_{M,N}[\hat\nu^N]\bigr)
 = \EE\!\left[\mathrm{Var}\!\left(\hat{\mathcal{R}}_{M,N}[\hat\nu^N]\,|\,D_X\right)\right]
 + \mathrm{Var}\!\left(\EE\!\left[\hat{\mathcal{R}}_{M,N}[\hat\nu^N]\,|\,D_X\right]\right).
\end{equation}

Let us observe the conditional term.
Given $D_X$, the frequencies $\tilde Z_1,\ldots,\tilde Z_M$ are i.i.d., so
\[
 \mathrm{Var}\bigl(\hat{\mathcal{R}}_{M,N}[\hat\nu^N]\,|\,D_X\bigr)
 = \frac{1}{M}\,\mathrm{Var}(g_W(\tilde Z_1)\,|\,D_X).
\]
From the bound $|U_{XX}(\tilde z)|\le 1$, we have $|g_W(\tilde z)|\le \Psi(0)/2$ uniformly in $\tilde z$ and $D_X$.
Hence $\mathrm{Var}(g_W(\tilde Z_1)\,|\,D_X)\le \Psi(0)^2/4$, and therefore
$\EE[\mathrm{Var}(\hat{\mathcal{R}}_{M,N}[\hat\nu^N]\,|\,D_X)] = O(1/M)$.

As for the marginal term,
a direct computation of the conditional expectation $\EE[g_W(\tilde Z_1)\,|\,D_X]$ gives
\begin{equation}\label{eq:6.tower_W}
\begin{aligned}
 \EE[g_W(\tilde Z_1)\,|\,D_X]
 &= \frac{\Psi(0)}{2}\,\EE\bigl[U_{XX}(\tilde Z)\bigm|D_X\bigr] \\
 &= \frac{1}{2N(N-1)}\sum_{\substack{i,j=1\\i\neq j}}^N W(X_i,X_j)
 \;=:\; \bar{\mathcal{R}}_W(D_X),
\end{aligned}
\end{equation}
where we used $\Psi(0)\EE[\cos(\tilde Z^{\mathsf{T}}(x-y))]=\Psi(x-y)=W(x,y)$
by the analogue of \eqref{eq:2.3} for $W$.
Thus $\EE[\hat{\mathcal{R}}_{M,N}[\hat\nu^N]\,|\,D_X]=\bar{\mathcal{R}}_W(D_X)$ coincides with the
kernel $U$-statistic estimator of $\mathcal{R}[\nu]$.
It remains to bound $\mathrm{Var}(\bar{\mathcal{R}}_W(D_X))$.

Now $\bar{\mathcal{R}}_W(D_X)$ is a one-sample $U$-statistic of degree $2$ with symmetric kernel $\tfrac{1}{2}W(\cdot,\cdot)$.
By Hoeffding's variance formula for one-sample $U$-statistics
\cite[Section~5.2]{ser:1980},
\[
 \mathrm{Var}(\bar{\mathcal{R}}_W(D_X))
 = \frac{2}{N(N-1)}\bigl[2(N-2)\,\zeta_1 + \zeta_2\bigr],
\]
where the Hoeffding variance components are
\[
 \zeta_1 = \mathrm{Var}\!\left(\EE\!\left[\tfrac{1}{2}W(X_1,X_2)\bigm|X_1\right]\right),\qquad
 \zeta_2 = \mathrm{Var}\!\left(\tfrac{1}{2}W(X_1,X_2)\right).
\]
The uniform bound $|W(x,y)|\le \Psi(0)$ yields $\zeta_1,\,\zeta_2\le \Psi(0)^2/4$,
hence $\mathrm{Var}(\bar{\mathcal{R}}_W(D_X))\le \Psi(0)^2/N$ for $N\ge 2$.

Consequently, substituting into \eqref{eq:6.var_W}, we obtain
\[
 \mathrm{Var}\bigl(\hat{\mathcal{R}}_{M,N}[\hat\nu^N]\bigr) = O(1/M) + O(1/N),
\]
completing the proof of the theorem.
\qed

\subsection{Proof of Lemma~\ref{lem:strong_consistency}}\label{sec:6.LemSC}

Decompose $\hat{\gamma}_{M_n,N_n}^2 = \bar{\gamma}_K^2(D_{N_n}) + B_{M_n}$
as in the proof of Theorem~\ref{thm:rfu_unbiased}(iii) in Section~\ref{sec:6.1},
where $D_{N_n}=(X_1,\ldots,X_{N_n},Y_1,\ldots,Y_{N_n})$ and
\[
 B_{M_n} := \frac{1}{M_n}\sum_{r=1}^{M_n}\bigl[g(Z_r) - \bar{\gamma}_K^2(D_{N_n})\bigr].
\]

By Hoeffding's theorem on the strong law of large numbers for two-sample $U$-statistics
\cite[Section~5.4]{ser:1980},
$\bar{\gamma}_K^2(D_{N_n})\to \gamma_K(\mu,\nu)^2$ as $n\to\infty$, a.s.

Since cosines and sines are bounded by $1$, the Cauchy--Schwarz inequality
yields $S_c(z)^2+S_s(z)^2\le N^2$.
Combined with \eqref{eq:3.2}--\eqref{eq:3.3}, this gives $|U_{XX}(z)|, |V_{XY}(z)|, |U_{YY}(z)|\le 1$ for every $z\in\RR^d$ and every realization of the data,
so
\[
 |g(z)| \le \Phi(0)\bigl(|U_{XX}(z)|+2|V_{XY}(z)|+|U_{YY}(z)|\bigr)\le 4\Phi(0),
\]
uniformly in $z$ and $D_{N_n}$.
In particular $|\bar{\gamma}_K^2(D_{N_n})| = |\EE[g(Z_1)\mid D_{N_n}]|\le 4\Phi(0)$.

Fix a realization of $D_{N_n}$ and define
\[
 \xi_r := g(Z_r) - \bar{\gamma}_K^2(D_{N_n}), \quad r=1,\ldots,M_n.
\]
Since $Z_1,\ldots,Z_{M_n}$ are i.i.d.\ and independent of $D_{N_n}$,
the $\xi_r$ are, conditionally on $D_{N_n}$, i.i.d.\ with mean zero,
and take values in the interval
\[
 J(D_{N_n}) := \bigl[-4\Phi(0)-\bar{\gamma}_K^2(D_{N_n}),\;4\Phi(0)-\bar{\gamma}_K^2(D_{N_n})\bigr]\subset \RR,
\]
which has the length $8\Phi(0)$.
Hoeffding's inequality \cite[Theorem~2]{hoe:1963} for bounded i.i.d.\ centered random variables therefore gives,
for every $t>0$,
\[
 \mathbb{P}\left(\left.\left|\sum_{r=1}^{M_n}\xi_r\right|\ge t \,\right|\, D_{N_n}\right)
 \le 2\exp\left(-\frac{2 t^2}{M_n(8\Phi(0))^2}\right).
\]
Let $\varepsilon>0$. Setting $t=M_n\varepsilon$ and dividing inside the probability by $M_n$, we get
\[
 \mathbb{P}\left(|B_{M_n}|\ge\varepsilon \,|\, D_{N_n}\right)
 \le 2\exp\left(-\frac{M_n\varepsilon^2}{32\Phi(0)^2}\right).
\]
The right-hand side does not depend on the realization of $D_{N_n}$, so taking expectation over $D_{N_n}$ preserves the bound:
\begin{equation}\label{eq:6.lemmaHoef}
 \mathbb{P}\bigl(|B_{M_n}|\ge\varepsilon\bigr) \le 2\exp\left(-\frac{M_n\varepsilon^2}{32\Phi(0)^2}\right).
\end{equation}
Under the rate condition $M_n/\log n\to\infty$,
the series $\sum_{n\ge 1}\exp(-M_n\varepsilon^2/(32\Phi(0)^2))$ converges for every fixed $\varepsilon>0$,
because $M_n\varepsilon^2/(32\Phi(0)^2)\ge 2\log n$ for all sufficiently large $n$.
The first Borel--Cantelli lemma then yields
$\mathbb{P}(|B_{M_n}|\ge\varepsilon\;\text{infinitely often})=0$
for every $\varepsilon>0$,
hence $B_{M_n}\to 0$ as $n\to\infty$, a.s.

Combining the two parts, we arrive at $\hat{\gamma}_{M_n,N_n}^2\to \gamma_K(\mu,\nu)^2$, a.s.
\qed

\subsection{Proof of Theorem~\ref{thm:convergence}}\label{sec:6.2}

The proof extends \cite[Theorem~3.1]{nak:2024sb}, which treats the $c=0$ specialization at the population level,
in two ways: the empirical penalty problem \eqref{eq:4.2} replaces the exact penalty problem along almost every sample path
via the strong consistency established in Step~$(\mathrm{i})$ below, and the running interaction term
$c\int_0^T\mathcal{R}[\mathcal{L}(X_t)]\,dt$ enters the chain of inequalities through the same path-wise concentration argument.
The condition $\lambda_n\varepsilon_n\to 0$ controls the resulting approximation error.

For notational simplicity, we write
\[
 \mathcal{C}(u) := \EE\!\left[\int_0^T|u(t,X_t)|^2\,dt\right],\quad
 \hat{\mathcal{C}}_N(u) := \frac{1}{N}\sum_{i=1}^N\int_0^T |u(t,X_t^{(i)})|^2 dt,\quad
 \mathcal{R}_T(u) := \int_0^T\mathcal{R}[\mathcal{L}(X_t)]\,dt,
\]
and we let $\hat{\mathcal{R}}_T(u)$ denote the empirical counterpart of $\mathcal{R}_T(u)$
on the time grid $\{t_k\}$ used in Algorithm~\ref{alg:sbp},
i.e., $\hat{\mathcal{R}}_T(u):=\sum_{k=1}^{q}\hat{\mathcal{R}}_{M,N}[\hat\nu^N_{t_k}]\,(t_k-t_{k-1})$,
with the number of grid points $q$ fixed and independent of $n$.

\paragraph{Step~$(\mathrm{i})$: strong consistency.}
We show that, on a measurable set of probability one,
the empirical terminal MMD${}^2$, the empirical energy, and the empirical interaction integral
all converge to their population counterparts, uniformly over the relevant family of laws and controls.

By Lemma~\ref{lem:strong_consistency}, applied to each fixed pair $(\nu,\mu_T)$,
and Lemma~\ref{lem:energy_consistency}, applied to each fixed admissible control $u$,
combined via a countable union over $n\in\mathbb{N}$,
there exists a measurable set of probability one on which both
$\hat{\gamma}_{M_n,N_n}^2(\nu,\mu_T)\to\gamma_K(\nu,\mu_T)^2$ and
$\hat{\mathcal{C}}_{N_n}(u)\to\mathcal{C}(u)$ as $n\to\infty$,
for $\nu\in\{\mu_T\}\cup\{\mathcal{L}(X_T^{(n)}):n\ge 1\}$
and $u\in\{u^*\}\cup\{u_n:n\ge 1\}$,
where $u^*$ is an optimal control for the constrained problem
$\inf\{\frac{1}{2}\mathcal{C}(u)+c\mathcal{R}_T(u):u\in\mathcal{A},\,\mathcal{L}(X_T)=\mu_T\}$.

The same Hoeffding plus Borel--Cantelli argument used in the proof of Lemma~\ref{lem:strong_consistency}
applies to the empirical interaction $\hat{\mathcal{R}}_{M,N}[\hat\nu^N]$.
Assumption $(A)$ for $W$ gives $|W(x,y)|\le \Psi(0)$ and hence
$|\hat{\mathcal{R}}_{M,N}[\hat\nu^N]|\le\Psi(0)/2$ uniformly.
Decomposing $\hat{\mathcal{R}}_{M_n,N_n}[\hat\nu^{N_n}] = \bar{\mathcal{R}}_W(D_{X,N_n}) + B^W_{M_n}$,
where $\bar{\mathcal{R}}_W(D_{X,N_n}):=\frac{1}{2N_n(N_n-1)}\sum_{i\neq j}W(X_i,X_j)$ is the kernel $U$-statistic of $W$ from \eqref{eq:6.tower_W}
and $B^W_{M_n}$ is the random-feature residual,
Hoeffding's inequality applied to $B^W_{M_n}$ gives
$\mathbb{P}(|B^W_{M_n}|\ge\varepsilon)\le 2\exp(-M_n\varepsilon^2/(8\Psi(0)^2))$.
Combining with the SLLN for one-sample $U$-statistics
\cite[Section~5.4]{ser:1980} applied to $\bar{\mathcal{R}}_W(D_{X,N_n})$ and the rate condition $M_n/\log n\to\infty$, we get 
\begin{equation}\label{eq:6.R_LLN}
 \hat{\mathcal{R}}_{M_n,N_n}[\hat\nu^{N_n}]\;\xrightarrow{\mathrm{a.s.}}\;\mathcal{R}[\nu]\qquad (n\to\infty),
\end{equation}
for each fixed $\nu$.
A further countable union over the time grid $\{t_k\}_{k=1}^{q}$
and over the countable family of laws $\{\mathcal{L}(X_{t_k}^{(n)})\}_{n,k\ge 1}$
yields a measurable set $\Omega_0$ of probability one on which all of the above limits hold simultaneously.

Let $\omega\in\Omega_0$ be fixed.
By \eqref{eq:4.cond}, there exists $n_0=n_0(\omega)$ such that for $n\ge n_0$,
\begin{equation}\label{eq:6.LLN}
\begin{aligned}
 &\bigl|\hat{\gamma}_{M_n,N_n}^2(\mathcal{L}(X_T^{(n)}),\mu_T) - \gamma_K(\mathcal{L}(X_T^{(n)}),\mu_T)^2\bigr|\le \varepsilon_n,\quad
  \hat{\gamma}_{M_n,N_n}^2(\mu_T,\mu_T)\le \varepsilon_n,\\
 &\bigl|\hat{\mathcal{C}}_{N_n}(u_n) - \mathcal{C}(u_n)\bigr| \le \varepsilon_n,\quad
  \bigl|\hat{\mathcal{C}}_{N_n}(u^*) - \mathcal{C}(u^*)\bigr| \le \varepsilon_n,\\
 &\bigl|\hat{\mathcal{R}}_T(u_n)-\mathcal{R}_T(u_n)\bigr|\le \varepsilon_n,\quad
  \bigl|\hat{\mathcal{R}}_T(u^*)-\mathcal{R}_T(u^*)\bigr|\le \varepsilon_n,
\end{aligned}
\end{equation}
where the last line uses the uniformity of the bound over the finite time grid.

\paragraph{Step~$(\mathrm{ii})$: $\sqrt{\lambda_n}\,\gamma_K(\mathcal{L}(X_T^{(n)}),\mu_T)\to 0$.}
We show that the population terminal discrepancy vanishes faster than $1/\sqrt{\lambda_n}$, almost surely.
The $\varepsilon_n$-optimality of $u_n$ for \eqref{eq:4.2} gives
\begin{equation}\label{eq:6.7}
 \frac{1}{2}\hat{\mathcal{C}}_{N_n}(u_n)
 + c\hat{\mathcal{R}}_T(u_n)
 + \lambda_n\,\hat{\gamma}_{M_n,N_n}^2(\mathcal{L}(X_T^{(n)}),\mu_T)
 \le \hat{H}_{\lambda_n,c,M_n,N_n} + \varepsilon_n.
\end{equation}
The optimal control $u^*$ for the constrained MFG satisfies
$\mathcal{L}(X_T^*)=\mu_T$ and $\frac{1}{2}\mathcal{C}(u^*)+c\mathcal{R}_T(u^*)=H^*_c$.
Since $u^*$ is admissible for \eqref{eq:4.2}, by \eqref{eq:6.LLN},
\begin{equation}\label{eq:6.upper}
 \hat{H}_{\lambda_n,c,M_n,N_n}
 \le \frac{1}{2}\hat{\mathcal{C}}_{N_n}(u^*) + c\hat{\mathcal{R}}_T(u^*) + \lambda_n\,\hat{\gamma}_{M_n,N_n}^2(\mu_T,\mu_T)
 \le H^*_c + \bigl(\tfrac{1}{2}+c\bigr)\varepsilon_n + \lambda_n \varepsilon_n.
\end{equation}
Substituting \eqref{eq:6.upper} into \eqref{eq:6.7} and using \eqref{eq:6.LLN} on the left-hand side,
\begin{equation}\label{eq:6.8}
 \frac{1}{2}\mathcal{C}(u_n)
 + c\mathcal{R}_T(u_n)
 + \lambda_n\,\gamma_K(\mathcal{L}(X_T^{(n)}),\mu_T)^2
 \le H^*_c + \bigl(\tfrac{5}{2}+2c\bigr)\varepsilon_n + 2\lambda_n\varepsilon_n.
\end{equation}
Since $\lambda_n\varepsilon_n\to 0$ and $\varepsilon_n\to 0$ by \eqref{eq:4.cond},
the right-hand side equals $H^*_c+o(1)$.
Under $(A)$ the kernel $W$ is non-negative, so $\mathcal{R}[\nu]\ge 0$ for every $\nu$ and hence $\mathcal{R}_T(u_n)\ge 0$.
Therefore $\lambda_n\gamma_K(\mathcal{L}(X_T^{(n)}),\mu_T)^2\le H^*_c+o(1)$ is bounded,
and $\gamma_K(\mathcal{L}(X_T^{(n)}),\mu_T)\to 0$ since $\lambda_n\to\infty$.

It remains to upgrade this to $\lambda_n\gamma_n^2\to 0$, where $\gamma_n:=\gamma_K(\mathcal{L}(X_T^{(n)}),\mu_T)$.
The key ingredient is the following tightness and lower-semicontinuity property,
which is also used in Step~$(\mathrm{iii})$ below.

\medskip
\emph{Tightness and lower-semicontinuity.}
By Girsanov's theorem applied to \eqref{eq:4.0}, for every $u\in\mathcal{A}$
the law $Q^u:=\mathcal{L}(X^u)$ on $C([0,T];\RR^d)$ is absolutely continuous with respect to the reference law $P^\sigma$
(under which the canonical process is a Brownian motion of variance $\sigma^2$ started from $\mu_0$),
and the relative entropy admits the identity
\begin{equation}\label{eq:6.entropy}
 H(Q^u\,|\,P^\sigma)
 = \frac{1}{2\sigma^2}\,\mathcal{C}(u).
\end{equation}
For the sequence $\{u_n\}\subset\mathcal{A}$ obtained from \eqref{eq:4.2},
the bound \eqref{eq:6.8} together with $\mathcal{R}_T(u_n)\ge 0$ gives
\begin{equation}\label{eq:6.C_bdd}
 \mathcal{C}(u_n)\;\le\;2H^*_c + o(1),\qquad
 \mathcal{R}_T(u_n)\;\le\;\frac{H^*_c+o(1)}{c}\quad(\text{when }c>0).
\end{equation}
We claim that $\{Q^{u_n}\}_{n\ge 1}$ is tight on $C([0,T];\RR^d)$.
Decompose the SDE solution as $X_t^{(n)}=X_0+A_t^{(n)}+\sigma B_t$
with the drift integral $A_t^{(n)}:=\int_0^t u_n(r,X_r^{(n)})\,dr$.
The Brownian component $\sigma B$ has paths in $C([0,T];\RR^d)$, with the law of $\sigma B$ being Wiener measure, hence tight on $C([0,T];\RR^d)$.
For the drift component, Cauchy--Schwarz gives
$|A_t^{(n)}-A_s^{(n)}|\le\sqrt{t-s}\,V_n$ with $V_n:=\bigl(\int_0^T|u_n(r,X_r^{(n)})|^2\,dr\bigr)^{1/2}$,
so the modulus of continuity satisfies $\omega_{A^{(n)}}(\delta)\le\sqrt{\delta}\,V_n$;
by Markov's inequality and $\EE V_n^2=\mathcal{C}(u_n)$ uniformly bounded via \eqref{eq:6.C_bdd},
$\mathbb{P}(\omega_{A^{(n)}}(\delta)\ge\eta)\le\delta\,\EE V_n^2/\eta^2\to 0$ as $\delta\to 0$, uniformly in $n$.
Combined with the tight initial law $\mu_0$ and the tight Brownian component,
the modulus-of-continuity criterion for tightness on $C([0,T];\RR^d)$ (Billingsley \cite[Theorem~7.3]{bil:1999})
yields the tightness of $\{Q^{u_n}\}$.

Let $\hat Q$ be any weak subsequential limit of $\{Q^{u_n}\}$,
and let $\hat Q_t$ denote its marginal at time $t\in[0,T]$.
For each fixed $t$, the evaluation map
$\pi_t:C([0,T];\RR^d)\to\RR^d$, $\omega\mapsto\omega(t)$,
is continuous, so the continuous mapping theorem
\cite[Theorem~2.7]{bil:1999} applied to $\pi_t$
yields $\mathcal{L}(X_t^{(n)})=Q^{u_n}\circ\pi_t^{-1}\to\hat Q\circ\pi_t^{-1}=\hat Q_t$ weakly on $\cP(\RR^d)$
along the subsequence.
At $t=T$, combined with $\gamma_n\to 0$ and the fact that $\gamma_K$ metrizes the weak topology on $\cP(\RR^d)$ under $(A)$ (Remark~\ref{rem:2.1}),
\begin{equation}\label{eq:6.terminal}
 \hat Q_T = \mu_T.
\end{equation}
By the lower-semicontinuity of relative entropy under the weak topology
(see, e.g., \cite[Lemma~1.4.3]{dup-ell:1997}) and \eqref{eq:6.entropy},
\begin{equation}\label{eq:6.lsc_C}
 \tfrac{1}{2}\mathcal{C}(\hat u)\;:=\;\sigma^2\,H(\hat Q\,|\,P^\sigma)
 \;\le\;\sigma^2\,\liminf_{n\to\infty}H(Q^{u_n}\,|\,P^\sigma)
 \;=\;\tfrac{1}{2}\liminf_{n\to\infty}\mathcal{C}(u_n),
\end{equation}
where $\hat u$ denotes the drift corresponding to $\hat Q$.
Moreover, since $W\in C_b(\RR^d\times\RR^d)$ under $(A)$,
the functional $\nu\mapsto\mathcal{R}[\nu]=\frac{1}{2}\iint W(x,y)\nu(dx)\nu(dy)$ is continuous on $\cP(\RR^d)$,
and the marginal convergence $\mathcal{L}(X_t^{(n)})\to\hat Q_t$ established above yields
$\mathcal{R}[\mathcal{L}(X_t^{(n)})]\to\mathcal{R}[\hat Q_t]$ for every $t\in[0,T]$.
Combined with the uniform bound $|\mathcal{R}[\mathcal{L}(X_t^{(n)})]|\le\Psi(0)/2$
and bounded convergence on $[0,T]$,
\begin{equation}\label{eq:6.cont_R}
 \mathcal{R}_T(u_n)\;\xrightarrow{}\;\mathcal{R}_T(\hat u)\quad(n\to\infty,\;\text{along the subsequence}).
\end{equation}
Combining \eqref{eq:6.terminal}, \eqref{eq:6.lsc_C}, \eqref{eq:6.cont_R}, and the constrained definition of $H^*_c$,
\begin{equation}\label{eq:6.liminf}
 H^*_c\;\le\;\tfrac{1}{2}\mathcal{C}(\hat u)+c\mathcal{R}_T(\hat u)
 \;\le\;\liminf_{n\to\infty}\Bigl[\tfrac{1}{2}\mathcal{C}(u_n)+c\mathcal{R}_T(u_n)\Bigr]
\end{equation}
along every weakly convergent subsequence.
A standard sub-subsequence argument extends \eqref{eq:6.liminf} to the full sequence.

\medskip
\emph{Conclusion of Step~$(\mathrm{ii})$.}
Suppose, for contradiction, that $\limsup_n\lambda_n\gamma_n^2\ge 5\delta$ for some $\delta>0$.
Extract a subsequence (still denoted $\{n\}$) along which $\lambda_n\gamma_n^2\ge 4\delta$ for all sufficiently large $n$.
From \eqref{eq:6.8},
\[
 \tfrac{1}{2}\mathcal{C}(u_n)+c\mathcal{R}_T(u_n)\;\le\;H^*_c-4\delta+o(1)\quad\text{along the subsequence}.
\]
Applying \eqref{eq:6.liminf} along the same subsequence,
\[
 H^*_c\;\le\;\liminf_{n\to\infty}\Bigl[\tfrac{1}{2}\mathcal{C}(u_n)+c\mathcal{R}_T(u_n)\Bigr]\;\le\;H^*_c-4\delta,
\]
which is impossible.
Hence $\lim_{n\to\infty}\lambda_n\gamma_n^2=0$,
i.e., $\sqrt{\lambda_n}\,\gamma_K(\mathcal{L}(X_T^{(n)}),\mu_T)\to 0$,
which proves conclusion~(i) of Theorem~\ref{thm:convergence}.

\paragraph{Step~$(\mathrm{iii})$: $\hat{H}_{\lambda_n,c,M_n,N_n}\to H^*_c$.}
We show that the empirical penalty value converges to the constrained MFG value, almost surely.

The upper bound $\limsup_{n\to\infty}\hat{H}_{\lambda_n,c,M_n,N_n}\le H^*_c$ follows from \eqref{eq:6.upper}
together with $\lambda_n\varepsilon_n\to 0$ and $\varepsilon_n\to 0$.

For the lower bound, \eqref{eq:6.7} together with $\hat{\gamma}_{M_n,N_n}^2\ge 0$ gives
\[
 \hat{H}_{\lambda_n,c,M_n,N_n}+\varepsilon_n
 \;\ge\;\tfrac{1}{2}\hat{\mathcal{C}}_{N_n}(u_n)+c\hat{\mathcal{R}}_T(u_n).
\]
By \eqref{eq:6.LLN},
$\hat{\mathcal{C}}_{N_n}(u_n)\ge \mathcal{C}(u_n)-\varepsilon_n$ and $\hat{\mathcal{R}}_T(u_n)\ge \mathcal{R}_T(u_n)-\varepsilon_n$,
so
\[
 \hat{H}_{\lambda_n,c,M_n,N_n}+\bigl(\tfrac{3}{2}+c\bigr)\varepsilon_n
 \;\ge\;\tfrac{1}{2}\mathcal{C}(u_n)+c\mathcal{R}_T(u_n).
\]
Taking $\liminf_n$ and applying \eqref{eq:6.liminf},
\[
 \liminf_{n\to\infty}\hat{H}_{\lambda_n,c,M_n,N_n}
 \;\ge\;\liminf_{n\to\infty}\Bigl[\tfrac{1}{2}\mathcal{C}(u_n)+c\mathcal{R}_T(u_n)\Bigr]
 \;\ge\;H^*_c.
\]
Combined with the upper bound, $\lim_n\hat{H}_{\lambda_n,c,M_n,N_n}=H^*_c$,
which proves conclusion~(ii) of Theorem~\ref{thm:convergence}.
This completes the proof.
\qed

\begin{rem}\label{rem:6.3}
By Lemma~\ref{lem:strong_consistency}, the strong law gives
$\hat{\gamma}_{M,N}^2-\gamma_K^2 = o(1)$ a.s.\ as $M,N\to\infty$, so for any deterministic sequences
$M_n,N_n\to\infty$ the path-wise rate of this convergence may be absorbed into $\varepsilon_n$
without imposing additional quantitative conditions on $(M_n,N_n)$.
This contrasts with approaches based on biased random-feature estimators (cf.\ Remark~\ref{rem:3.2}),
which require explicit control of the approximation error and an extra penalty correction.
In the stochastic optimization setting of Algorithm~\ref{alg:sbp},
quantitative bounds on $\varepsilon_n$ follow from the variance estimate of Theorem~\ref{thm:rfu_unbiased}(iii):
the in-probability counterpart of $\lambda_n\varepsilon_n\to 0$ amounts to $\lambda_n/\sqrt{M_n}\to 0$ and $\lambda_n/\sqrt{N_n}\to 0$.
\end{rem}

\subsection{Proof of Proposition~\ref{prop:convergence_general}}\label{sec:6.2gen}

The proof of Theorem~\ref{thm:convergence} given in Section~\ref{sec:6.2} uses three structural properties of the running cost:
non-negativity, weak continuity, and a.s.\ consistency of the empirical estimator.
All three are now assumed directly:
the consistency hypothesis \eqref{eq:4.R_general_consistency} replaces the consistency
obtained in Step~$(\mathrm{i})$ via Hoeffding's inequality and Borel--Cantelli applied to the random-feature residual $B^W_{M_n}$,
and Steps~$(\mathrm{ii})$--$(\mathrm{iii})$ carry over verbatim with $\hat{\mathcal{R}}_{M_n,N_n}$ replaced by $\hat{\mathcal{R}}_{N_n}$.
\qed

\subsection{Proof of Proposition~\ref{prop:rate}}\label{sec:6.3rate}

We refine the path-wise inequalities \eqref{eq:6.LLN}
in the proof of Theorem~\ref{thm:convergence} given in Section~\ref{sec:6.2} to have explicit Hoeffding rates,
and substitute the refined bounds into the chain \eqref{eq:6.upper}--\eqref{eq:6.8}.

\paragraph{Step~$(\mathrm{i})$: rate for $\hat{\gamma}_{M_n,N_n}^2$.}
We establish a quantitative Hoeffding rate for the random-feature MMD${}^2$ estimator,
which refines the qualitative input \eqref{eq:6.LLN}.

By the Hoeffding bound \eqref{eq:6.lemmaHoef} in the proof of Lemma~\ref{lem:strong_consistency},
applied to the random-feature residual $B_{M_n}$ conditional on the data,
\[
 \mathbb{P}\bigl(|B_{M_n}|\ge t\bigr)
 \le 2\exp\left(-\frac{M_n t^2}{32\Phi(0)^2}\right), \qquad t>0.
\]
Setting $t = 8\Phi(0)\sqrt{2\log n / M_n}$ gives $\mathbb{P}(|B_{M_n}|\ge t)\le 2n^{-2}$,
which is summable. So the Borel--Cantelli lemma yields
\begin{equation}\label{eq:6.B_rate}
 |B_{M_n}|\le 8\Phi(0)\sqrt{2\log n / M_n}\quad\text{for all sufficiently large }n,\;\text{a.s.}
\end{equation}
Next, $\bar{\gamma}_K^2(D_{N_n})$ is the two-sample $U$-statistic with kernel
$h(x,x',y,y'):=K(x,x')-K(x,y')-K(y,x')+K(y,y')$ bounded by $4\Phi(0)$.
By Hoeffding's inequality for two-sample $U$-statistics \cite[Theorem~5.6.1.A]{ser:1980},
for any fixed pair $(\mu,\nu)$ and any $t>0$,
\[
 \mathbb{P}\bigl(\bigl|\bar{\gamma}_K^2(D_{N_n}) - \gamma_K(\mu,\nu)^2\bigr|\ge t\bigr)
 \le 2\exp\!\left(-\tfrac{N_n t^2}{32\Phi(0)^2}\right).
\]
The same Borel--Cantelli argument gives, for every fixed pair $(\mu,\nu)$,
\begin{equation}\label{eq:6.U_rate}
 \bigl|\bar{\gamma}_K^2(D_{N_n}) - \gamma_K(\mu,\nu)^2\bigr|
 \le 8\Phi(0)\sqrt{2\log n / N_n}\quad\text{for all sufficiently large }n,\;\text{a.s.}
\end{equation}
Under (H2), the samples at iteration $n$ are independent of $u_n$, so we may apply
\eqref{eq:6.B_rate}--\eqref{eq:6.U_rate} conditionally on $u_n$
to the pair $(\nu,\mu_1) = (\mathcal{L}(X_1^{(n)}),\mu_1)$ and to the pair $(\mu_1,\mu_1)$, for which $\gamma_K^2=0$.
Combining via $\hat{\gamma}_{M_n,N_n}^2 = \bar{\gamma}_K^2(D_{N_n}) + B_{M_n}$ and taking a countable union over $n\ge 1$, we obtain a measurable event $\Omega_1$ of probability one such that for $\omega\in\Omega_1$,
\begin{equation}\label{eq:6.gamma_rate}
\begin{aligned}
 \bigl|\hat{\gamma}_{M_n,N_n}^2(\mathcal{L}(X_1^{(n)}),\mu_1)
 &- \gamma_K(\mathcal{L}(X_1^{(n)}),\mu_1)^2\bigr|\le \delta^\gamma_n,\\
 \hat{\gamma}_{M_n,N_n}^2(\mu_1,\mu_1)&\le \delta^\gamma_n,
\end{aligned}
\end{equation}
for all sufficiently large $n$, where
$\delta^\gamma_n := 16\Phi(0)\bigl(\sqrt{2\log n/M_n}+\sqrt{2\log n/N_n}\bigr)$.

\paragraph{Step~$(\mathrm{ii})$: rate for $\hat{\mathcal{C}}_{N_n}$.}
We establish a Hoeffding rate for the empirical energy.

Recall the population energy and its empirical counterpart on $N$ samples,
$\mathcal{C}(u) := \EE[\int_0^1|u(t,X_t)|^2\,dt]$ and
$\hat{\mathcal{C}}_N(u) := \frac{1}{N}\sum_{i=1}^N\int_0^1|u(t,X_t^{(i)})|^2\,dt$,
introduced in Section~\ref{sec:6.2} with $T=1$.
Under (H1), the integrand $\int_0^1|u(t,X_t)|^2 dt$ is bounded by $L^2$ for $u\in\{u^*,u_n\}$ and for every path.
Hoeffding's inequality for sums of bounded i.i.d.\ random variables gives, for every $t>0$,
\[
 \mathbb{P}\bigl(|\hat{\mathcal{C}}_{N_n}(u) - \mathcal{C}(u)|\ge t\bigr)
 \le 2\exp\!\left(-\tfrac{2N_n t^2}{L^4}\right).
\]
By (H2), $u_n$ is independent of the samples at iteration $n$, so the same Borel--Cantelli argument applies conditionally on $u_n$ and yields
\begin{equation}\label{eq:6.energy_rate}
 |\hat{\mathcal{C}}_{N_n}(u_n) - \mathcal{C}(u_n)|\;\vee\;|\hat{\mathcal{C}}_{N_n}(u^*) - \mathcal{C}(u^*)|
 \le \delta^{\mathcal{C}}_n
 \quad\text{for all sufficiently large }n,\;\text{a.s.},
\end{equation}
where $\delta^{\mathcal{C}}_n := L^2\sqrt{2\log n/N_n}$.

\paragraph{Step~$(\mathrm{iii})$: assembly.}
We combine the bounds of Steps~$(\mathrm{i})$--$(\mathrm{ii})$ with the chain \eqref{eq:6.upper}--\eqref{eq:6.8}
to obtain \eqref{eq:4.rate_emp} and \eqref{eq:4.rate}.

Working on $\Omega_1$ intersected with the analogous event from Step~$(\mathrm{ii})$, both events of probability one,
we substitute \eqref{eq:6.gamma_rate} and \eqref{eq:6.energy_rate} for the qualitative input \eqref{eq:6.LLN}
in the chain \eqref{eq:6.upper}--\eqref{eq:6.8} of the proof of Theorem~\ref{thm:convergence},
keeping the empirical quantities on both sides.
The upper bound \eqref{eq:6.upper} becomes
$\hat{H}_{\lambda_n,M_n,N_n}\le H^* + \tfrac{1}{2}\delta^{\mathcal{C}}_n + \lambda_n\delta^\gamma_n$,
and combining with the $\varepsilon_n$-optimality \eqref{eq:6.7} of $u_n$ yields, after dropping the nonnegative term $\tfrac{1}{2}\hat{\mathcal{C}}_{N_n}(u_n)\ge 0$ on the left-hand side,
\begin{equation}\label{eq:6.empbound}
 \lambda_n\,\hat{\gamma}_{M_n,N_n}^2\bigl(\mathcal{L}(X_1^{(n)}),\mu_1\bigr)
 \le H^* + \varepsilon_n + \tfrac{1}{2}\delta^{\mathcal{C}}_n + \lambda_n\delta^\gamma_n.
\end{equation}
Since both $\delta^{\mathcal{C}}_n$ and $\delta^\gamma_n$ carry the factor $\sqrt{\log n/N_n}$
and $\lambda_n\to\infty$, the ratio $\delta^{\mathcal{C}}_n/\lambda_n$ is dominated by $\delta^\gamma_n$.
Dividing \eqref{eq:6.empbound} by $\lambda_n$ then yields \eqref{eq:4.rate_emp}
with a constant $C^*$ that may be taken as $C^*=17\Phi(0)+L^2$.
The population-level bound \eqref{eq:4.rate} now follows by adding $\delta^\gamma_n$
to the left-hand side of \eqref{eq:6.empbound} via $\hat{\gamma}_{M_n,N_n}^2\bigl(\mathcal{L}(X_1^{(n)}),\mu_1\bigr)\ge \gamma_K\bigl(\mathcal{L}(X_1^{(n)}),\mu_1\bigr)^2 - \delta^\gamma_n$ from \eqref{eq:6.gamma_rate},
which absorbs into the same $\sqrt{\log n/M_n}+\sqrt{\log n/N_n}$ term at the cost of an extra factor $2$ on the leading constant.
The asymptotic rate under the polynomial schedule follows by direct substitution.
\qed

\subsection{Proof of Proposition~\ref{prop:rate_mfg}}\label{sec:6.3rate_mfg}

The argument of Proposition~\ref{prop:rate} extends to the MFG case
once one adds a Hoeffding rate for the empirical interaction $\hat{\mathcal{R}}_{M_n,N_n}[\hat\nu^{N_n}_{t}]$
that parallels \eqref{eq:6.B_rate}--\eqref{eq:6.gamma_rate}.
Decompose $\hat{\mathcal{R}}_{M_n,N_n}[\hat\nu^{N_n}_{t}] = \bar{\mathcal{R}}_W(D_{X_t,N_n})+B^W_{M_n,t}$
as in \eqref{eq:6.tower_W}, where $|\bar{\mathcal{R}}_W|\le \Psi(0)/2$ and $|B^W_{M_n,t}|\le \Psi(0)$.
Hoeffding's inequality applied to $B^W_{M_n,t}$ conditionally on the data gives
$\mathbb{P}(|B^W_{M_n,t}|\ge\varepsilon)\le 2\exp(-M_n\varepsilon^2/(2\Psi(0)^2))$,
and Hoeffding's inequality for the one-sample $U$-statistic $\bar{\mathcal{R}}_W$
gives $\mathbb{P}(|\bar{\mathcal{R}}_W(D_{X_t,N_n})-\mathcal{R}[\mathcal{L}(X_t^{(n)})]|\ge\varepsilon)\le 2\exp(-N_n\varepsilon^2/(2\Psi(0)^2))$ (see, e.g., \cite[Section~5.6]{ser:1980}).
Setting $\varepsilon=2\Psi(0)\sqrt{2\log n/M_n}$ and $\varepsilon=2\Psi(0)\sqrt{2\log n/N_n}$, respectively,
and applying Borel--Cantelli as in Step~$(\mathrm{i})$ of Section~\ref{sec:6.3rate},
\begin{equation}\label{eq:6.R_rate_mfg}
 \bigl|\hat{\mathcal{R}}_{M_n,N_n}[\hat\nu^{N_n}_t]-\mathcal{R}[\mathcal{L}(X_t^{(n)})]\bigr|
 \;\le\;\delta^{\mathcal{R}}_n:=4\Psi(0)\Bigl(\sqrt{\tfrac{2\log n}{M_n}}+\sqrt{\tfrac{2\log n}{N_n}}\Bigr)
\end{equation}
for all sufficiently large $n$ and every fixed $t\in[0,T]$, almost surely.
Taking a countable union over the finite time grid $\{t_k\}_{k=1}^{q}$ used in Algorithm~\ref{alg:sbp}
preserves the rate, and time-integration over $[0,T]$ multiplies $\delta^{\mathcal{R}}_n$ by $T$:
\[
 |\hat{\mathcal{R}}_T(u)-\mathcal{R}_T(u)|\;\le\;T\delta^{\mathcal{R}}_n
\]
for $u\in\{u^*,u_n\}$, almost surely for all sufficiently large $n$.

Substituting this bound and \eqref{eq:6.gamma_rate}, \eqref{eq:6.energy_rate} into the chain \eqref{eq:6.upper}--\eqref{eq:6.8} of the proof of Theorem~\ref{thm:convergence}
yields, after dropping the non-negative terms,
\[
 \lambda_n\,\hat{\gamma}_{M_n,N_n}^2\bigl(\mathcal{L}(X_T^{(n)}),\mu_T\bigr)
 \;\le\; H^*_c + \varepsilon_n + \tfrac{1}{2}\delta^{\mathcal{C}}_n + cT\delta^{\mathcal{R}}_n + \lambda_n\delta^\gamma_n.
\]
The same argument as in Step~$(\mathrm{iii})$ of Section~\ref{sec:6.3rate}
(dividing by $\lambda_n$ and absorbing $\delta^{\mathcal{C}}_n/\lambda_n$ and $cT\delta^{\mathcal{R}}_n/\lambda_n$ into $\delta^\gamma_n$)
produces \eqref{eq:4.rate_mfg} with $C^*_c=17\Phi(0)+L^2+8cT\Psi(0)$.
The polynomial-schedule rate follows by direct substitution.
\qed

\subsection{Proof of Corollary~\ref{cor:kernel_convergence}}\label{sec:6.4ker}

We follow the proof of Theorem~\ref{thm:convergence} given in Section~\ref{sec:6.2}
with $\hat{\gamma}_{M_n,N_n}^2$ replaced by $\bar{\gamma}_K^2(D_{N_n})$ throughout
and $\hat{H}_{\lambda_n,M_n,N_n}$ replaced by $\hat{H}^{\mathrm{ker}}_{\lambda_n,N_n}$.
Only the input from Lemma~\ref{lem:strong_consistency} needs to be modified, and
the energy estimate Lemma~\ref{lem:energy_consistency} is used unchanged.

The decomposition \eqref{eq:6.tower} shows that
$\bar{\gamma}_K^2(D_{N_n}) = \EE[\hat{\gamma}_{M_n,N_n}^2\,|\,D_{N_n}]$ for every $M_n\ge 1$.
In particular, $\bar{\gamma}_K^2(D_{N_n})$ is a deterministic functional of the data alone
and does not involve the random Fourier features.
By the strong law of large numbers for two-sample $U$-statistics \cite[Section~5.4]{ser:1980}, we have
\[
 \bar{\gamma}_K^2(D_{N_n}) \longrightarrow \gamma_K(\mu,\nu)^2,  \quad n\to\infty, \;\;\text{a.s.},
\]
under the single condition $N_n\to\infty$.
This is exactly the first half of the proof of Lemma~\ref{lem:strong_consistency},
and the random-feature residual $B_{M_n}$ does not appear here.
Consequently, the Hoeffding--Borel--Cantelli argument used in the proof of Lemma~\ref{lem:strong_consistency}
to control $B_{M_n}$, which is the source of the rate condition $M_n/\log n\to\infty$ in Theorem~\ref{thm:convergence}, is no longer needed,
and no condition on a random-feature count is imposed in the statement.

With this substitution, the path-wise inequalities corresponding to \eqref{eq:6.LLN} in the proof of Theorem~\ref{thm:convergence} become the following:
on a measurable set of probability one, for every sufficiently large $n$ and for the relevant pairs $(\nu,\mu_1)$ used in the proof,
\[
 \bigl|\bar{\gamma}_K^2(D_{N_n}) - \gamma_K(\nu,\mu_1)^2\bigr|\le \varepsilon_n,
 \qquad
 \bar{\gamma}_K^2(D_{N_n})\bigr|_{\nu=\mu_1}\le \varepsilon_n,
\]
together with the energy bounds from Lemma~\ref{lem:energy_consistency}.
The remaining steps of the proof of Theorem~\ref{thm:convergence}, i.e., the upper bound \eqref{eq:6.upper},
the contradiction argument in Step~$(\mathrm{ii})$, and the tightness-based liminf argument in Step~$(\mathrm{iii})$,
use only these path-wise inequalities and the $\varepsilon_n$-optimality of $u_n$, and apply verbatim to the kernel $U$-statistic penalty problem~\eqref{eq:4.kernel_pen}.
\qed

\section{Conclusion}\label{sec:7}

We have proposed a computational framework for the soft-target version of
\emph{mean-field optimal transport}, namely a potential / planning MFG
where both the running interaction cost and the terminal distributional cost are expressed
through reproducing-kernel maximum mean discrepancies, and applied it to a coordinated electric-vehicle charging problem.
The framework is sample-linear (per-iteration cost $O(NM)$ in the batch size $N$ and the random-feature count $M$)
and dimension-agnostic within the per-iteration cost,
which makes it compatible with the high-dimensional Schr{\"o}dinger-bridge experiments at $d\le 100$
of Section~\ref{sec:5.3} while remaining moderate in the dimension of the MFG application reported here.
By combining the Fourier representation of the squared MMD and of the kernel self-interaction
with $U$-statistic decompositions,
we obtained unbiased estimators
$\hat\gamma_{M,N}^2$ (Theorem~\ref{thm:rfu_unbiased})
and $\hat{\mathcal{R}}_{M,N}[\hat\nu^N]$ (Theorem~\ref{thm:rfu_interaction})
of complexity $O(NM)$ for the terminal MMD penalty and the interaction cost respectively.
Both estimators have an explicit variance decomposition $O(1/M)+O(1/N)$,
which directly underlies the convergence analysis of Section~\ref{sec:4.2conv}.

The theoretical content of the paper is summarized by four results:
\begin{enumerate}[label=(\arabic*)]
\item Two unbiased $O(NM)$ estimators of the kernel costs of \eqref{eq:4.cost_decomp},
each with explicit variance decomposition $O(1/M)+O(1/N)$
(Theorems~\ref{thm:rfu_unbiased} and~\ref{thm:rfu_interaction}).
\item A sample-level almost-sure convergence theorem for the empirical potential MFG penalty problem,
coupling $\lambda_n,M_n,N_n,\varepsilon_n$ through explicit rate conditions
including the random-feature condition $M_n/\log n\to\infty$ (Theorem~\ref{thm:convergence}),
with the Schr{\"o}dinger-bridge specialization recovered as Corollary~\ref{cor:sbp_special_case}.
\item An explicit almost-sure rate of convergence of the form
$\le(H^*+\varepsilon_n)/\lambda_n+O(\sqrt{\log n/M_n}+\sqrt{\log n/N_n})$
holding for all sufficiently large $n$, for both the observable empirical penalty and the population $\gamma_K^2$,
yielding an algebraic rate $\gamma_K=O(n^{-a/2}(\log n)^{1/4})$ a.s.\
under polynomial schedules $\lambda_n=n^a$, $M_n=N_n=n^{2a}$ (Proposition~\ref{prop:rate}).
\item Recovery of the Schr{\"o}dinger-bridge problem of \cite{nak:2024sb} as the special case $\mathcal{R}\equiv 0$
(Corollary~\ref{cor:sbp_special_case}),
together with the first sample-level convergence guarantee for the empirical kernel-$U$-statistic implementation of that method
(Corollary~\ref{cor:kernel_convergence}).
\end{enumerate}
Numerical experiments demonstrated the framework on two fronts:
the Schr{\"o}dinger-bridge specialization ($\mathcal{R}\equiv 0$) in dimensions up to $d=100$
with Gaussian shift and bimodal targets (Sections~\ref{sec:5.2}--\ref{sec:5.3})
and the supporting convergence diagnostics and computational-scaling experiments
(Sections~\ref{sec:conv_ver}, \ref{sec:5.5comp});
and the EV charging fleet potential MFG with per-vehicle physical heterogeneity ($c>0$) in Section~\ref{sec:5ev},
where the aggregate-demand congestion cost $\mathcal{R}^{(D)}[\nu]=(\int \eta(h)u^{\!*}(s)d\nu)^2$
reduces peak and time-averaged aggregate demand
while keeping the deadline SOC mean close to the target;
the convergence guarantee in this non-kernel-cost setting follows from Proposition~\ref{prop:convergence_general}.

\paragraph{Unbiasedness in practice and in theory.}
The biased RFF $V$-statistic and the unbiased RF $U$-statistic terminal penalty produce empirically indistinguishable trained drifts across the configurations of Section~\ref{sec:5.1.3vs}.
We nevertheless adopt the unbiased construction for theoretical reasons:
it eliminates the $O(\Phi(0)/N)$ population-level penalty floor and yields the clean variance decomposition $O(1/M)+O(1/N)$
on which Proposition~\ref{prop:rate} relies.

\paragraph{Future directions.}
Several directions are worth pursuing.
First, the EV demonstration uses a single physical heterogeneity coordinate;
enriching $h$ with additional vehicle-level parameters
(battery capacity, plug-out time uncertainty, comfort preference)
under $h$-dependent dynamics and a domain-justified distribution is immediate,
since the $O(N)$ and $O(NM)$ scalings of our estimators carry over.
Second, richer price-feedback models (piecewise-linear price impact, grid capacity constraints)
fit within Proposition~\ref{prop:convergence_general}
through additional terms in $\mathcal{R}[\nu_t]$ or constraints on the admissible drift class.
Third, the adaptive bandwidth scaling $\alpha=1/d$ could be replaced by deep kernel learning
where the kernel operates in a learned feature space of lower effective dimension,
which would be useful for scaling to higher heterogeneity dimensions.
Fourth, other smart-grid problems (distributed battery storage, demand response, day-ahead market coordination)
and non-energy MFGs (crowd dynamics, opinion formation, multi-agent reinforcement learning)
share the structural form of \eqref{eq:1.mfg_intro}--\eqref{eq:1.kernel_costs}
and admit the same treatment.

\subsection*{Acknowledgements}

This study is supported by JSPS KAKENHI Grant Numbers JP24K06861 and JP26H02001.

\subsection*{Use of AI assistants}
The author used a large language model (Anthropic Claude) as a
coding and writing assistant during the preparation of this work:
specifically, for refactoring and commenting the Python implementation,
generating baseline-plotting boilerplate, and copy-editing the
manuscript for grammar and clarity.  All mathematical results,
experimental designs, and numerical conclusions were produced and
verified by the author, who takes full responsibility for the
content of this paper.

\bibliographystyle{spmpsci}
\bibliography{../mybib}

\end{document}